\newlength\FHoffset
\tikzset{
  symbol/.style={
    draw=none,
    every to/.append style={
      edge node={node [sloped, allow upside down, auto=false]{$#1$}}}
  }
}
\pgfplotsset{compat=1.15}
\useunder{\uline}{\ul}{}
\numberwithin{equation}{section}
\newcommand{\R}{\mathbb{R}}
\newcommand{\N}{\mathbb{N}}
\newcommand{\C}{\mathbb{C}}
\newcommand{\Z}{\mathbb{Z}}
\definecolor{mygreen}{RGB}{28,172,0} % color values Red, Green, Blue
\definecolor{mylilas}{RGB}{170,55,241}
\edef\svtheparindent{\the\parindent}
\theparentequation\alph{equation}}% <search>
\newtheorem{theorem}{Theorem}[section]
\newtheorem{lemma}[theorem]{Lemma}
\newtheorem{proposition}[theorem]{Proposition}
\newtheorem{definition}[theorem]{Definition}
\newtheorem{remark}[theorem]{Remark}
\title{Non-negative polynomials without\\ hyperbolic certificates of non-negativity}
\author{H.L. Brian Ng\thanks{Department of Electrical and Computer Systems Engineering, Monash University, Clayton VIC 3800, Australia. \texttt{hin.ng@monash.edu}} \and James Saunderson\thanks{Department of Electrical and Computer Systems Engineering, Monash University, Clayton VIC 3800, Australia. \texttt{james.saunderson@monash.edu}}}
\begin{document}
\maketitle

\begin{abstract}
In this paper we study the relationship between the set of all non-negative multivariate homogeneous polynomials and those, which we call hyperwrons, whose non-negativity can be deduced from an identity involving the Wronskians of hyperbolic polynomials. We give a sufficient condition on positive integers $m$ and $2y$  such that there are  non-negative polynomials of degree $2y$ in $m$ variables that are not hyperwrons. Furthermore, we give an explicit example of a non-negative quartic form that is not a sum of hyperwrons. We partially extend our results to hyperzouts, which are polynomials whose non-negativity can be deduced from an identity involving the B\'ezoutians of hyperbolic polynomials.

\end{abstract}

\tableofcontents
\section{Introduction}
\label{section: introduction}

The problem of deciding whether a multivariate polynomial with real coefficients is non-negative is a central question in computational real algebraic geometry. The development of algorithms to certify polynomial non-negativity, and their application to polynomial optimisation, has led to new computational methods in areas such as control and dynamical systems~\cite{2005PositiveControl,Papachristodoulou2002OnDecomposition,Prajna2004StochasticCertificates}, fluid mechanics~\cite{Chernyshenko2014PolynomialAhead,Fantuzzi2022TheComputations}, game theory~\cite{Parrilo2006PolynomialOptimization} and quantum information~\cite{Doherty2004CompleteCriteria}. One way to show that a polynomial is non-negative is to write it as a sum of squares (SOS) of other polynomials. This construction immediately guarantees the non-negativity of the resulting polynomial. This is a useful sufficient condition because the problem of deciding whether a polynomial is a sum of squares can be reduced to a semidefinite programming feasibility problem~\cite{Lasserre2000GlobalMoments,Parrilo2003SemidefiniteProblems,Nesterov2000SquaredProblems,Shor1987ClassFunctions}. 

However, not all non-negative polynomials can be expressed as sum of squares of polynomials, a result due to Hilbert~\cite{Hilbert1888UeberFormenquadraten}. There are numerous ways to build more expressive families of non-negative polynomials that can be searched over via convex optimization (see Section~\ref{sec:related} for further discussion). Among these, one approach involves constructing families of non-negative polynomials out of hyperbolic polynomials, which are multivariate polynomials with real coefficients and certain real-rootedness properties. (See Section~\ref{section: hyperbolic polynomials} for a formal definition.) 

If $p$ is a hyperbolic polynomial, then associated with $p$ is a convex cone, called a hyperbolicity cone. The simplest construction of non-negative polynomials from hyperbolic polynomials is as follows. Given a hyperbolic polynomial $p$ and points $u,v$ in the associated hyperbolicity cone, the Wronskian $q(x) = D_up(x)D_vp(x) - p(x)D_{uv}^2p(x)$ is a non-negative polynomial. (Here $D_ap$ denotes the directional derivative of $p$ in the direction $a$.) Higher degree polynomials can be obtained by composition with a polynomial map $\phi$, giving non-negative polynomials of the form
\begin{equation}\label{eq:wron} D_up(\phi(x))D_vp(\phi(x)) - p(\phi(x))D_{uv}^2p(\phi(x)).\end{equation}
This construction (from~\cite{Saunderson2019CertifyingOptimization}) is discussed in more detail in Section~\ref{sec: Hyperbolic-Wronskian certificates}. 
If we fix $p$ and $u$ and $\phi$, then the problem of deciding whether a given polynomial can be expressed in the form~\eqref{eq:wron} for some $v$ in the hyperbolicity cone can be solved via hyperbolic programming, a generalization of semidefinite programming~\cite{Guler1997HyperbolicProgramming}. If a polynomial can be expressed in the form~\eqref{eq:wron} then we say that it has a \emph{hyperbolic-Wronskian certificate of non-negativity}. For brevity, in this paper we use the term \emph{hyperwron} to refer to any polynomial that has a hyperbolic-Wronskian certificate of non-negativity.

\subsection{Our contributions}

This paper is focused on understanding how the collection of hyperwrons in $m$ variables of degree $2y$, as well as certain larger families of non-negative polynomials, are related to sums of squares on the one hand, and all non-negative polynomials on the other hand. 

 If a homogeneous polynomial is a hyperwron with respect to a quadratic hyperbolic polynomial then it is the composition of the non-negative quadratic form $D_up(x)D_vp(x) - p(x)D_{uv}^2p(x)$ and a polynomial map $\phi$. The result is a sum of squares. The converse, that all sums of squares have hyperbolic-Wronskian certificates of non-negativity with respect to a degree two hyperbolic polynomial, also holds (see~\cite[Remark 1]{Shu2024AlgebraicOptimization}). We discuss this in more detail in Section~\ref{section: Relationship between hyperbolic certificates and Sum of squares certificates}. This shows that hyperwrons coincide with non-negative polynomials whenever sums of squares coincide with non-negative polynomials.
 
 Our first main result, Theorem~\ref{theorem: m_y no Wronskian}, gives a sufficient condition on the degrees $2y$, and numbers of variables $m$, for which there are non-negative homogeneous polynomials that are not hyperwrons. We show that such polynomials exist by bounding the dimension of hyperwrons that are not sums of squares, and comparing it with the dimension of all non-negative homogeneous polynomials that are not sums of squares.
\begin{theorem}
\label{theorem: m_y no Wronskian}
If $m,y$ are positive integers such that
\begin{itemize}
\item $m=4$ and $y\geq 4$ or 
\item $m= 5$ and $y\geq 3$ or
    \item $m\geq 6$ and $y\geq 2$,
\end{itemize}
then there exists a non-negative homogeneous polynomial in $m$ variables of degree $2y$ that is not a hyperwron.

\end{theorem}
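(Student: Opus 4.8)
The plan is a dimension count. Write $V=V_{m,2y}$ for the $N:=\binom{m+2y-1}{2y}$-dimensional space of degree-$2y$ forms in $m$ variables, and let $\Sigma\subseteq H\subseteq P$ be, respectively, the cones of sums of squares, of hyperwrons, and of non-negative forms in $V$. Under each of the three hypotheses the pair $(m,2y)$ does not appear in Hilbert's classification, so $\Sigma\subsetneq P$; since $\Sigma$ is closed, $P\setminus\Sigma$ is a non-empty subset of $P$ that is open in $P$, hence $\dim(P\setminus\Sigma)=N$. Consequently it suffices to prove the strict inequality $\dim(H\setminus\Sigma)<N$: then $P\setminus\Sigma$ cannot be contained in $H$, and any form in $(P\setminus\Sigma)\setminus H$ is non-negative and not a hyperwron.

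To bound $\dim(H\setminus\Sigma)$ I would first invoke the fact recalled in Section~\ref{section: Relationship between hyperbolic certificates and Sum of squares certificates} that a hyperwron with respect to a quadratic hyperbolic polynomial is a sum of squares; hence every $q\in H\setminus\Sigma$ has the form $q(x)=W_{u,v}p(\phi(x))$ with $W_{u,v}p:=D_up\,D_vp-p\,D^2_{uv}p$, where $p$ is a hyperbolic polynomial of degree $d\geq 3$ in some number $n$ of variables, $u,v$ lie in its hyperbolicity cone, and $\phi\colon\R^m\to\R^n$ has components that are forms of degree $k$ with $(d-1)k=y$. In particular $d-1$ is a divisor of $y$ that is at least $2$, so $k\leq y/2$, and there are only finitely many admissible pairs $(d,k)$. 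The next, and more delicate, step is to reduce $n$ to a quantity bounded in terms of $m$ and $k$: factoring $\phi$ through the degree-$k$ Veronese map $v_k\colon\R^m\to\R^{\binom{m+k-1}{k}}$ as $\phi=C\circ v_k$ with $C$ linear, enlarging $C$ so that its image contains $u$ and $v$, and using the standard fact that the restriction of a hyperbolic polynomial to a subspace meeting the interior of its hyperbolicity cone is again hyperbolic, one may take $n\leq\binom{m+k-1}{k}+2$.

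With $n$ bounded, the forms $q$ arising from a fixed admissible triple $(d,n,k)$ are the image of a parameter space of dimension $\binom{n+d-1}{d}+2n+n\binom{m+k-1}{k}$, on which the $\mathrm{GL}_n$-action $(p,u,v,\phi)\mapsto(p\circ g^{-1},gu,gv,g\circ\phi)$, together with the scaling $(p,u,v)\mapsto(\lambda p,\mu u,(\lambda^2\mu)^{-1}v)$, leaves the output $W_{u,v}p\circ\phi$ unchanged; subtracting the dimension of a generic orbit gives an explicit upper bound for the dimension of this image, and summing over the finitely many admissible triples yields a bound $B(m,y)$ for $\dim(H\setminus\Sigma)$. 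The theorem then reduces to the arithmetic inequality $B(m,y)<\binom{m+2y-1}{2y}$ for $(m,y)$ in the three families, which I would establish by checking the base cases $(4,4)$, $(5,3)$, $(6,2)$ and a few neighbours by direct computation of binomial coefficients, and then inducting on $y$, comparing the polynomial growth of $B$ with the faster growth of $\binom{m+2y-1}{2y}$.

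I expect the main obstacle to be precisely this last inequality, and in particular the requirement that the dimension bound on $H\setminus\Sigma$ be genuinely below $N$: a first-pass parameter count is close to $N$, and taken with the Veronese-sized value of $n$ it can even overshoot, so the bookkeeping must be tightened — for example by bounding $n$ by the minimal number of variables actually needed rather than the Gram-style bound, or by exploiting that $W_{u,v}p\circ\phi$ depends on $W_{u,v}p$ only modulo the ideal of the Zariski closure of $\phi(\R^m)$, so that many coefficients of $p$ are irrelevant, or by using finer redundancies in the parametrization. It is exactly because this margin is narrow that the statement is confined to $2y$ large relative to $m$, leaving open the small cases such as quaternary sextics and quinary quartics.
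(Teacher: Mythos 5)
Your high-level plan — a dimension count comparing the full-dimensional set $P_{m,2y}\setminus\Sigma_{m,2y}$ with the non-sum-of-squares part of the hyperwrons, reducing to hyperbolic polynomials of degree $d\geq 3$, and noting there are only finitely many admissible pairs $(d,k)$ with $(d-1)k=y$ — is exactly the strategy of the paper. However, the heart of your argument, the way you propose to actually bound the dimension, contains a genuine gap: you parameterize by $(p,u,v,\phi)$ after bounding $n$ by a Veronese-style count $n\leq\binom{m+k-1}{k}+2$, and then hope to salvage the count by subtracting a $\mathrm{GL}_n$-orbit dimension. This does not close. For $(m,y)=(4,4)$ with $(d,k)=(3,2)$, even the tighter choice $n=\binom{m+k-1}{k}=10$ gives a parameter space of dimension $\binom{12}{3}+2\cdot10+10\cdot10=340$; subtracting $n^2=100$ for $\mathrm{GL}_n$ and a couple more for scalings leaves about $238$, well above the target $N=\binom{11}{3}=165$. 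You anticipate this ("can even overshoot") but treat it as a bookkeeping nuisance, whereas it is in fact fatal to the approach as stated: no amount of orbit subtraction on that parameter space rescues the base cases that define the boundary of the theorem.

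The fix that the paper actually uses — and which you gesture at only as one optional refinement ("many coefficients of $p$ are irrelevant") without carrying it out — is not to bound $n$ at all, but to observe that the map $\Theta(p,u,v,\phi)=(D_up\,D_vp-p\,D^2_{uv}p)\circ\phi$ factors through the much smaller space of quadruples $(p_1,p_2,p_3,p_4)=(D_up\circ\phi,\,D_vp\circ\phi,\,p\circ\phi,\,D^2_{uv}p\circ\phi)\in F_{m,(d-1)k}^2\times F_{m,dk}\times F_{m,(d-2)k}$, since the output is simply $p_1p_2-p_3p_4$. The $p_i$ are forms in the $m$ original variables, so this factorization gives a bound that is entirely independent of $n$, with no need for Veronese tricks, $\mathrm{GL}_n$-quotients, or minimal-embedding arguments. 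With that bound, the required inequality $\binom{m+2y-1}{m-1}>2\binom{m+y-1}{m-1}+\binom{m+dk-1}{m-1}+\binom{m+(d-2)k-1}{m-1}$ holds (with little to spare — $164<165$ in the quaternary-octic case) and one finishes with an elementary monotonicity argument in $m$ after handling $m=4,5$ explicitly, much as you propose for the arithmetic at the end. So the overall architecture of your proposal is right, but the essential lemma that makes the dimension count actually work is missing, and your main proposed substitute for it demonstrably fails on the base cases.
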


Theorem~\ref{theorem: m_y no Wronskian} tells us that there are non-negative polynomials that are not hyperwrons. However, it is not clear whether the set of hyperwrons is closed under addition. In fact, we conjecture that it is not closed under addition. Therefore, given some fixed (even) degree and number of variables, it is natural to consider the conic hull of the set of hyperwrons, i.e., the larger set of \emph{sums of hyperwrons}. One can then ask whether there exist non-negative polynomials that are not sums of hyperwrons. 

As a partial answer to this question, we give an explicit example of a non-negative homogeneous quartic polynomial that is not a sum of hyperwrons. We do this by finding a non-negative homogeneous quartic that is simultaneously not a hyperwron (see Theorem~\ref{theorem: example polynomial is not a hyperwron}) and that is also extremal in the cone of non-negative quartics (Proposition~\ref{proposition: example polynomial is an extreme ray}). The example is most concisely expressed in terms of quaternions. In the statement below, if $x$ is a quaternion then  $x^*$ denotes its conjugate and $|x|^2 = xx^* = x^*x$ denotes its squared magnitude.
\begin{theorem}
\label{thm:main-sum-hyperwrons}
    Let $x,y,z,w$ be quaternion-valued indeterminates. The real-valued quartic homogeneous polynomial, $(|x|^2+|y|^2)(|z|^2+|w|^2)-|xz^*+yw^*|^2$,  in $16$ real variables, is not a sum of hyperwrons.

\end{theorem}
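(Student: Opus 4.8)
The plan is to combine two separate facts about the quartic
\[
q(x,y,z,w) = (|x|^2+|y|^2)(|z|^2+|w|^2) - |xz^* + yw^*|^2,
\]
viewed as a real homogeneous quartic in the $16$ real coordinates of the quaternions $x,y,z,w$. First I would establish that $q$ is non-negative: writing it in quaternionic notation, $q$ is (up to the usual identification) the $2\times 2$ version of a Cauchy--Schwarz / Lagrange-type identity, and expanding $|xz^*+yw^*|^2 = |xz^*|^2 + |yw^*|^2 + 2\,\mathrm{Re}(xz^*(yw^*)^*)$ together with $|xz^*|^2=|x|^2|z|^2$ shows $q = |x|^2|w|^2 + |y|^2|z|^2 - 2\,\mathrm{Re}(xz^*w y^*) = |xw^* - yz^*|^2 \ge 0$ after the correct bookkeeping (this is exactly the quaternionic Lagrange identity). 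So $q$ is a non-negative quartic, in fact a single square of a ``quaternion-bilinear'' form, hence also a sum of squares of real quadratics.

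Second, and this is where the two cited results do the work: by Theorem~\ref{theorem: example polynomial is not a hyperwron}, $q$ is not itself a hyperwron, and by Proposition~\ref{proposition: example polynomial is an extreme ray}, $q$ spans an extreme ray of the cone $P_{16,4}$ of non-negative quartics in $16$ variables. The argument then runs as follows. Suppose, for contradiction, that $q = \sum_{i=1}^{N} h_i$ is a sum of hyperwrons $h_i$. Each $h_i$ is non-negative (that is the whole point of a hyperbolic-Wronskian certificate), so each $h_i \in P_{16,4}$. Since $q$ generates an extreme ray of the convex cone $P_{16,4}$, any decomposition of $q$ into a sum of elements of $P_{16,4}$ must have every summand a non-negative scalar multiple of $q$: that is, $h_i = \lambda_i q$ with $\lambda_i \ge 0$ and $\sum_i \lambda_i = 1$. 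In particular some $\lambda_i > 0$, and since the set of hyperwrons is clearly closed under multiplication by non-negative scalars (scaling $v$ or scaling $\phi$, or simply scaling the whole expression~\eqref{eq:wron}), $q = (1/\lambda_i) h_i$ is itself a hyperwron. This contradicts Theorem~\ref{theorem: example polynomial is not a hyperwron}, completing the proof.

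The only genuinely delicate point is making the extreme-ray argument airtight: I must confirm that a hyperwron is always a non-negative polynomial of the same degree and in the same variables as $q$ (degree $4$, $16$ variables), so that it actually lies in $P_{16,4}$ and the extremality of $q$ applies. This is immediate from the construction in Section~\ref{sec: Hyperbolic-Wronskian certificates}: expression~\eqref{eq:wron} is manifestly non-negative, and the number of variables and the degree are whatever we declare them to be when we fix the polynomial map $\phi$; when $q$ is a sum of hyperwrons, all summands are by definition quartics in the same $16$ variables. I would also note explicitly the trivial closure of the hyperwron set under non-negative scaling, since the contradiction hinges on it.

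I expect the main obstacle to be entirely bookkeeping rather than conceptual: verifying the non-negativity (and the precise Lagrange-type rewriting) of $q$ in the noncommutative quaternionic setting, where one must be careful about the order of factors and about which real quadratic forms the quaternionic expressions unfold to. Once $q \in P_{16,4}$ and $q$ is extremal there (Proposition~\ref{proposition: example polynomial is an extreme ray}) and $q$ is not a hyperwron (Theorem~\ref{theorem: example polynomial is not a hyperwron}), the deduction that $q$ is not a sum of hyperwrons is a two-line convexity argument as above.
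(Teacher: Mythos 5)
Your main argument is correct and is essentially the paper's proof: extremality (Proposition~\ref{proposition: example polynomial is an extreme ray}) forces each nonnegative summand of a putative hyperwron decomposition to be a nonnegative multiple of $\hat{f}$, and closure of the hyperwron set under nonnegative scaling then makes $\hat{f}$ a hyperwron, contradicting Theorem~\ref{theorem: example polynomial is not a hyperwron}. You are also right to make explicit the nonnegative-scaling closure, which the paper passes over silently.

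However, your preliminary claim that $q=|xw^*-yz^*|^2$, hence ``a single square of a quaternion-bilinear form, hence also a sum of squares,'' is a genuine error. The Lagrange identity relies on commutativity and fails over $\mathbb{H}$. Expanding carefully, $q = |x|^2|w|^2 + |y|^2|z|^2 - 2\,\textup{Re}(xz^*wy^*)$, while $|xw^*-yz^*|^2 = |x|^2|w|^2 + |y|^2|z|^2 - 2\,\textup{Re}(xw^*zy^*)$; since $z^*w\neq w^*z$ in general, these differ (try $x=1,y=i,z=j,w=k$: $q=4$ but $|xw^*-yz^*|^2=0$). Indeed $q$ is \emph{not} a sum of squares, by Proposition 6.1 of Ge--Tang as cited in the paper; had it been, Proposition~\ref{proposition: SOS is subset of Wronskian} would make it a hyperwron, directly contradicting Theorem~\ref{theorem: example polynomial is not a hyperwron} that you invoke. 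The error is confined to your preamble---nonnegativity is best obtained by noting $\hat{f}={\det}_M(XX^*)$ with $XX^*$ positive semidefinite, or by citing Ge--Tang---and the rest of your argument stands.
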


 Another generalization of hyperwrons can be obtained by using B\'ezoutian matrices instead of Wronskians in the initial construction. Indeed if $p$ is a hyperbolic polynomial of degree $d$ in $n$ variables, and $u,v$ are elements of the associated hyperbolicity cone, then a certain $d\times d$ parameterized B\'ezoutian matrix $B_{p,u,v}(x)$ related to $p$, $u$, and $v$, is positive semidefinite for all $x\in \R^n$. (The details of this construction are reviewed in Section~\ref{sec: Hyperbolic-Bezoutian certificates}.) Therefore, any scalar polynomial of the form
\begin{equation}\label{eq:bezoutian} \xi(x)^\intercal B_{p,u,v}(\phi(x)) \xi(x),\end{equation}
where $\phi$ and $\xi$ are suitable polynomial mappings, is non-negative. Again, if $p$ and $u$ and $\phi$ and $\xi$ are fixed, the problem of deciding whether a given polynomial can be expressed in the form~\eqref{eq:bezoutian} can be solved via hyperbolic programming~\cite{Saunderson2019CertifyingOptimization}. If a polynomial can be expressed in the form~\eqref{eq:bezoutian}, then we say that it has a \emph{hyperbolic-B\'ezoutian certificate of non-negativity}. For brevity, we use the term \emph{hyperzout} to refer to any polynomial that has a hyperbolic-B\'ezoutian certificate of non-negativity.

One key challenge with working with hyperzouts, rather than hyperwrons, is related to the degree of the hyperbolic polynomials that can arise in the associated certificates of non-negativity. It may be possible to express a hyperwron $q$ in the form~\eqref{eq:wron} for many different hyperbolic polynomials $p$, points $u,v$, and maps $\phi$. However, the degrees of $p$, $\phi$ and $q$ satisfy $\deg(q) = 2(\deg(p)-1)\deg(\phi)$, constraining the possible degrees of $p$ and $\phi$ in terms of the degree of $q$. Similarly, a hyperzout $q$ can be expressed in the form~\eqref{eq:bezoutian} in many different ways. The $(i,j)$ entry (for $0\leq i,j\leq d-1$) of the B\'ezoutian $B_{p,u,v}(x)$ appearing in~\eqref{eq:bezoutian} has
degree $2(d-1)-(i+j)$. Therefore, if the map $\xi$ only extracts the low-degree part of the matrix $B_{p,u,v}(x)$, it is possible for a hyperzout $q$ to have a representation of the form~\eqref{eq:bezoutian} where the degree of the hyperbolic polynomial $p$ is not bounded in terms of the degree of $q$. In order to generalize Theorem~\ref{theorem: m_y no Wronskian} to the hyperzout setting, we will focus on hyperzouts $q$ where the degree of the hyperbolic polynomial $p$ and the map $\phi$ involved in the certificate~\eqref{eq:bezoutian} satisfy $\deg(p)\deg(\phi) < \deg(q)$ (see Definition~\ref{definition: Bezoutian certificates} for a more precise statement). Hyperzouts satisfying the degree restriction from Definition~\ref{definition: Bezoutian certificates} are called \emph{degree-restricted hyperzouts} throughout this work.

Our main result related to degree restricted hyperzouts is Theorem~\ref{theorem: m_y no Bezoutian}. This result implies that there exist non-negative homogeneous polynomials of degree $2y\geq 4$ in $m$ variables that are not degree-restricted hyperzouts as long as $m$ is sufficiently large (for fixed $y$).

\subsection{Related Work}
\label{sec:related}

A number of families of non-negative homogeneous polynomials (also known as forms) have been studied extensively in recent years, often motivated by applications in polynomial optimization. Among these are sums of squares, sums of non-negative circuit polynomials~\cite{Iliman2016LowerProgramming,Iliman2016AmoebasCircuits} (and closely related agiforms~\cite{Reznick1989FormsInequality} and sums of AM-GM exponential polynomials~\cite{Chandrasekaran2014RelativeOptimization}), and (scaled) diagonally dominant sums of squares~\cite{Ahmadi2019DSOSOptimization}. An ongoing line of research is concerned with the relationships between these different families of non-negative forms. Hilbert's celebrated theorem~\cite{Hilbert1888UeberFormenquadraten} characterizes the degrees and numbers of variables for which non-negative forms are sums of squares. More recent refinements of this result characterize the varieties for which non-negative quadratic forms on the variety are sums of squares~\cite{Blekherman2017DoResolutions}. Sums of squares and sums of non-negative circuit polynomials are incomparable, with neither set containing the other in general~\cite{Dressler2021RealPolynomials}. With respect to the monomial basis, scaled-diagonally dominant sums of squares can be interpreted as sums of binomials squared (studied earlier in, e.g.,~\cite{Reznick1989FormsInequality}), which are sums of non-negative circuit polynomials, and so all scaled diagonally dominant sums of squares are sums of non-negative circuit polynomials. 

There are other natural properties of forms that imply non-negativity, such as being convex. Since convex forms are non-negative, in the Hilbert cases it follows that every convex form is a sum of squares. Remarkably, it also holds that convex quaternary quartic forms are always sums of squares~\cite{ElKhadir2020OnInequalities}. However, convex forms of degree at least four and with a sufficiently large number of variables are not necessarily sums of squares~\cite{Blekherman2012ChapterSquares}, with the first explicit example of such a form appearing in~\cite{Saunderson2023ASquares}. It remains an open problem to characterize the degrees and numbers of variables for which convex forms are sums of squares.

Previous work studying hyperbolic certificates of non-negativity has mostly focused on relating hyperbolic certificates of non-negativity to sums of squares. A hyperbolic polynomial $p$ is said to be weakly SOS-hyperbolic if every Wronskian of the form~\eqref{eq:wron} is a sum of squares. A hyperbolic polynomial is said to be SOS-hyperbolic if every B\'ezoutian of the form~\eqref{eq:bezoutian} is a matrix sum of squares. 

If $p$ is weakly SOS-hyperbolic, then any hyperwron formed from $p$ is a sum of squares. Similarly if $p$ is SOS-hyperbolic then any hyperzout formed from $p$ is a sum of squares. 

It is known (see~\cite{Saunderson2019CertifyingOptimization} and~\cite[Theorem 6.3]{Blekherman2023LinearContainment}) that there are hyperbolic polynomials of degree $d$ in $n$ variables that are not weakly SOS-hyperbolic whenever $n\geq 4$ and $d\geq 4$ or $n\geq 6$ and $d\geq 3$. Conversely if $n=3$ or $d=2$ or $(n,d) = (4,3)$, every hyperbolic polynomial of degree $d$ in $n$ variables is SOS-hyperbolic. It is not known whether hyperbolic polynomials of degree $3$ in $5$ variables are (weakly) SOS-hyperbolic. In the other direction, all sums of squares are hyperwrons (see Section~\ref{section: Relationship between hyperbolic certificates and Sum of squares certificates}).

However, in contrast to previous work, the main focus of this paper is on developing our understanding of the relationship between polynomials with hyperbolic certificates of non-negativity and all non-negative polynomials.

\subsection{Outline}

The rest of this paper is organized as follows. In Section~\ref{section: Preliminaries}, we recall some basic facts on semi-algebraic sets, hyperbolic polynomials and sums of squares. In Sections~\ref{sec: Hyperbolic-Wronskian certificates} and~\ref{sec: Hyperbolic-Bezoutian certificates}, respectively, we summarize the basic constructions of non-negative polynomials from Wronskians and B\'ezoutians of hyperbolic polynomials. In Section~\ref{section: Relationship between hyperbolic certificates and Sum of squares certificates} we establish new results on the structure of hyperbolic certificates of non-negativity for sums of squares. Section~\ref{section: Dimension Analysis} establishes Theorems~\ref{theorem: m_y no Wronskian} and~\ref{theorem: m_y no Bezoutian} showing that there exist non-negative polynomials that are not hyperwrons, and degree-restricted hyperzouts, respectively. Section~\ref{section: An example non-negative quartic homogeneous polynomial that is not hyperwron} gives an explicit example of a non-negative polynomial that is not a sum of hyperwrons. The paper concludes with a discussion of related open questions in Section~\ref{section: Discussions}. 

\section{Preliminaries}
\label{section: Preliminaries}
We begin by introducing some basic notation and terminology. Let $\langle a,b \rangle$ denote the inner product of $a$ and $b$ and $\|\cdot\|$ the $L_2$-norm. Denote by $F_{m,2y}$ the set of homogeneous polynomials with coefficients in $\R$ of degree $2y$ in $m$ real variables. Let $P_{m,2y}\subseteq F_{m,2y}$ be the convex cone of non-negative homogeneous polynomials in $m$ variables and degree $2y$, i.e., 
    \[P_{m,2y}=\left \{p \in F_{m,2y}\;:\;  p(x)\geq 0 \; \text{ for all } x\in \R^m \right \}.\]
Let $\Sigma_{m,2y}\subseteq F_{m,2y}$ denote the convex cone of  homogeneous polynomials that are sums of squares, i.e., 
\[\Sigma_{m,2y}=\left\{p\in F_{m,2y}\;:\; p=\sum_i q_i^2 \text{ for some } q_1,q_2 \ldots \in F_{m,y}\right \}.\] Since any sum of squares is non-negative, $\Sigma_{m,2y}\subseteq P_{m,2y}$.

\subsection{Semi-algebraic sets and functions}
Next, we summarize basic facts about semialgebraic sets and semialgebraic functions that we use throughout the paper.
\begin{definition}[Bochnak {\cite[Definition 2.1.4]{Bochnak1998RealGeometry}}]
\label{definition: semi-algebraic set}
A \emph{semi-algebraic} subset of $\R^n$ is a subset of the form
\[\bigcup_{i=1}^s \bigcap_{j=1}^{r_i} \left \{x\in \R^n:f_{i,j} *_{i,j} 0 \text{ for $i=1,\ldots,s$ and $j=1,\ldots, r_i$ } \right \},\]
where $f_{i,j}$ is in the polynomial ring $\R[x_1,x_2,\dots,x_n]$ and $*_{i,j}$ denotes either $<$ or $=$.
\end{definition}
\begin{definition}[Bochnak {\cite[Definition 2.2.5]{Bochnak1998RealGeometry}}]
\label{definition: semi-algebraic function}
Let $A\subseteq \R^m$ and $B\subseteq \R^n$ be semi-algebraic sets. A mapping $f:A\rightarrow B$ is \emph{semi-algebraic} if its graph, $\left\{(x,f(x))\in \R^{m+n}: x\in A\right\}$ is a semi-algebraic subset of $\R^{m+n}$.
\end{definition}
Non-negative polynomials $P_{m,2y}$, sums of squares $\Sigma_{m,2y}$, and their set difference $P_{m,2y}\setminus \Sigma_{m,2y}$, are all semialgebraic subsets of $F_{m,2y}$. We briefly establish these well-known facts, next. 
\begin{lemma}
\label{lemma: Pm2y semi-algebraic}
The set $P_{m,2y}$ is a semi-algebraic subset of $F_{m,2y}$.
\end{lemma}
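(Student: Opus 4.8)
The plan is to show that $P_{m,2y}$ is semi-algebraic by exhibiting it as the image of a semi-algebraic set under a semi-algebraic projection, and then invoking the Tarski--Seidenberg theorem (projections of semi-algebraic sets are semi-algebraic, see \cite[Theorem 2.2.1]{Bochnak1998RealGeometry}). Concretely, identify $F_{m,2y}$ with $\R^N$ where $N = \binom{m+2y-1}{2y}$ is the number of monomials of degree $2y$ in $m$ variables, so that a form $p$ is represented by its coefficient vector $c \in \R^N$. The key observation is that non-negativity can be phrased with a universally quantified variable: $p \in P_{m,2y}$ if and only if $p(x) \geq 0$ for all $x \in \R^m$.

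First I would write down the set
\[
S = \left\{(c,x) \in \R^N \times \R^m \;:\; p_c(x) < 0 \right\},
\]
where $p_c$ denotes the form with coefficient vector $c$. Since $p_c(x)$ is a polynomial in the combined variables $(c,x)$ (each monomial in $x$ is multiplied by the corresponding coordinate of $c$), the set $S$ is cut out by a single strict polynomial inequality, hence semi-algebraic by Definition~\ref{definition: semi-algebraic set}. By Tarski--Seidenberg, the projection $\pi(S) \subseteq \R^N$ onto the $c$-coordinates is semi-algebraic. Then I would observe that $\pi(S) = \{c : \exists x,\ p_c(x) < 0\}$ is exactly the complement (within $F_{m,2y}\cong\R^N$) of the coefficient vectors of non-negative forms, i.e., $P_{m,2y} = \R^N \setminus \pi(S)$. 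Since semi-algebraic sets are closed under complementation (immediate from the Boolean structure in Definition~\ref{definition: semi-algebraic set}, or \cite[Proposition 2.1.?]{Bochnak1998RealGeometry}), $P_{m,2y}$ is semi-algebraic.

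There is essentially no hard part here: the only thing to be slightly careful about is the bookkeeping in identifying $F_{m,2d}$ with a Euclidean space and checking that $p_c(x)$ is jointly polynomial in $(c,x)$, which is routine once one fixes a monomial basis. (One should also note the harmless typo in the statement: the ambient space is written as $F_{m,2d}$ but the natural reading, consistent with the rest of the paper, is that $P_{m,2y}$ sits inside $F_{m,2y}$.) An alternative, even more elementary route avoids Tarski--Seidenberg entirely: a homogeneous form $p$ is non-negative on $\R^m$ if and only if it is non-negative on the unit sphere, and by compactness this is a closed condition; but to get the semi-algebraic description directly one still wants the quantifier-elimination argument above, so I would present the projection proof as the clean one.
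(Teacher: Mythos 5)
Your proof is correct and is essentially identical to the paper's: both define the semi-algebraic set of pairs $(c,x)$ (the paper's $Q$, your $S$) where the form is negative, project onto the coefficient space via Tarski--Seidenberg \cite[Theorem 2.2.1]{Bochnak1998RealGeometry}, and take the complement. Your observation about the typo ($F_{m,2d}$ should read $F_{m,2y}$) is also correct.
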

\begin{proof}
Let $A_{m,2y}$ denote the collection of non-negative integer vectors of length $m$ with $L^1$-Norm of $2y$, namely \[A_{m,2y}=\left\{a\in \Z_{\geq0}^m \; : \; \sum_{i=1}^ma_i=2y \right \}.\] These are all the possible exponents of forms of degree $2y$ in $m$ variables. Let \[ Q=\left \{(a,x)\in \R^{|A_{m,2y}|}\times \R^m:\sum_{I\in A_{m,2y}} a_I x^I<0 \right\}.\] This is a semi-algebraic set by Definition~\ref{definition: semi-algebraic set}. The projection of $Q$ onto $\R^{|A_{m,2y}|}$, namely \[\Pi(Q)=\left\{a \in  \R^{|A_{m,2y}|}:\exists x \in \R^m \text{ such that } \sum_{I\in A_{m,2y}} a_I x^I<0\right\},\] is semi-algebraic~\cite[Theorem 2.2.1]{Bochnak1998RealGeometry}. Let $\Pi^c(Q)$ denote the complement of $\Pi(Q)$. Therefore, $P_{m,2y} = F_{m,2y}\setminus \Pi(Q)=F_{m,2y} \cap \Pi^c(Q)$ is semi-algebraic since the complement and finite intersection of semi-algebraic set is semi-algebraic~\cite[Section 2.1]{Bochnak1998RealGeometry}. 
\end{proof}
\begin{lemma}
\label{lemma: SOS semi-algebraic}
The set $\Sigma_{m,2y}$ is a semi-algebraic subset of $F_{m,2y}$.
\end{lemma}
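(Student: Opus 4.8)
The plan is to exhibit $\Sigma_{m,2y}$ as the image under a linear map of a set that is manifestly semi-algebraic, and then invoke the Tarski--Seidenberg projection theorem (the same tool used in the proof of Lemma~\ref{lemma: Pm2y semi-algebraic}). First I would fix a basis for $F_{m,d}$, say the monomial basis indexed by $A_{m,d}$, and note that every sum of squares in $F_{m,2y}$ can be written as $\sum_{i=1}^{N} q_i^2$ with $q_i \in F_{m,d}$; moreover, by a standard Carath\'eodory-type argument (or by the Gram matrix description), one may take the number of squares $N$ to be bounded, e.g. $N = \dim F_{m,d} = |A_{m,d}|$. This boundedness is essential: it lets us describe sums of squares using finitely many real parameters rather than an a priori unbounded number.

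Next I would introduce the parameter space $\R^{N |A_{m,d}|}$ whose coordinates are the coefficient vectors $(c_{i})_{i=1}^N$ of the polynomials $q_1,\dots,q_N \in F_{m,d}$, together with the coefficient space $\R^{|A_{m,2y}|}$ of the target form. The condition that a coefficient vector $a \in \R^{|A_{m,2y}|}$ equals the coefficient vector of $\sum_{i=1}^N q_i^2$ is a system of polynomial equations in $(a, c_1, \dots, c_N)$: each coefficient $a_I$ (for $I \in A_{m,2y}$) is a fixed quadratic polynomial in the entries of the $c_i$, obtained by expanding the product and collecting terms. Hence the set
\[
\widetilde{Q} = \left\{ (a, c_1,\dots,c_N) : a_I = \sum_{i=1}^N \sum_{\substack{J,K \in A_{m,d} \\ J+K = I}} (c_i)_J (c_i)_K \text{ for all } I \in A_{m,2y} \right\}
\]
is a semi-algebraic (indeed algebraic) subset of $\R^{|A_{m,2y}|} \times \R^{N|A_{m,d}|}$ by Definition~\ref{definition: semi-algebraic set}. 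Its projection onto the first factor $\R^{|A_{m,2y}|}$ is exactly (the coefficient description of) $\Sigma_{m,2y}$, and this projection is semi-algebraic by \cite[Theorem 2.2.1]{Bochnak1998RealGeometry}. Since $\Sigma_{m,2y} = F_{m,2y} \cap \Pi(\widetilde{Q})$ under the identification of $F_{m,2y}$ with $\R^{|A_{m,2y}|}$, and finite intersections of semi-algebraic sets are semi-algebraic, we conclude.

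The only real subtlety — and the step I would be most careful about — is justifying the bound on the number of squares $N$, so that the parameter space is finite-dimensional and Tarski--Seidenberg applies. The cleanest route is via Gram matrices: $p \in \Sigma_{m,2y}$ if and only if there is a positive semidefinite matrix $G$ (indexed by $A_{m,d}$) with $p(x) = v(x)^\intercal G\, v(x)$ where $v(x)$ is the vector of degree-$d$ monomials; writing $G = \sum_i w_i w_i^\intercal$ as a sum of at most $|A_{m,d}|$ rank-one terms gives the required bound. Alternatively one can phrase the argument directly in terms of Gram matrices: the set of pairs $(a, G)$ with $G \succeq 0$ and $v(x)^\intercal G v(x)$ having coefficient vector $a$ is semi-algebraic (positive semidefiniteness being expressible by polynomial inequalities on the principal minors, or on $\xi^\intercal G \xi$), and project onto $a$. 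Either way the argument is routine once the boundedness is in hand; everything else is bookkeeping with the projection theorem already exploited in Lemma~\ref{lemma: Pm2y semi-algebraic}.
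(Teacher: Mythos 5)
Your proof is correct, but it takes a more explicit, from-scratch route than the paper does. The paper's proof is a one-line citation: $\Sigma_{m,2y}$ is a projected spectrahedron (via the Gram matrix description), and projected spectrahedra are semi-algebraic by \cite[Theorem 6.17]{2012SemidefiniteGeometry}. Your primary argument instead bounds the number of squares $N$ by $\dim F_{m,y}$, parameterizes sum-of-squares decompositions by the $N$ coefficient vectors $(c_i)$, exhibits the incidence set $\widetilde{Q}$ as an algebraic variety, and invokes the Tarski--Seidenberg projection theorem directly. This buys you a self-contained argument that doesn't rely on the abstract machinery of spectrahedra, and it parallels the proof of Lemma~\ref{lemma: Pm2y semi-algebraic} nicely, which is a reasonable expository choice. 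The key subtlety you flag — that $N$ can be taken bounded — is correctly identified and correctly handled; without it the parameter space would be infinite-dimensional and Tarski--Seidenberg would not apply. Your ``alternative'' Gram-matrix phrasing (project the semi-algebraic set of pairs $(a,G)$ with $G\succeq 0$ and matching coefficients) is in fact essentially identical to the paper's projected-spectrahedron argument, so you have effectively given both proofs. One small notational slip: the squares $q_i$ should live in $F_{m,y}$, not $F_{m,d}$, though this inconsistency originates in the lemma statement itself (which mixes $y$ and $d$) and does not affect the substance of your argument.
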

\begin{proof}
$\Sigma_{m,2y}$ is a projected spectrahedron, which is always a semialgebraic set~\cite[Theorem 6.17]{2012SemidefiniteGeometry}.
\end{proof}
\begin{lemma}
\label{lemma: P-sum semi-algebraic}
The set $P_{m,2y}\setminus \Sigma_{m,2y}$ is a semi-algebraic subset of $F_{m,2y}$. 
\end{lemma}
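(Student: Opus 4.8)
The plan is to deduce this immediately from the two preceding lemmas together with the closure properties of the class of semi-algebraic sets. Recall that the semi-algebraic subsets of a fixed $\R^n$ form a Boolean algebra: finite unions, finite intersections, and complements of semi-algebraic sets are again semi-algebraic (see~\cite[Section 2.1]{Bochnak1998RealGeometry}). In particular, if $A$ and $B$ are semi-algebraic subsets of $F_{m,2d}$, then so is $A\setminus B = A\cap B^c$, where $B^c$ denotes the complement of $B$ in $F_{m,2d}$.

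Concretely, the steps are as follows. First invoke Lemma~\ref{lemma: Pm2y semi-algebraic} to conclude that $P_{m,2y}$ is a semi-algebraic subset of $F_{m,2d}$. Next invoke Lemma~\ref{lemma: SOS semi-algebraic} to conclude that $\Sigma_{m,2y}$ is a semi-algebraic subset of $F_{m,2d}$, and hence that its complement $\Sigma_{m,2y}^c$ is semi-algebraic. Finally, write $P_{m,2y}\setminus\Sigma_{m,2y} = P_{m,2y}\cap\Sigma_{m,2y}^c$ and apply closure under finite intersection to conclude that this set is semi-algebraic.

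There is essentially no obstacle here: the content of the statement lies entirely in Lemmas~\ref{lemma: Pm2y semi-algebraic} and~\ref{lemma: SOS semi-algebraic}, and the present lemma is a one-line corollary of those together with the Boolean-algebra structure of semi-algebraic sets. The only thing worth a sentence of care is making sure the ambient space is the same finite-dimensional real vector space $F_{m,2d}$ in all three cases (after the usual identification of $F_{m,2d}$ with $\R^N$ via a choice of monomial basis), so that the Boolean operations are being applied within a single $\R^N$; this is already implicit in how the earlier lemmas were stated and proved.
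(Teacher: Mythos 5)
Your proof is correct and follows essentially the same route as the paper: invoke Lemmas~\ref{lemma: Pm2y semi-algebraic} and~\ref{lemma: SOS semi-algebraic}, write $P_{m,2y}\setminus\Sigma_{m,2y} = P_{m,2y}\cap\Sigma_{m,2y}^c$, and appeal to closure of semi-algebraic sets under complement and finite intersection. The extra remark about keeping the ambient space fixed is reasonable care but not a substantive departure.
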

\begin{proof}
By Lemmas \ref{lemma: Pm2y semi-algebraic} and \ref{lemma: SOS semi-algebraic}, $P_{m,2y}$ and $\Sigma_{m,2y}$ are both semi-algebraic. 

Let $\Sigma_{m,2y}^c=F_{m,2y} \setminus \Sigma_{m,2y}$ denote the complement of $\Sigma_{m,2y}$. Then $P_{m,2y}\setminus \Sigma_{m,2y}=P_{m,2y}\cap \Sigma_{m,2y}^c$. By \cite[Section 2.1]{Bochnak1998RealGeometry}, the complement of a semi-algebraic set is semi-algebraic and the intersection of two semi-algebraic sets are semi-algebraic. Therefore, $P_{m,2y}\setminus \Sigma_{m,2y}$ is semi-algebraic.
\end{proof}
There is a well-defined notion of dimension for semi-algebraic sets. See, for instance,  Bochnak \cite[Section 2.8]{Bochnak1998RealGeometry} for the precise definition. For our purposes, we only make use of certain basic properties of dimension for semi-algebraic sets.
\begin{lemma}
\label{lemma: subset-dimension}
If $A$ and $B$ are semi-algebraic subsets of $\R^n$ and $A\subseteq B$ then $\dim (A) \leq \dim (B)$.
\end{lemma}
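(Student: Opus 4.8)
The plan is to work directly from the definition of the dimension of a semi-algebraic set. Following Bochnak~\cite[Section 2.8]{Bochnak1998RealGeometry}, for a nonempty semi-algebraic set $S\subseteq\R^n$ one has $\dim(S)=\dim\bigl(\R[x_1,\dots,x_n]/I(S)\bigr)$, the Krull dimension of the quotient of the polynomial ring by the ideal $I(S)$ of all polynomials vanishing on $S$ (equivalently, $\dim(S)$ is the dimension of the Zariski closure of $S$); by convention $\dim(\emptyset)=-1$, so we may assume $A\neq\emptyset$, and then $B\neq\emptyset$ as well.

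The first step is the elementary containment of vanishing ideals: since $A\subseteq B$, every polynomial vanishing on $B$ also vanishes on $A$, so $I(B)\subseteq I(A)$. Consequently the quotient map is well defined and gives a surjective ring homomorphism $\R[x_1,\dots,x_n]/I(B)\twoheadrightarrow \R[x_1,\dots,x_n]/I(A)$.

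The second step is the standard fact that Krull dimension cannot increase under a surjection of commutative rings: any chain of prime ideals in the quotient $\R[x]/I(A)$ pulls back along the surjection to a chain of prime ideals of the same length in $\R[x]/I(B)$. Hence $\dim\bigl(\R[x]/I(A)\bigr)\le\dim\bigl(\R[x]/I(B)\bigr)$, which is exactly $\dim(A)\le\dim(B)$.

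There is essentially no obstacle here: the result is a one-line consequence of the definition together with the monotonicity of Krull dimension under quotients. The only points requiring a little care are bookkeeping ones — matching Bochnak's convention for $\dim(\emptyset)$, and recalling that $I(S)=I(\overline{S}^{\,\mathrm{Zar}})$ so that the ideal-theoretic and Zariski-closure formulations of dimension agree. An alternative route, avoiding commutative algebra altogether, is to invoke a cylindrical algebraic decomposition of $\R^n$ adapted simultaneously to $A$ and $B$: then $A$ and $B$ are each finite unions of cells, every cell contained in $A$ is (since $A\subseteq B$) also one of the cells comprising $B$, and the dimension of a semi-algebraic set equals the maximum dimension of a cell appearing in such a decomposition, so monotonicity is immediate.
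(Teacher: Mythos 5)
Your proof is correct, but it takes a genuinely different route from the paper. The paper's proof is a one-liner that writes $B = A\cup B$ and then invokes Bochnak's Proposition~2.8.5, which states that the dimension of a finite union of semi-algebraic sets is the maximum of the dimensions of the pieces; from $\dim(B)=\max\{\dim(A),\dim(B)\}$ the inequality is immediate. You instead unwind the definition of $\dim$ as the Krull dimension of $\R[x_1,\dots,x_n]/I(S)$, observe that $A\subseteq B$ gives $I(B)\subseteq I(A)$ and hence a surjection of coordinate rings, and conclude via the standard fact that Krull dimension is non-increasing under ring surjections (chains of primes pull back). Both arguments are valid. The paper's approach is shorter and stays entirely at the level of quotable facts about semi-algebraic dimension; yours is more self-contained, tracing the inequality directly to the algebraic definition, at the cost of invoking a small piece of commutative algebra. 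Your alternative via a common cylindrical algebraic decomposition adapted to both $A$ and $B$ is also sound, and is in fact closest in spirit to how Proposition~2.8.5 itself is proved, so in some sense it re-derives the paper's cited lemma rather than bypassing it. The only minor caveat, which you already flag, is to fix the convention for $\dim(\emptyset)$ so the empty case is handled uniformly.
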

\begin{proof}
Since $A\subseteq B$ we have that $A \cup B = B$. By \cite[Proposition 2.8.5]{Bochnak1998RealGeometry}, \[\dim(B)=\dim(A \cup B)=\max \{\dim(B),\dim(A)\} \geq \dim(A). \qedhere\]
\end{proof}
In what follows we often encounter unions of images of semialgebraic maps. When the union is finite, these are semialgebraic sets.
\begin{lemma}
\label{lemma: union of semi-algebraic map}
Let $\Omega$ be a finite set. For each $i\in \Omega$ let $a_i$ be a positive integer and let $\gamma_i:\R^{a_i}\rightarrow \R^b$ be a semi-algebraic map. Then the set $\bigcup_{i\in \Omega}\gamma_i \left(\R^{a_i} \right)$ is semi-algebraic.
\end{lemma}
\begin{proof}
Given $\gamma_i$ is a semi-algebraic map, by~\cite[Proposition 2.2.7]{Bochnak1998RealGeometry}, the set $\gamma_i \left(\R^{a_i} \right)$ is semi-algebraic. By~\cite[Section 2.1]{Bochnak1998RealGeometry}, the finite union of semi-algebraic sets is semi-algebraic. 
\end{proof}

\subsection{Hyperbolic polynomials}
\label{section: hyperbolic polynomials}
Next, we recall the definitions of hyperbolic polynomials and hyperbolicity cones, and summarize basic properties that we use throughout the paper. A homogeneous polynomial $p\in F_{n,d}$ is \emph{hyperbolic with respect to $e\in \R^n$} if
\begin{itemize}
\item $p(e)>0$ and
    \item for all $x\in \R^n$, the univariate polynomial $p(te-x)$, in the variable $t\in \R$, has only real roots.
\end{itemize}
Note that throughout the paper we use $n$ for the number of variables of a hyperbolic polynomial and $d$ for the degree of a hyperbolic polynomial, whereas for general homogeneous polynomials of even degree we use $m$ and $2y$ for the number of variables and the degree, respectively. 
Denote by $\text{Hyp}_{n,d}(e)$ the set of homogeneous polynomials of degree $d$ in $n$ variables that are hyperbolic with respect to $e$. If $p\in \text{Hyp}_{n,d}(e)$ and $x\in \R^n$, we denote the roots of $t \mapsto p(te-x)$ as $\lambda_1^{p,e}(x)\geq \lambda_2^{p,e}(x)\geq \dots \geq \lambda_{d}^{p,e}(x)$, which are also known as the \emph{hyperbolic eigenvalues} of $p$ with respect to $e$. Define the \emph{multiplicity} of $x$ to be the multiplicity of $0$ as a hyperbolic eigenvalue of $x$ with respect to $p$ and $e$. The associated \emph{hyperbolicity cone} is 
\[\Lambda_+(p,e)= \left \{x\in \R^n:\lambda_i^{p,e}(x) \geq 0 \; \text{ for all }i=1,2,\ldots,d \right\}.\] 
It turns out that any such hyperbolicity cone is a closed convex cone~\cite{Garding1959AnPolynomials}. Let $\Lambda_{++}(p,e)$ ($\partial \Lambda_+(p,e)$) denote the interior (respectively, boundary) of the hyperbolicity cone $\Lambda_+(p,e)$. 
The following result says that any direction in the interior of $\Lambda_+(p,e)$ is a direction of hyperbolicity for $p$.
\begin{proposition}[G{\aa}rding {\cite[Section 2]{Garding1959AnPolynomials}}]
\label{proposition: hyperbolicity cone inclusion}
If $p$ is hyperbolic with respect to $e$ and $c\in \Lambda_{++}(p,e)$ then $p$ is hyperbolic with respect to $c$ and $\Lambda_{++}(p,c) = \Lambda_{++}(p,e)$.

\end{proposition}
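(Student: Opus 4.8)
The statement to prove is Proposition~\ref{proposition: hyperbolicity cone inclusion} (Gårding's result). Let me sketch a proof.

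\textbf{The plan.} The plan is to work directly from the definition of hyperbolicity and exploit two classical facts about hyperbolic polynomials: first, that the hyperbolicity cone $\Lambda_+(p,e)$ is convex with nonempty interior $\Lambda_{++}(p,e)$ consisting precisely of the points where all hyperbolic eigenvalues are strictly positive; and second, a homogeneity/continuity argument on the location of roots of the univariate restrictions $t\mapsto p(te-x)$. I would first establish that $c\in\Lambda_{++}(p,e)$ implies $p$ is hyperbolic in direction $c$, and then separately that the open hyperbolicity cones agree.

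\textbf{Step 1: $p$ is hyperbolic with respect to $c$.} Since $c\in\Lambda_{++}(p,e)$, all hyperbolic eigenvalues $\lambda_i^{p,e}(c)$ are strictly positive, so $p(c)=p(e)\prod_i\lambda_i^{p,e}(c)>0$ (up to the leading-coefficient normalization, using $p(te-c)$ evaluated appropriately); in any case $p(c)\neq 0$, and by connectedness of $\Lambda_{++}(p,e)$ together with $p(e)>0$ one gets $p(c)>0$. The real-rootedness requirement is the substantive part: I must show that for every $x\in\R^n$, the univariate polynomial $s\mapsto p(sc-x)$ has only real roots. The standard approach is to consider the two-parameter family $q(s,t)=p(sc+te-x)$ and use the fact that for fixed real $t$ with $te-x$ ranging over $\R^n$, $p((\cdot)c-(x-te))$ — wait, more cleanly: fix $x$ and consider $g(s,t) = p(sc - x + te)$. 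For each fixed real $s$, the polynomial $t\mapsto p(sc-x+te) = p(te - (x-sc))$ has only real roots in $t$ by hyperbolicity with respect to $e$; and the roots depend continuously on $s$. One then argues, via a homotopy/degree argument (Hurwitz-type, or the classical argument in Gårding or in the Bauschke–Güler–Lewis–Sendov exposition), that since $c$ lies in the region where the $e$-eigenvalues are all positive, the roots in the $s$ variable cannot escape to the complex plane. Concretely: suppose for contradiction that $p(s_0 c - x)=0$ for some non-real $s_0$; scale and use that $p(sc-x) = c\text{-leading}\cdot\prod(s-\mu_j(x))$ and track how the $\mu_j$ move as $x$ is deformed to $0$ (where all roots are $0$) — the roots can only collide and leave the real axis if $p$ and its relevant derivative vanish simultaneously, which is controlled by the positivity of eigenvalues along the segment.

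\textbf{Step 2: equality of the open cones.} For this direction I would show both inclusions. Since $p$ is hyperbolic with respect to $c$, $\Lambda_{++}(p,c)$ is an open convex cone containing $c$. To see $\Lambda_{++}(p,e)\subseteq\Lambda_{++}(p,c)$: take any $y\in\Lambda_{++}(p,e)$; then the whole open segment between $c$ and $y$ lies in $\Lambda_{++}(p,e)$ by convexity, hence $p$ does not vanish on that segment, and a connectedness argument (the segment connects $c$, which is in $\Lambda_{++}(p,c)$, to $y$ without crossing $\partial\Lambda_+(p,c)=\{p=0\}\cap\overline{\Lambda_+(p,c)}$) forces $y\in\Lambda_{++}(p,c)$. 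The reverse inclusion $\Lambda_{++}(p,c)\subseteq\Lambda_{++}(p,e)$ follows symmetrically once we note $e\in\Lambda_{++}(p,c)$ — which itself needs a small argument: $e$ is in $\overline{\Lambda_+(p,c)}$ (limit of points on the segment) and $p(e)\neq 0$, so $e$ is in the interior. Then apply the first inclusion with the roles of $c$ and $e$ swapped.

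\textbf{Main obstacle.} The crux is Step 1, the proof that hyperbolicity transfers from $e$ to $c$ — this is exactly Gårding's theorem and the only part that isn't soft topology. The clean modern route is the continuity-of-roots / Hurwitz argument: parametrize by a real variable moving $e$ toward $c$ inside $\Lambda_{++}(p,e)$ and show no pair of real roots (in the $x$-restriction) can collide and become complex, because a double root would force a point of the segment onto a variety where $p$ vanishes, contradicting that the segment stays in the open cone. I would cite or reproduce this carefully rather than wave at it, since everything else (convexity of the cone, openness, connectedness) is either already quoted or immediate. The rest of the paper only uses the statement, so a reference to Gårding~\cite{Garding1959AnPolynomials} for Step 1 with the topological Step 2 spelled out is the most economical presentation.
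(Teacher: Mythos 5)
The paper does not prove this proposition: it is stated as a cited result from G\aa rding~\cite[Section 2]{Garding1959AnPolynomials}, with no in-paper argument to compare against, and the surrounding text simply points the reader to Bauschke et al.\ and Renegar for further background. So the paper's ``approach'' here is to cite, and your plan to reproduce the classical argument while falling back on G\aa rding for the hard step is entirely consistent with that.

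As to the sketch itself: you correctly identify that the transfer of hyperbolicity from $e$ to $c$ (your Step~1) is the only genuinely hard part, and your outline follows the standard route. But as written it is not yet a proof: the sentence ``the roots can only collide and leave the real axis if $p$ and its relevant derivative vanish simultaneously, which is controlled by the positivity of eigenvalues along the segment'' is the heuristic that the Hurwitz/continuity-of-roots argument is supposed to make rigorous, not the argument itself; you would need to either carry that out in full or cite it, which you acknowledge. Your Step~2 is essentially correct but implicitly relies on the lemma that $\Lambda_{++}(p,c)$ is the connected component of $\{x:p(x)\neq 0\}$ containing $c$; this is itself a nontrivial fact (the paper cites it later from Renegar, Proposition~1, in the proof of Proposition~\ref{prop: boundary of hyperbolicity cone directional derivative relaxation}), and once it is invoked explicitly for both directions, equality of the two open cones follows immediately because they are connected components of the same open set both containing $c$ --- slightly cleaner than the segment-chasing you write out. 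None of this is a defect relative to the paper, since the paper proves nothing here; but if you did want to supply a self-contained proof, Step~1 is the place where real work remains.
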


More properties regarding hyperbolic polynomials, hyperbolic eigenvalues and hyperbolicity cones can be found in, for example, \cite{Bauschke2001HyperbolicAnalysis} and \cite{Renegar2006HyperbolicRelaxations}. 

If $p\in F_{n,d}$ is a homogeneous polynomial and $e\in \R^n$ then we use the notation $D_ep$ to denote the directional derivative of $p$ in the direction $e$, i.e., $D_ep(x) = \left.\frac{d}{dt}p(x+te)\right|_{t=0}$. If $a,e\in \R^n$ then we use the notation $D_{ae}^2p(x):= D_a D_ep(x)$ for the iterated directional derivative. 

If $p$ is hyperbolic with respect to $e$, then (see, e.g.,~\cite{Garding1959AnPolynomials,Renegar2006HyperbolicRelaxations}) the directional derivative $D_ep$ is also hyperbolic with respect to $e$. Furthermore, $\Lambda_+(D_ep,e) \supseteq \Lambda_+(p,e)$, i.e., the hyperbolicity cone of the directional derivative contains the hyperbolicity cone of $p$~\cite{Renegar2006HyperbolicRelaxations}.

Next, we show that directional derivatives of hyperbolic polynomials in directions that are in the boundary of the hyperbolicity cone enjoy similar properties to directional derivatives in interior directions. 
We first summarize two technical facts about directional derivatives, and convergent sequences of real-rooted univariate polynomials.

\begin{lemma}
\label{lemma: homogeneous polynomial equivalent terms Taylor's expansion}
    Given $x,u\in \R^n$ and $p\in F_{n,d}$, it follows that $D_u^kp(x)=\frac{k!}{(d-k)!}D_x^{d-k}p(u)$.
\end{lemma}
\begin{proof}
   Let $t,\lambda \in\R$, and 
   expand $p(\lambda x + tu)$ in powers of $t$ and $\lambda$ as 
   \[p(\lambda x +tu) = \sum_{k=0}^{d} t^k\lambda^{d-k}a_k(u,x)\]
   for some polynomials $a_k$. 
   On the one hand we have that 
   \[ a_k(u,x) = \frac{1}{k{!}} \left.\frac{\partial^k}{\partial t^k} p(\lambda x + tu)\right|_{\lambda = 1,t=0} = \frac{1}{k{!}}D_u^kp(x).\]
   On the other hand, we have that 
   \[ a_k(u,x) = \frac{1}{(d-k){!}} \left.\frac{\partial^{d-k}}{\partial \lambda^{d-k}} p(\lambda x + tu)\right|_{\lambda = 0,t=1} = \frac{1}{(d-k){!}}D_x^{d-k}p(u).\]
   Equating these two expressions for $a_k(u,x)$ completes the proof. 
\end{proof}
The following result, 
Lemma~\ref{lemma: coefficient-wise limit preserve real-rootedness}, is a standard fact about real-rooted univariate polynomials. Since we could not find a proof of this result, in this form, that is easily accessible in the literature, for completeness we include a proof herein.
\begin{lemma}
    \label{lemma: coefficient-wise limit preserve real-rootedness}
    Let $(G_n)_{n\in \N}$ be a convergent sequence of real-rooted monic univariate polynomials of degree $d$ with real coefficients. Then 
    $G = \lim_{n\rightarrow \infty} G_n$ is a real-rooted monic univariate polynomial of degree $d$ with real coefficients.
\end{lemma}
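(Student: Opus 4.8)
The plan is to use the linear factorization of each $G_n$ over $\R$, confine all the roots to a fixed compact interval, extract a convergent subsequence of root tuples, and then pass to the limit using continuity of the elementary symmetric functions. Throughout, ``convergent'' is understood coefficient-wise: since each $G_n$ is monic of degree $d$, write $G_n(x) = x^d + \sum_{k=0}^{d-1} a_{n,k}x^k$, and convergence means $a_{n,k}\to a_k$ as $n\to\infty$ for each $k$. In particular the limit $G(x) = x^d + \sum_{k=0}^{d-1} a_k x^k$ is automatically monic of degree $d$ with real coefficients, so the only content of the statement is that $G$ is real-rooted.

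First I would bound the roots uniformly. A convergent sequence of reals is bounded, so there is a constant $M$ with $|a_{n,k}|\leq M$ for all $n$ and all $0\leq k\leq d-1$. By the standard Cauchy bound on the roots of a monic polynomial, every complex root $z$ of $G_n$ satisfies $|z|\leq 1+M =: R$. Since $G_n$ is real-rooted, we may write $G_n(x) = \prod_{i=1}^d (x-r_{n,i})$ with $r_{n,1}\geq r_{n,2}\geq\cdots\geq r_{n,d}$ all real and all lying in $[-R,R]$.

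Next, the root tuples $(r_{n,1},\dots,r_{n,d})$ all lie in the compact set $[-R,R]^d$, so by Bolzano--Weierstrass some subsequence $(r_{n_j,1},\dots,r_{n_j,d})$ converges to a tuple $(r_1,\dots,r_d)\in\R^d$. The coefficients of $\prod_{i=1}^d(x-r_{n_j,i})$ are, up to sign, the elementary symmetric polynomials evaluated at the $r_{n_j,i}$, hence depend continuously on the root tuple; so along this subsequence the coefficients of $G_{n_j}$ converge to those of $\prod_{i=1}^d(x-r_i)$. On the other hand, the coefficients of $G_{n_j}$ converge to those of $G$. Matching coefficients gives $G(x) = \prod_{i=1}^d(x-r_i)$ with every $r_i$ real, so $G$ is real-rooted of degree $d$, as claimed.

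The only genuine subtlety -- and thus the step I would treat most carefully -- is ruling out roots escaping to infinity in the limit (equivalently, a drop in degree), which is precisely what the Cauchy bound together with boundedness of the convergent coefficient sequences rules out; once the roots are trapped in a compact set, the remainder is a routine compactness-and-continuity argument. A slicker alternative would be to invoke Hurwitz's theorem from complex analysis to conclude that any root of $G$ is a limit of roots of the $G_n$ and hence real, but the elementary argument above is self-contained and is the version I would write down.
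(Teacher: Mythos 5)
Your proof is correct, but it takes a genuinely different route from the paper. The paper argues by contradiction: it assumes $G$ has a complex root $z_c$ with $\operatorname{Im}(z_c)\neq 0$ and invokes the continuity theorem for polynomial roots (that the roots of a monic polynomial vary continuously with its coefficients, typically proved via Rouch\'e's theorem; the paper cites Rahman--Schmeisser and Uherka--Sergott) to conclude that any $G_n$ with coefficients sufficiently close to those of $G$ must have a root in a small disc around $z_c$ that misses the real axis, contradicting real-rootedness of $G_n$. Your argument is direct: you use the Cauchy bound to confine all roots of all $G_n$ to a fixed compact interval $[-R,R]$, extract a convergent subsequence of ordered root tuples by Bolzano--Weierstrass, and pass to the limit using continuity of the elementary symmetric polynomials to identify $G$ with a product of real linear factors. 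Your version is more elementary and self-contained (no complex-analytic black box), at the cost of being slightly longer than the paper's argument once the continuity-of-roots theorem is taken for granted. Both are sound; the subtlety you flag---preventing roots from escaping to infinity---is exactly the point the Cauchy bound handles, and is also what the paper's monicity assumption quietly guarantees when it invokes the continuity theorem.
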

\begin{proof}
\label{lemma proof: coefficient-wise limit preserve real-rootedness}

Let $(G_n)_{n\in \N}$ be a convergent sequence of real-rooted monic univariate polynomials of degree $d$ with real coefficients. Suppose $(G_n)$ converges to $G=z^d + \sum_{v=0}^{d-1}a_vz^v=\prod_{j=1}^k(z-z_j)^{m_j}$ where $m_1+m_2+\dots+m_k=d$ is a monic univariate polynomial of degree $d$ with distinct zeros $z_1,\dots,z_k$ of multiplicities $m_1,\dots,m_k$. 
Since $G_n\rightarrow G$, it follows that for every $\delta>0$, there exists some positive integer $p(\delta)$ (depending on $\delta$) such that if 
$G_{p(\delta)} = z^d +\sum_{v=0}^{d-1}b_vz^v$ then $|b_v-a_v|<\delta$ for all $v=0,1,\ldots,d-1$.

Arguing by contradiction, we  assume that $G$ has at least two roots that are not real. Let $z_c$ be one of the complex roots of multiplicity $m_c$ in the form $(y+xi)^{m_c}$, where $i$ denotes the imaginary number and $x,y\in \R$ with $x\neq 0$. 
Fix some $\varepsilon$ that satisfies $0
<\varepsilon<|x|/2$. The continuity theorem for monic univariate polynomials  (see, for example,~\cite[Theorem 1.3.1]{Rahman2002AnalyticPolynomials} or~\cite{Uherka1977OnCoefficients}) tells us that there exists $\delta>0$ 
such that whenever $F = \sum_{v=0}^{d}b_vz^v$ satisfies $|b_v-a_v|<\delta$ for $v=0,1,\ldots,d-1$, $F$ has exactly $m_c$ roots in the open disc
\[\mathcal{D}(z_c,\varepsilon)\coloneqq\{z\in \C:|z-z_c|<\varepsilon\}.\]
Note that $\mathcal{D}(z_c,\varepsilon) \cap \R=\emptyset$. Therefore, by choosing $F = G_{p(\delta)}$, we see that $G_{p(\delta)}$ has at least $m_c>0$ complex roots, which contradicts our assumption that the sequence $(G_n)_{n\in \N}$ consists of real-rooted polynomials. We can, therefore, conclude that $G$ is real-rooted.
% , it follows that every $F$ with coefficients that are sufficiently close to those of $G$ must have at least one complex root.
% for any element $(G_n)\ni G_p=\sum_{v=0}^db_vz^v$, where $p$ is sufficiently large, whose coefficients satisfying $|a_v-b_v|<\delta$ for $v=1,\dots,d-1$ has exactly $m_j$ roots in the open disc \[\mathcal{D}(z_c,\varepsilon)\coloneqq\{z\in \C:|z-z_c|<\varepsilon\}.\] Note that $\mathcal{D}(z_c,\varepsilon) \cap \R=\emptyset$. However, since $G_p$ is real-rooted, $G_p$ cannot have any root in $\mathcal{D}(z_c,\varepsilon)$. This completes the proof.
\end{proof}

We are now in a position to prove the extended result entailing relations of hyperbolicity cones with respect to directional derivatives to the case of directions in the boundary of the cone.
\begin{proposition}
\label{prop: boundary of hyperbolicity cone directional derivative relaxation}
    Let $p\in F_{n,d}$ is hyperbolic with respect to $e\in \R^n$, $u \in \partial \Lambda_+(p,e)$ and $x\in \R^n$. Then, either $D_up(x)$ is identically zero or
    \begin{enumerate}[(i)]
        \item $D_up(x)$ is hyperbolic with respect to $e$ and
        \item $\Lambda_+(D_up(x),e) \supseteq \Lambda_+(p(x),e)$.
    \end{enumerate}
\end{proposition}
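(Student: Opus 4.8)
The plan is to obtain the boundary case as a limit of the interior case, using Proposition~\ref{proposition: hyperbolicity cone inclusion} together with the continuity provided by Lemma~\ref{lemma: coefficient-wise limit preserve real-rootedness}. Fix $x \in \R^n$ and suppose $D_up(x)$ is not identically zero as a polynomial in the remaining variable(s); more precisely, since $D_up$ is homogeneous of degree $d-1$, think of $q_u := D_u p \in F_{n,d-1}$ and assume $q_u \not\equiv 0$. Pick any $c \in \Lambda_{++}(p,e)$; then for every $\varepsilon > 0$ the point $u_\varepsilon := u + \varepsilon c$ lies in $\Lambda_{++}(p,e)$ (since the hyperbolicity cone is convex and $u$ is in its boundary, hence in the cone, while $c$ is interior). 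By Proposition~\ref{proposition: hyperbolicity cone inclusion}, $p$ is hyperbolic with respect to $u_\varepsilon$, and then by the standard fact quoted in the excerpt the directional derivative $q_{u_\varepsilon} := D_{u_\varepsilon} p$ is hyperbolic with respect to $e$, with $\Lambda_+(D_{u_\varepsilon}p, e) \supseteq \Lambda_+(p,e)$. Note $q_{u_\varepsilon} = D_u p + \varepsilon D_c p = q_u + \varepsilon D_c p \to q_u$ coefficientwise as $\varepsilon \to 0$.

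First I would transfer this to real-rootedness of univariate restrictions. For any fixed $z \in \R^n$, the univariate polynomials $t \mapsto q_{u_\varepsilon}(te - z)$ have degree at most $d-1$ and are real-rooted for every $\varepsilon > 0$; as $\varepsilon \to 0$ they converge coefficientwise to $t \mapsto q_u(te - z)$. To invoke Lemma~\ref{lemma: coefficient-wise limit preserve real-rootedness} I need monic polynomials of a fixed degree, so I would handle the leading coefficient with care: the coefficient of $t^{d-1}$ in $q_{u_\varepsilon}(te-z)$ is $q_{u_\varepsilon}(e)$, and since $q_u \not\equiv 0$ and $q_u(e) = D_u p(e) = \tfrac{1}{(d-1)!}\,?$... more robustly, I would argue that either $q_u(e) \neq 0$ — in which case we may normalize and apply the lemma directly to get real-rootedness of $t\mapsto q_u(te-z)$ for all $z$, establishing (i) — or $q_u(e) = 0$, in which case I pass to the limit after factoring out the appropriate power of $t$, or alternatively replace $e$ by a nearby $e' \in \Lambda_{++}(p,e)$ with $q_u(e') \neq 0$ (such $e'$ exists because $q_u \not\equiv 0$ and $\Lambda_{++}(p,e)$ is open, and by Proposition~\ref{proposition: hyperbolicity cone inclusion} hyperbolicity with respect to $e'$ is equivalent to hyperbolicity with respect to $e$). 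This yields conclusion (i): $D_up(x)$ — i.e. $q_u$ — is hyperbolic with respect to $e$.

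Next I would establish the cone containment (ii). Fix $w \in \Lambda_+(p,e)$; I must show $w \in \Lambda_+(q_u, e)$, i.e. that all roots of $t \mapsto q_u(te - w)$ are non-negative. For each $\varepsilon > 0$ we have $w \in \Lambda_+(p,e) \subseteq \Lambda_+(q_{u_\varepsilon}, e)$, so every root of $t \mapsto q_{u_\varepsilon}(te - w)$ is non-negative. As $\varepsilon \to 0$ these polynomials converge coefficientwise to $t \mapsto q_u(te-w)$, which by part (i) (and the normalization discussion above) is real-rooted of the appropriate degree; since the roots of a sequence of real-rooted polynomials converging coefficientwise to a real-rooted polynomial of the same degree converge to the roots of the limit (a consequence of continuity of roots, as in the proof of Lemma~\ref{lemma: coefficient-wise limit preserve real-rootedness}), the limiting roots are non-negative. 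Hence $w \in \Lambda_+(q_u, e)$, proving (ii). Here I should double-check the statement of (ii): the excerpt writes $\Lambda_+(p(x),e)$, which I read as $\Lambda_+(p,e)$ (a typo), and similarly $D_up(x)$ should be understood as the polynomial $D_up$ evaluated as a form in its natural variables; I would insert a sentence clarifying this notation.

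\textbf{Main obstacle.} The delicate point is the behaviour of the leading coefficient under the limit $\varepsilon \to 0$: Lemma~\ref{lemma: coefficient-wise limit preserve real-rootedness} requires monic polynomials of a \emph{fixed} degree, and a priori the degree of $t \mapsto q_{u_\varepsilon}(te-z)$ in $t$ could drop in the limit (if $q_u(e) = 0$), which would also break the "roots converge to roots" argument used for (ii). The cleanest fix, which I would adopt, is to first observe that since $q_u = D_u p \not\equiv 0$ and $\Lambda_{++}(p,e) \neq \emptyset$, we can choose the reference direction inside $\Lambda_{++}(p,e)$ so that $q_u$ does not vanish there — this is legitimate precisely because Proposition~\ref{proposition: hyperbolicity cone inclusion} lets us freely change the direction of hyperbolicity within $\Lambda_{++}(p,e)$ — thereby ensuring the degree stays exactly $d-1$ throughout the limiting argument. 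Everything else is routine bookkeeping with directional derivatives and homogeneity.
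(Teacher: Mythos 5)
Your proposal follows the same core strategy as the paper: approximate $u$ by interior points, apply the interior-direction case (Proposition~\ref{proposition: hyperbolicity cone inclusion} plus the standard directional-derivative facts), and pass to the limit via Lemma~\ref{lemma: coefficient-wise limit preserve real-rootedness}. The genuine difference, and the place where your argument has a gap, is how the two of you handle the possible vanishing of the leading coefficient $D_up(e)$.

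The paper proves directly that $D_up(e)>0$ whenever $D_up\not\equiv 0$, using a multiplicity argument: by Lemma~\ref{lemma: homogeneous polynomial equivalent terms Taylor's expansion}, $D_up(e)=\tfrac{1}{(d-1)!}D_e^{d-1}p(u)\geq 0$ since $u\in\Lambda_+(p,e)\subseteq\Lambda_+(D_e^{d-1}p,e)$; if this vanishes, then all hyperbolic eigenvalues of $u$ (which are non-negative) sum to zero, so $u$ has multiplicity $d$, and since multiplicity is invariant under change of interior direction (Renegar, Prop.~22), $D_up(e')=0$ for every $e'\in\Lambda_{++}(p,e)$, forcing $D_up\equiv 0$. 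This is what lets the limiting argument proceed with $e$ itself and with monic degree-$(d-1)$ polynomials throughout, as Lemma~\ref{lemma: coefficient-wise limit preserve real-rootedness} requires.

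Your ``cleanest fix'' — replacing $e$ by a nearby $e'\in\Lambda_{++}(p,e)$ with $q_u(e')\neq 0$ — can be made to work, but you state it as though Proposition~\ref{proposition: hyperbolicity cone inclusion} gives an automatic ``equivalence'' of hyperbolicity with respect to $e$ and $e'$. It does not: that proposition, applied to a polynomial $f$, lets you pass from hyperbolicity w.r.t.\ a base direction $e'$ to hyperbolicity w.r.t.\ any $c\in\Lambda_{++}(f,e')$. So after you establish that $q_u$ is hyperbolic w.r.t.\ $e'$, you still need to show $e\in\Lambda_{++}(q_u,e')$ before you may swap directions. That in turn requires first proving the cone containment $\Lambda_+(q_u,e')\supseteq\Lambda_+(p,e')=\Lambda_+(p,e)$ (part (ii), for the base point $e'$) and then passing to interiors so that $e\in\Lambda_{++}(p,e)\subseteq\Lambda_{++}(q_u,e')$. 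None of this is spelled out, and the appeal to Proposition~\ref{proposition: hyperbolicity cone inclusion} as providing a two-sided equivalence papers over exactly the step that matters. The paper's multiplicity argument sidesteps all of this because it works with $e$ directly.

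For (ii) your route is genuinely different and fine: you use continuity of roots of a coefficientwise-convergent sequence of real-rooted polynomials of fixed degree, whereas the paper uses Renegar's characterization of the hyperbolicity cone as the connected component of $\{x: D_up(x)\neq 0\}$ containing $e$, together with the positivity $D_up(e')>0$ on $\Lambda_{++}(p,e)$ established in (i). Both approaches are valid; the paper's is slightly more self-contained given the tools it has already set up, while yours needs the leading-coefficient normalization to be airtight (which loops back to the gap above).
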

\begin{proof}
If $D_up$ is identically zero then we are done. As such, we assume that $D_up$ is not identically zero.

We start by establishing $(i)$ whenever $D_up$ is not identically zero. To prove $D_up(x)$ is hyperbolic with respect to $e$, we require (a) $D_up(e)>0$ and (b) $D_up(x+te) \in \R[t]$ is real-rooted for all $x\in \R^n$.

To establish (a), it follows from Lemma~\ref{lemma: homogeneous polynomial equivalent terms Taylor's expansion} that $D_up(e)=\frac{1}{(d-1)!}D_{e}^{d-1}p(u)\geq 0$ since $u\in \Lambda_+(p,e)\subseteq\Lambda_+(D_{e}^{d-1}p,e)$.  If $D_up(e)>0$ we are done, so assume that $D_up(e) = D_{e}^{d-1}p(u)=0$. Then it is necessarily the case that $u$ has multipicity $d$ with respect to $(p,e)$. Let $e'$ be an arbitrary point in $\Lambda_{++}(p,e)$. Since the multiplicity of $u$ with respect to $(p,e)$ is the same as the multiplicity of $u$ with respect to $(p,e')$~\cite[Proposition 22]{Renegar2006HyperbolicRelaxations}, it follows that $D_up(e') = 0$ for all $e'\in \Lambda_{++}(p,e)$. Since $D_up$ is a polynomial, it follows that $D_up$ is identically zero, contradicting our assumption.

For (b), take a sequence $u_j\in \Lambda_{++}(p,e)$ such that $u_j$ converges to some $u\in \partial\Lambda_+(p,e)$. Let $x\in \R^n$ be arbitrary. Consider the sequence $G_j(t) = \frac{D_{u_j}p(te+x)}{D_{u_j}p(e)}$. Each element in the sequence is monic and real-rooted, since $u_j\in \Lambda_{++}(p,e)$ implies that $D_{u_j}p(x)$ is hyperbolic with respect to $e$. Since, $\frac{D_{u}p(te+x)}{D_{u}p(e)} = \lim_{j\rightarrow\infty}G_j(t)$, it follows from Lemma~\ref{lemma: coefficient-wise limit preserve real-rootedness} that $\frac{D_{u}p(te+x)}{D_{u}p(e)}$ is real-rooted. Since $x$ was arbitrary, and $D_up(e)>0$, it follows that $D_up(x+te)$ is real-rooted for all $x$.

Next, we prove $(ii)$. To show $\Lambda_+(D_up,e) \supseteq \Lambda_+(p,e)$,  it is sufficient to demonstrate that $\Lambda_{++}(D_up,e)\supseteq \Lambda_{++}(p,e)$ since the result then follows by taking the closure. 
From $(i)$, we have shown that $D_up$ is hyperbolic with respect to any $e'\in \Lambda_{++}(p,e)$. Therefore $D_up(e')>0$ for all $e'\in \Lambda_{++}(p,e)$. Since the hyperbolicity cone of $D_up$ is the connected component of $\{x:D_up(x)\neq 0\}$ containing $e$~\cite[Proposition 1]{Renegar2006HyperbolicRelaxations}, it follows that $\Lambda_{++}(D_up,e) \supseteq \Lambda_{++}(p,e)$. 
\end{proof}

\subsection{Relationship between non-negative polynomials and sums of squares}
\label{sec: Relationship between nonnegative polynomials and sum of squares polynoials}
Our later results rely on Hilbert's classification of the degrees and number of variables for which non-negative homogeneous polynomials are always sums of squares. 

\begin{theorem}[Hilbert \cite{Hilbert1888UeberFormenquadraten}]
\label{theorem: Hilbert}
Let $m$ and $y$ be positive integers. Then $P_{m,2y} = \Sigma_{m,2y}$ if and only if either:
\begin{enumerate}[(i)]
    \item$m\leq 2$ (at most two variables)
    \item $m=3$ and $y=2$ (three variables and degree four)
    \item $y=1$ (quadratic forms).
\end{enumerate}
\end{theorem}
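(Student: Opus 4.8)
The plan is to prove the two directions of the biconditional separately: that $P_{m,2y}=\Sigma_{m,2y}$ in cases (i)--(iii), and that $P_{m,2y}\supsetneq\Sigma_{m,2y}$ otherwise. For the forward (``if'') direction I would treat the three cases in increasing order of difficulty. If $y=1$, then $p\in P_{m,2}$ has the form $p(x)=x^\intercal Ax$ with $A$ symmetric positive semidefinite, and an eigendecomposition $A=\sum_i\ell_i\ell_i^\intercal$ gives $p=\sum_i\langle\ell_i,x\rangle^2\in\Sigma_{m,2}$. If $m\le 2$ (the case $m=1$ being trivial), I would factor a non-negative $p\in P_{2,2y}$ over $\C$ into linear forms: the real linear factors must occur with even multiplicity, or $p$ would change sign, and the non-real factors occur in conjugate pairs whose products are positive definite real binary quadratics, hence sums of two squares of linear forms; since sums of squares of forms are closed under multiplication (if $f=\sum a_i^2$ and $g=\sum b_j^2$ then $fg=\sum_{i,j}(a_ib_j)^2$) and the leading coefficient of $p$ is positive, $p\in\Sigma_{2,2y}$. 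The case $m=3$, $y=2$ of ternary quartics is the substantial one, and I expect it to be the main obstacle — the only place where anything beyond elementary manipulation is needed. Here I would prove that every $p\in P_{3,4}$ is a sum of at most three squares of quadratic forms, equivalently that the affine slice of $\Sym^6(\R)$ consisting of Gram matrices of $p$ always meets the positive semidefinite cone; I would follow Hilbert's original argument on the real plane quartic curve $\{p=0\}$, or a modern streamlining via a properness-and-connectedness argument for the parametrization $(\Sym^2\R^3)^3\to F_{3,4}$, $(\ell_1,\ell_2,\ell_3)\mapsto\ell_1^2+\ell_2^2+\ell_3^2$, whose image is closed, contains a full-dimensional open subset of $P_{3,4}$, and (as $P_{3,4}$ is connected) must therefore be all of $P_{3,4}$.

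For the reverse (``only if'') direction I would first record two propagation lemmas. First, if $p\in P_{m,2y}\setminus\Sigma_{m,2y}$, then regarding $p$ as a form in one extra variable shows $p\in P_{m+1,2y}\setminus\Sigma_{m+1,2y}$, since restricting any sum-of-squares representation in $m+1$ variables to $x_{m+1}=0$ would give one in $m$ variables. Second, if $p\in P_{m,2y}\setminus\Sigma_{m,2y}$ then $x_m^2p\in P_{m,2y+2}\setminus\Sigma_{m,2y+2}$: in any representation $x_m^2p=\sum_iq_i^2$, setting $x_m=0$ forces each $q_i$ to be divisible by $x_m$, and cancelling $x_m^2$ recovers a representation of $p$. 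The pairs $(m,y)$ not covered by (i)--(iii) are exactly those with $m=3$ and $y\ge 3$, or with $m\ge 4$ and $y\ge 2$, so by the two lemmas it suffices to exhibit witnesses in the base cases $P_{3,6}\ne\Sigma_{3,6}$ and $P_{4,4}\ne\Sigma_{4,4}$.

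For $P_{3,6}$ I would take the Motzkin form $M(x_1,x_2,x_3)=x_1^4x_2^2+x_1^2x_2^4+x_3^6-3x_1^2x_2^2x_3^2$, which is non-negative by the arithmetic--geometric mean inequality applied to $x_1^4x_2^2,\ x_1^2x_2^4,\ x_3^6$, and is not a sum of squares: in a hypothetical representation $M=\sum_iq_i^2$ with $q_i\in F_{3,3}$, the vanishing of the coefficients of $x_1^6,\ x_2^6,\ x_1^4x_3^2,\ x_2^4x_3^2,\ x_1^2x_3^4,\ x_2^2x_3^4$ successively excludes the monomials $x_1^3,\ x_2^3,\ x_1^2x_3,\ x_2^2x_3,\ x_1x_3^2,\ x_2x_3^2$ from every $q_i$, leaving each $q_i$ supported on $\{x_3^3,\,x_1^2x_2,\,x_1x_2^2,\,x_1x_2x_3\}$; then the coefficient of $x_1^2x_2^2x_3^2$ in $\sum_iq_i^2$ is the sum of squares of the $x_1x_2x_3$-coefficients, hence non-negative, contradicting the $-3$. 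For $P_{4,4}$ I would take the Choi--Lam form $C(x_1,x_2,x_3,x_4)=x_1^2x_2^2+x_2^2x_3^2+x_3^2x_1^2+x_4^4-4x_1x_2x_3x_4$, non-negative again by the arithmetic--geometric mean inequality, and not a sum of squares: in a representation $C=\sum_iq_i^2$ with $q_i\in F_{4,2}$, the vanishing of the coefficients of $x_1^4,\ x_2^4,\ x_3^4$ excludes $x_1^2,\ x_2^2,\ x_3^2$, after which the vanishing of $x_1^2x_4^2,\ x_2^2x_4^2,\ x_3^2x_4^2$ excludes $x_1x_4,\ x_2x_4,\ x_3x_4$, leaving each $q_i$ supported on $\{x_4^2,\,x_1x_2,\,x_1x_3,\,x_2x_3\}$; no product of two of these monomials equals $x_1x_2x_3x_4$, so that coefficient in $\sum_iq_i^2$ is zero, contradicting the $-4$. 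Combining the two directions yields the theorem.
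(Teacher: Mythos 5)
The paper does not prove this statement; it is a classical result of Hilbert cited without proof, and the only consequence the paper extracts from it is that $\Sigma_{3,6}\subsetneq P_{3,6}$ and $\Sigma_{4,4}\subsetneq P_{4,4}$. Your outline is therefore judged on its own terms, and it is the standard reconstruction: the spectral decomposition for $y=1$, the factorization-over-$\C$ argument with SOS-multiplicativity for $m\le 2$, and for the ``only if'' direction the two propagation lemmas (restriction to a coordinate hyperplane, and cancellation of $x_m^2$) reducing everything to the base cases $P_{3,6}\ne\Sigma_{3,6}$ and $P_{4,4}\ne\Sigma_{4,4}$, settled by the Motzkin and Choi--Lam forms with AM--GM non-negativity and monomial-exclusion non-SOS arguments. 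All of these steps are correct as written; the two exclusion arguments in particular are sound.

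The one genuine gap is in the ternary quartic case, which you rightly flag as the crux. The inference ``the image of $(\ell_1,\ell_2,\ell_3)\mapsto\ell_1^2+\ell_2^2+\ell_3^2$ is closed, contains a full-dimensional open subset of $P_{3,4}$, and $P_{3,4}$ is connected, hence the image is all of $P_{3,4}$'' does not follow as stated: a closed subset of a connected set that contains a nonempty open subset need not exhaust it (e.g.\ $[0,1]\cup\{2\}$ inside $[0,2]$). The missing ingredient is that the image is also \emph{open} in the interior of $P_{3,4}$, which in the modern streamlinings of Hilbert's proof comes from a local-submersion or degree argument at a generic fibre; this is precisely the hard technical core of the result and cannot be obtained from properness and connectedness alone. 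Either supply that openness step (or Hilbert's original Jacobian analysis of the plane quartic curve) or, as the paper itself does, invoke the ternary quartic theorem as a citation rather than attempt to re-derive it.
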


The ``only if'' direction of the theorem implies, for instance, that the cones $\Sigma_{3,6}$ and $\Sigma_{4,4}$ are strictly contained in the cones $P_{3,6}$ and $P_{4,4}$, respectively. In particular, there are non-negative polynomials that are not sums of squares.

\section{Hyperbolic certificates of non-negativity}

\label{section: hyperbolic non-negativity certificates}

In this section we define the families of non-negative polynomials, arising from hyperbolic polynomials, that play a central role in the paper. These come in two flavours. The first are non-negative polynomials arising as certain Wronskians of hyperbolic polynomials, which we call hyperwrons (see Section~\ref{sec: Hyperbolic-Wronskian certificates}). The second are non-negative polynomials arising from certain B\'ezoutians of hyperbolic polynomials, which we call hyperzouts (see Section~\ref{sec: Hyperbolic-Bezoutian certificates}). The family of hyperwrons is a subset of the family of hyperzouts. We refer readers to~\cite{Kummer2012HyperbolicSquares} and~\cite{Saunderson2019CertifyingOptimization} for the proof of non-negativity of these families of polynomials.

Hyperwrons are simpler to work with because, given the degree of a hyperwron, there is a finite set of possible degrees of hyperbolic polynomials that could be used to represent that hyperwron. In contrast, given a hyperzout, we are not aware of any \emph{a priori} upper bound on the degree of a hyperbolic polynomial whose B\'ezoutian gives rise to the hyperzout. To mitigate this issue, we consider a subset of hyperzouts, that we call degree-restricted hyperzouts, consisting of hyperzouts for which we explicitly constrain the degree of the hyperbolic polynomial used in their construction. We introduce these degree-restricted hyperzouts in Section~\ref{sec: Hyperbolic-Bezoutian certificates}.

\subsection{Hyperbolic-Wronskian certificates}
\label{sec: Hyperbolic-Wronskian certificates}

In this section, we define what it means for a homogeneous polynomial to have a hyperbolic-Wronskian certificate of non-negativity. We also define the set of hyperwrons. Throughout, we use the notation
$F_{m,k}^n$ to denote $n$-tuples of homogeneous polynomials of degree $k$ in $m$ variables, i.e., $F_{m,k}^n=\underbrace{F_{m,k}\times \dots \times F_{m,k}}_{\textup{$n$ copies}}$, and interpret $\phi\in F_{m,k}^n$ as a polynomial map $\phi:\R^m\rightarrow \R^n$ that is homogeneous of degree $k$.

If $p\in \textup{Hyp}_{n,d}(e)$ is a hyperbolic polynomial, and $u$ and $v$ are in the associated hyperbolicity cone $\Lambda_+(p,e)$, then the Wronskian of the univariate polynomials $p_{x,u}(t) = p(x+tu)$ and $D_vp_{x,u}(t) = D_vp(x+tu)$, i.e., 
\[ D_up(x)D_vp(x) - p(x)D_{uv}^2p(x),\]
is a non-negative homogeneous polynomial of degree $2(d-1)$~\cite[Theorem 3.1]{Kummer2012HyperbolicSquares}. Further non-negative polynomials can be generated by composing with a homogeneous polynomial map $\phi\in F_{m,k}^n$.

\begin{definition}
    \label{definition: Wronskian certificate}
A homogeneous polynomial $q\in F_{m,2y}$ has a \emph{hyperbolic-Wronskian certificate of non-negativity} if there exist positive integers $k,d,n$, a hyperbolic polynomial $p\in \textup{Hyp}_{n,d} (e)$, $u,v\in \Lambda_+(p,e)$ and map $\phi \in F_{m,k}^n$ such that 
\begin{equation*}
    \label{eq:hwc}
q(x)=D_u p(\phi(x)) D_v p(\phi(x))-p(\phi(x)) D_{uv}^2 p(\phi(x)).
\end{equation*}
\end{definition}
We say that a homogeneous polynomial $q\in F_{m,2y}$ is a \emph{hyperwron} if it has a hyperbolic-Wronskian certificate of non-negativity. We denote the collection of hyperwrons of degree $2y$ and $m$ variables by $\mathcal{W}_{m,2y}\subseteq P_{m,2y}$. In Theorem~\ref{theorem: m_y no Wronskian}, we give conditions on $m$ and $2y$ under which this containment is strict, i.e., there are non-negative polynomials that are not hyperwrons.

In the rest of this subsection, we introduce notation to help us keep track of different components of the set of hyperwrons. 
Given positive integers $m,n,d,k$ we define a map $\Theta:F_{n,d}\times \R^n\times \R^n\times F_{m,k}^n\rightarrow F_{m,2k(d-1)}$ by
\begin{equation}
\label{eq:theta-def}\Theta(p,u,v,\phi) = (D_up D_vp - p D_{uv}^2p)\circ \phi.\end{equation}
Note that $\Theta$ depends on $m,n,d,k$, but we suppress this from the notation for simplicity. If we define \[ \mathcal{S}^{n,m,d,k}_{e,W} = \{((p,u,v),\phi)\in F_{n,d}\times \R^n \times \R^n\times F_{m,k}^n\;:\;p\in \text{Hyp}_{n,d}(e),\; u,v\in \Lambda_+(p,e)\}\]
then, by definition, $\Theta(\mathcal{S}_{e,W}^{n,m,d,k}) \subseteq \mathcal{W}_{m,2k(d-1)}$. This notation is describing the hyperwrons that have hyperbolic-Wronskian certificates of non-negativity with respect to a hyperbolic polynomial of degree $d$ in $n$ variables and a map $\phi$ that is homogeneous of degree $k$. All of  $\mathcal{W}_{m,2y}$ can be built up from these pieces by varying $n$ and $(d,k)$ appropriately. 

Given a positive integer $y$, let $\Omega_y^W \coloneqq \left\{(d,k)\in\N^2:(d-1)k=y\right\}$. Note that since $y$ is positive, $(d,k)\in \Omega_{y}^{W}$ implies that $d\geq 2$ and $y\geq 1$. The set $\Omega_y^W$ describes the degrees of hyperbolic polynomials and maps $\phi$ that produce hyperwrons of degree $2y$. With this notation established, the set of hyperwrons of degree $2y$ in $m$ variables decomposes as 
\begin{equation}
\label{eq:hyperwron-basic-decomp}
    \mathcal{W}_{m,2y}= \bigcup_{(d,k)\in \Omega_y^W} \bigcup_{n\geq 1}\Theta(\mathcal{S}_{e,W}^{n,m,d,k}).
\end{equation}
The union is not disjoint---a hyperwron can have many different hyperbolic-Wronskian certificates of non-negativity. We will investigate this decomposition in more detail in Section~\ref{sec: dimension analysis --- Wronskian certificates}, as part of our 
analysis of the relationship between hyperwrons and all non-negative polynomials.

\subsection{Hyperbolic-B\'ezoutian certificates}
\label{sec: Hyperbolic-Bezoutian certificates}

In this section we discuss a generalisation of the hyperbolic-Wronskian certificate of non-negativity that is expressed in terms of the B\'ezoutian matrix of certain polynomials.

\begin{definition}[Krein and Naimark {\cite[Section 2.1]{Krein1981TheEquations}}]
\label{definition: Bezoutian}
Let $f(t),g(t)$ be univariate polynomials such that $\deg(g)\leq \deg(f)\leq d$. The B\'{e}zoutian $B_{d}(f,g)$ is the $d\times d$ matrix with $(j,l)$ entry $c_{jl}$ defined via the identity
\begin{align*}
    \frac{f(t)g(s)-f(s)g(t)}{t-s}=\sum_{j,l=0}^{d-1}c_{jl}t^{j}s^l.
    \label{eq: Bezoutian definition}
\end{align*}
\end{definition}
It will sometimes be useful to abuse notation when working with B\'{e}zoutians. In particular, if $a\in \R^{d+1}$ and $ b\in \R^{d}$, we use the notation $B_d(a,b)$ to mean $B_d(f,g)$ where $f(t) = \sum_{i=0}^{d}a_it^i$ and $g(t) = \sum_{j=0}^{d-1}b_jt^j$, identifying univariate polynomials with their coefficients in the monomial basis. We use whichever notation is more convenient, depending on the context.

If $p\in F_{n,d}$ and $u,v\in \R^n$, consider the polynomials $p_{x,u}(t)=p(x+tu)$ and $D_vp_{x,u}(t)=D_v p(x+tu)$. We think of these as univariate polynomials in $t$ (of degree at most $d$) with coefficients that are polynomials in $x$ and $u$, and linear in $v$. The parameterized B\'ezoutian
\begin{equation*}
    B_{p,u,v}(x):= B_d(p_{x,u},D_vp_{x,u})
    \label{eq:pBdef}
\end{equation*}
is a $d\times d$ matrix with entries that are polynomial in $x$ and $u$, and linear in $v$. The $(0,0)$ entry of $B_{p,u,v}(x)$ is the Wronskian of $p_{x,u}$ and $D_vp_{x,u}$, as pointed out in \cite[Remark 3.8]{Saunderson2019CertifyingOptimization} and~\cite[Remark 3.2]{Kummer2019SpectrahedralCurves}. In general, the $(j,l)$ entry (for $0\leq j,l\leq d-1$) of the parameterized B\'ezoutian $B_{p,u,v}(x)$ is homogeneous of degree $2(d-1) - (j+l)$ in $x$.

If $p\in \textup{Hyp}_{n,d}(e)$ is a hyperbolic polynomial and $u,v\in \Lambda_{+}(p,e)$, then the parameterized B\'ezoutian $B_{p,u,v}(x)$ is positive semidefinite for all $x$ 
 (see, e.g., \cite[Theorem 3.7]{Saunderson2019CertifyingOptimization} or~\cite[Theorem 2]{Kummer2017DeterminantalBezoutians}). This is, essentially, due to certain interlacing properties of $p_{x,u}$ and $D_vp_{x,u}$ (see Section~\ref{section: Discussions} for further discussion). 

To form scalar-valued homogeneous polynomials from a parameterised B\'ezoutian matrix, one can multiply on the left and right by polynomial maps of appropriate degrees. To this end, for $\mu\leq d$, let 
\begin{equation*}
    T_{\mu,k}^{m,d} = \underbrace{\{0\}\times\{0\}\times \dots \times \{0\}}_{\textup{$d-\mu$ copies}} \times F_{m,0} \times F_{m,k}\times \dots\times F_{m,(\mu-2)k}\times F_{m,(\mu-1)k}.
\end{equation*}
Then, whenever $\xi\in T_{\mu,1}^{n,d}$, the scalar-valued 
\begin{equation} 
\label{eq:bez0}\xi(x)^\intercal B_{p,u,v}(x)\xi(x)\end{equation}
is a homogeneous polynomial of degree $2(\mu-1)$.

Just as for hyperwrons, further non-negative polynomials can be generated by composing with a polynomial map $\phi\in F_{m,k}^n$ and taking $\xi\in T_{\mu,k}^{m,d}$ where $\mu \leq d$. Then, 
\begin{equation*} 
\label{eq:bez1}\xi(x)^\intercal B_{p,u,v}(\phi(x))\xi(x)
\end{equation*}
is a non-negative homogeneous polynomial of degree $2k(\mu-1)$.  As such, in this construction, hyperbolic polynomials of degree $d$ can potentially be used to generate non-negative polynomials of degree smaller than $d$. In some of our later discussion, we restrict to situations where $d<2(\mu-1)$ so that our methods give interesting results.

\begin{definition}
    \label{definition: Bezoutian certificates}
Let $q$ be a homogeneous polynomial in $m$ variables of degree $2y$. 
\begin{itemize}
    \item We say that $q$ has a \emph{hyperbolic-B\'ezoutian certificate of non-negativity} if there exist positive integers $\mu,k,n,d$ such that $\mu\leq d$ and $y=(\mu-1) k$, a hyperbolic polynomial $p\in \textup{Hyp}_{n,d} (e)$, $u,v\in \Lambda_+(p,e)$, and  maps  $\phi \in F_{m,k}^n$ and $\xi \in T_{\mu, k}^{m,d}$, such that $q(x)=\xi (x)^\intercal B_{p,u,v}(\phi(x))\xi (x)$.
    \item We say that $q$ has a \emph{degree-restricted hyperbolic-B\'ezoutian certificate of non-negativity} if, in addition, 
    either $\mu=2$ or $d \leq 2\mu-3$.
  
\end{itemize}

\end{definition}

We say that a homogeneous polynomial $q\in F_{m,2y}$ is a \emph{hyperzout} if it has hyperbolic-B\'ezoutian certificate of non-negativity. Similarly we say that $q$ is a \emph{degree-restricted hyperzout} if it has a degree-restricted hyperbolic-B\'ezoutian certificate of non-negativity. Further discussion on the motivation for the constraints on $\mu$ and $d$  imposed in the definition of degree-restricted hyperzouts is given in Remark~\ref{remark: explanation of degree restriction on degree restricted hyperzouts}.

We denote the collection of degree-restricted hyperzouts of degree $2y$ in $m$ variables by $\mathcal{B}_{m,2y}\subseteq P_{m,2y}$. In Theorem~\ref{theorem: m_y no Bezoutian} we will give conditions on $m$ and $2y$ under which there are non-negative polynomials that are not degree-restricted hyperzouts.

In the rest of this subsection, we introduce notation to keep track of the different components of the set of degree-restricted hyperzouts.
Given positive integers $m,n,d,k,\mu$ (with $\mu\leq d$) we define a map $\eta:F_{n,d}\times \R^n \times \R^n \times F_{m,k}^n\times T_{\mu,k}^{m,d} \rightarrow F_{m,2k(\mu-1)}$ by
\begin{equation}
    \label{eq:eta-def}
\eta(p,u,v,\phi,\xi)(x)=\xi(x)^\intercal B_{p,u,v}(\phi(x))\xi(x).
\end{equation} 
Note that $\eta$ depends on $m,n,d,k,\mu$, but we suppress this from the notation for simplicity. If we define 
\begin{align*}
\mathcal{S}_{e,B}^{n,m,d,k,\mu}&\!=\!\{(p,u,v,\phi,\xi)\in F_{n,d}\times \R^n\times \R^n \times F_{m,k}^n \times T_{\mu,k}^{m,d} :p\in \textup{Hyp}_{n,d}(e),\; u,v\in \Lambda_+(p,e)\}\\
& = \mathcal{S}_{e,W}^{n,m,d,k}\times T_{\mu,k}^{m,d},
\end{align*}
then, by definition, $\eta(\mathcal{S}_{e,B}^{n,m,d,k,\mu})\subseteq \mathcal{B}_{m,2k(\mu-1)}$. As with hyperwrons, all (degree-restricted) hyperzouts can be built up from these components. 

Given a positive integer $y$, let \begin{equation}
    \label{eq: OmegayB}
\Omega_{y}^B \coloneqq \left\{(d,y,2)\in\N^3:\,d\geq 2\right\} \cup \left\{(d,k,\mu)\in \N^3\;:\; \mu \leq d \leq 2\mu-3,\;k(\mu-1) = y\right\}.
\end{equation}
The set $\Omega_{y}^B$ denotes the set of degree data (for the hyperbolic polynomial, the map $\phi$ and the map $\xi$) that can produce degree-restricted hyperzouts of degree $2y$.
Using this notation, the set of degree-restricted hyperzouts of degree $2y$ and $m$ variables decomposes as  
\begin{equation}
\label{eq:hyperzout-basic-decomp}
\mathcal{B}_{m,2y}= \bigcup_{(d,k,\mu)\in \Omega_y^B}\bigcup_{n\geq 1} \eta \left (\mathcal{S}_{e,B}^{n,m,d,k,\mu} \right).
\end{equation}
This decomposition plays an important role in our analysis, in Section~\ref{sec: dimension analysis --- Degree-restricted Bezoutian certificates}, of the relationship between degree restricted hyperzouts and all non-negative homogeneous polynomials. 

The fact that the $(0,0)$ entry of $B_{p,u,v}(x)$ is the Wronskian of $p_{x,u}$ and $D_vp_{x,u}$ implies that hyperwrons are contained in the set of degree-restricted hyperzouts, i.e., $\mathcal{W}_{m,2y}\subseteq \mathcal{B}_{m,2y}\subseteq P_{m,2y}$.
This follows from the fact that 
\begin{equation}
    \label{eq:W-B-containment}
\eta\left(\mathcal{S}_{e,B}^{n,m,d,k,d}\right) \supseteq \Theta\left(\mathcal{S}_{e,W}^{n,m,d,k}\right),\end{equation}
which holds because $\eta(p,u,v,\phi,\xi) = \Theta(p,u,v,\phi)$ when $\xi(x) = (1,0,\ldots,0)\in T_{d,k}^{m,d}$.

\section{Relationship between hyperwrons and sums of squares}
\label{section: Relationship between hyperbolic certificates and Sum of squares certificates}
In this section, we show that every sum of squares is a hyperwron. In~\cite[Proposition 3.13]{Saunderson2019CertifyingOptimization} it was shown that any sum of squares is a hyperzout. Proposition~\ref{proposition: SOS is subset of Wronskian} shows that if $q$ is a sum of squares, then $q$ is a hyperwron generated by a hyperbolic polynomial of degree two. As noted in Section~\ref{section: introduction}, this result essentially appears in~\cite[Remark 1]{Shu2024AlgebraicOptimization}. We include a proof for completeness and to connect with the notation used in this paper.

\begin{proposition}
\label{proposition: SOS is subset of Wronskian}
Let $q\in \Sigma_{m,2y}$ be a sum of squares. Let $n \geq n'= \binom{y+m-1}{y}$ and let $e\in \R^{n}$ be non-zero. Then, there exists a quadratic hyperbolic polynomial $p\in \textup{Hyp}_{n,2}(e)$, a polynomial map $\phi:\R^m\rightarrow \R^{n}$ that is homogeneous of degree $y$,  and elements $u,v\in \Lambda_+(p,e)$ such that \[ q(x) = D_up(\phi(x))D_vp(\phi(x)) - D_{uv}^2p(\phi(x))p(\phi(x))\] for all $x\in \R^m$. 
Equivalently, $\Sigma_{m,2y} \subseteq \Theta \left (\mathcal{S}_{e,W}^{n,m,2,y} \right)$.

\end{proposition}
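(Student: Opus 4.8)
The plan is to reduce everything to one observation: for a suitable quadratic polynomial $p$ hyperbolic with respect to $e$, the Wronskian $D_ep(z)D_ep(z)-p(z)D_{ee}^2p(z)$ is a positive multiple of the squared Euclidean norm $\langle z,z\rangle$, and every sum of squares is the pullback of a squared norm along a homogeneous polynomial map of degree $s$. So there are really three ingredients: the Gram-matrix description of sums of squares, the construction of $p$ and computation of its Wronskian in the direction $e$, and a dimension-bookkeeping step that fuses the two.

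First I would record the standard Gram-matrix description. Writing $v_s\colon\R^m\to\R^{n'}$ for the vector of all $n'=\binom{s+m-1}{s}$ monomials of degree $s$, any $q\in\Sigma_{m,2s}$ can be written as $q(x)=v_s(x)^\intercal Q\,v_s(x)$ for some positive semidefinite $Q\in\Sym^{n'}$ (expand each square in the monomial basis to obtain such a $Q$; this is the only direction needed here). Factoring $Q=L^\intercal L$ with $L\in\R^{r\times n'}$ and $r=\operatorname{rank}(Q)\le n'$ gives $q(x)=\langle L\,v_s(x),L\,v_s(x)\rangle$.

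Next I would exhibit the quadratic hyperbolic polynomial. Given the non-zero $e\in\R^n$, set
\[
p(z)=\frac{2\langle e,z\rangle^2}{\|e\|^2}-\langle z,z\rangle .
\]
Then $p(e)=\|e\|^2>0$, and after changing to an orthonormal basis whose first vector is $e/\|e\|$ the form $p$ becomes the Lorentz form $w_1^2-w_2^2-\cdots-w_n^2$; hence $p\in\textup{Hyp}_{n,2}(e)$ and $e\in\Lambda_{++}(p,e)\subseteq\Lambda_+(p,e)$, so we may legitimately take $u=v=e$. A short computation gives $D_ep(z)=2\langle e,z\rangle$ and $D_{ee}^2p(z)=2\|e\|^2$, whence
\[
D_ep(z)\,D_ep(z)-p(z)\,D_{ee}^2p(z)=4\langle e,z\rangle^2-2\|e\|^2\Bigl(\tfrac{2\langle e,z\rangle^2}{\|e\|^2}-\langle z,z\rangle\Bigr)=2\|e\|^2\langle z,z\rangle .
\]

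Finally I would patch the pieces together. Define $\phi\colon\R^m\to\R^n$ by $\phi(x)=\frac{1}{\sqrt2\,\|e\|}\bigl(L\,v_s(x),\,0,\dots,0\bigr)$, padding with $n-r\ge n'-r\ge 0$ zero entries; each component is homogeneous of degree $s$, so $\phi\in F_{m,s}^n$, and $\langle\phi(x),\phi(x)\rangle=\tfrac{1}{2\|e\|^2}\langle L\,v_s(x),L\,v_s(x)\rangle=\tfrac{q(x)}{2\|e\|^2}$. Substituting $\phi(x)$ for $z$ in the Wronskian identity above yields $D_ep(\phi(x))\,D_ep(\phi(x))-p(\phi(x))\,D_{ee}^2p(\phi(x))=2\|e\|^2\langle\phi(x),\phi(x)\rangle=q(x)$, which is the desired hyperbolic-Wronskian certificate. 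Since $(p,e,e,\phi)\in\mathcal S_{e,W}^{n,m,2,s}$ and $\Theta(p,e,e,\phi)=q$ (here $d=2$ and $k=s$, so $2k(d-1)=2s$), this also proves the equivalent statement $\Sigma_{m,2s}\subseteq\Theta(\mathcal S_{e,W}^{n,m,2,s})$. No single step is a genuine obstacle; the only points needing care are the standard SOS $\Leftrightarrow$ positive-semidefinite-Gram-matrix equivalence, and the observation that the hypothesis $n\ge n'$ is exactly what makes the zero-padding of $\phi$ (hence the use of all of $F_{m,s}^n$) possible.
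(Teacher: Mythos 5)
Your proof is correct and follows essentially the same route as the paper: both take a Lorentz-type quadratic $p$ (yours is exactly twice the paper's, but the resulting Wronskian formula is equivalent), choose $u=v=e$, and build $\phi$ from an SOS/Gram decomposition with at most $n'$ components so that the pullback of the squared norm reproduces $q$. The only cosmetic differences are that you verify hyperbolicity by diagonalizing to the Lorentz form rather than checking the discriminant, and you invoke a Gram-matrix factorization $Q=L^\intercal L$ in place of the paper's citation that a sum of squares in $\Sigma_{m,2s}$ has a decomposition with at most $n'=\dim F_{m,s}$ terms.
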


\begin{proof}
Let $p(z) = \frac{1}{\|e\|^2}\langle e,z\rangle^2 - \frac{1}{2}\|z\|^2$.

We will first show that $p$ is hyperbolic with respect to $e$. 
We need to check that $p(e)>0$ and that $p(z+te)$ has $2$ real roots (counting multiplicity).

To see that $p(e)>0$ we note that 
    \begin{equation*}
        p(e)  = \frac{1}{\|e\|^2}\langle e,e\rangle^2 - \frac{1}{2}\|e\|^2
        =\|e\|^2-\frac{1}{2}\|e\|^2>0
    \end{equation*}
    since $e$ is non-zero by assumption.

To see that $p(z+te)$ has $2$ real roots, we check that the discriminant of this quadratic polynomial in $t$ is non-negative. Expanding in powers of $t$ gives
    \begin{align}
        p(z+te)&=\frac{1}{\|e\|^2}\langle e,z+te\rangle^2 - \frac{1}{2}\|z+te\|^2 \notag \\
        & = \left(\frac{1}{\|e\|^2}\langle e,z\rangle^2 - \frac{1}{2}\|z\|^2\right) + t\langle e,z\rangle + \frac{t^2}{2}\|e\|^2. \label{eq: p(z+te)}
    \end{align}
    The discriminant is 
    \begin{align*}
        &\langle e,z \rangle^2-2\|e\|^2\left(\frac{1}{\|e\|^2} \langle e,z \rangle^2-\frac{1}{2}\|z\|^2\right)\\
        &=-\langle e,z \rangle^2+\|z\|^2 \| e\|^2 \geq 0
    \end{align*}
    where we have used the Cauchy-Schwarz inequality. Therefore, all roots are real. 

Next we show that if $u=v=e$ we have that $D_up(z)D_vp(z) - D_{uv}^2p(z)p(z) = \frac{1}{2}\|e\|^2\|z\|^2$, so that the Wronskian is a sum of squares. From the expression for $p(z+te)$ in~\eqref{eq: p(z+te)} we see that 
\begin{align*}
D_ep(z) & = \left.\frac{d}{dt}p(z+te)\right|_{t=0} = \langle e,z\rangle\\
D_{ee}^2p(z) & = \left.\frac{d^2}{dt^2}p(z+te)\right|_{t=0} = \|e\|^2.
\end{align*}
A direct computation of the Wronskian gives
\begin{align}
    D_ep(z)^2-p(z)D_{ee}^2p(z) &=\langle e,z\rangle^2 - \left(\frac{1}{\|e\|^2}\langle e,z\rangle^2 - \frac{1}{2}\|z\|^2\right)\|e\|^2 \notag \\
    & = \frac{1}{2}\|e\|^2\|z\|^2\label{eq:sos-wron}.
\end{align}
Finally, since $q(x)$ is a sum of squares, we know that it has a sum of squares decomposition involving at most $\dim(F_{m,y}) = n'$ terms~\cite[Proposition 3.2]{Naldi2014NonnegativeNumber}. Therefore there exist $q_i\in F_{m,y}$ (for $i=1,2,\ldots,n'$) such that $q(x) = \sum_{i=1}^{n'}q_i(x)^2$. For $i=n'+1,\ldots,n$ let $q_i = 0\in F_{m,y}$. Let $\phi:\R^m\rightarrow \R^{n}$ be defined by
\[ \phi(x) = \frac{\sqrt{2}}{\|e\|}\begin{bmatrix}q_1(x)\\q_2(x)\\\vdots\\ q_{n}(x) \end{bmatrix}.\]
Then, from~\eqref{eq:sos-wron},
\[ D_up(\phi(x))D_vp(\phi(x)) - D_{uv}^2p(\phi(x))p(\phi(x)) = \sum_{i=1}^{n'}q_i(x)^2 = q(x),\]
completing the argument.
This shows $\Sigma_{m,2y} \subseteq \Theta \left(\mathcal{S}_{e,W}^{n,m,2,y} \right)$ whenever $n\geq n'$.

\end{proof}
It will be useful, in our later analysis, to understand families of hyperwrons and hyperzouts that are always sums of squares. We first include a useful fact about $2\times 2$ polynomial matrices.

\begin{lemma}
\label{lemma: degree 2 Bezoutian SOS decomposition}
 Let $p_2\in F_{n,2}$, $p_1\in F_{n,1}$, $p_0\in \R$ be such that $\left(\begin{smallmatrix}
        p_2(x) & p_1(x)\\  p_1(x) & p_0
    \end{smallmatrix}\right) \succeq 0$ for all $x\in \R^n$. Then there exists a $2 \times (n+1)$ matrix $M$ with polynomial entries such that \[M(x)M(x)^\intercal=\begin{pmatrix}
        p_2(x) & p_1(x)\\  p_1(x) & p_0
    \end{pmatrix}.\]
\end{lemma}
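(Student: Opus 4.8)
The plan is to argue that the claim reduces to finding a polynomial square root of a positive semidefinite polynomial matrix of the very special form given — one where the $(2,2)$ entry $p_0$ is a constant — and then to handle the two cases $p_0 > 0$ and $p_0 = 0$ separately, since the PSD condition forces $p_0 \geq 0$.

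First I would dispatch the case $p_0 > 0$. Here one can complete the square: write
\begin{equation*}
\begin{pmatrix} p_2(x) & p_1(x) \\ p_1(x) & p_0 \end{pmatrix} = \begin{pmatrix} 1 & p_1(x)/p_0 \\ 0 & 1 \end{pmatrix}\begin{pmatrix} p_2(x) - p_1(x)^2/p_0 & 0 \\ 0 & p_0\end{pmatrix}\begin{pmatrix} 1 & 0 \\ p_1(x)/p_0 & 1\end{pmatrix},
\end{equation*}
and the PSD hypothesis forces the Schur complement $s(x) := p_2(x) - p_1(x)^2/p_0$ to be a non-negative polynomial for all $x\in\R^n$. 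Moreover $s$ has degree at most $2$; being homogeneous and non-negative of degree $2$, it is a non-negative quadratic form, hence a sum of at most $n$ squares of linear forms, $s(x) = \sum_{i=1}^n \ell_i(x)^2$, by diagonalizing the associated symmetric matrix. (One must check degrees: $p_2(x) - p_1(x)^2/p_0$ is genuinely a polynomial because $p_0$ is a nonzero constant, and it is homogeneous of degree $2$ since both $p_2$ and $p_1^2$ are.) Assembling, with $M$ having rows $(\ell_1(x)/\sqrt{p_0}\cdot\sqrt{p_0},\ldots)$ — more precisely, taking the first row to be $(\ell_1(x),\ldots,\ell_n(x), p_1(x)/\sqrt{p_0})$ and the second row to be $(0,\ldots,0,\sqrt{p_0})$ — gives a $2\times(n+1)$ polynomial matrix with $MM^\intercal$ equal to the desired matrix. (One may need to rescale the square-root decomposition of $s$ to absorb the scalar $1/p_0$; this is routine.)

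The remaining case is $p_0 = 0$. Then PSD-ness of $\left(\begin{smallmatrix} p_2(x) & p_1(x) \\ p_1(x) & 0\end{smallmatrix}\right)$ forces $p_1(x) = 0$ for all $x$, hence $p_1 \equiv 0$, and the matrix is $\left(\begin{smallmatrix} p_2(x) & 0 \\ 0 & 0\end{smallmatrix}\right)$ with $p_2$ a non-negative quadratic form. Again write $p_2(x) = \sum_{i=1}^n \ell_i(x)^2$ and take $M$ to have first row $(\ell_1(x),\ldots,\ell_n(x),0)$ and second row $(0,\ldots,0)$.

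I expect the only real subtlety — not an obstacle so much as a point requiring care — to be the bookkeeping of degrees and the dimension count ensuring that $M$ can be taken to be $2\times(n+1)$: the Schur complement decomposition naturally produces up to $n$ linear forms (from diagonalizing an $n\times n$ Gram matrix) plus one extra column carrying the $p_1$/$\sqrt{p_0}$ and $\sqrt{p_0}$ entries, which is exactly $n+1$ columns. It is worth remarking that homogeneity is what makes the non-negative degree-$2$ form $s$ automatically a sum of squares of linear forms (no Hilbert obstruction in degree $2$), so the argument is genuinely elementary and does not invoke any deeper sum-of-squares machinery.
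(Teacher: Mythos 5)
Your proof is correct and follows essentially the same route as the paper's: an explicit $LDL^\intercal$ (Schur-complement) factorization in the case $p_0>0$, the observation that a non-negative quadratic form is automatically a sum of at most $n$ squares of linear forms, and the separate handling of the degenerate case $p_0=0$ where PSD-ness forces $p_1\equiv 0$. The stray parenthetical about "rescaling to absorb $1/p_0$" is unnecessary (the Schur complement $p_2-p_1^2/p_0$ is already what you decompose), but the explicit $M$ you then write down is exactly the paper's matrix and verifies directly.
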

\begin{proof}
First assume that $p_0>0$. We write 
    \begin{equation}
    \label{eq:m2factorization}
        \begin{pmatrix}
        p_2(x) & p_1(x)\\  p_1(x) & p_0
    \end{pmatrix}=\begin{pmatrix}
        1 & \frac{p_1(x)}{p_0}\\ 0 &1
    \end{pmatrix}\begin{pmatrix}
        p_2(x)-\frac{p_1^2(x)}{p_0}& 0\\ 0 &p_0
    \end{pmatrix}\begin{pmatrix}
        1 & \frac{p_1(x)}{p_0}\\ 0 &1
    \end{pmatrix}^\intercal.
    \end{equation}

Since $\left(\begin{smallmatrix} 1 & \frac{p_1(x)}{p_0}\\0&1\end{smallmatrix}\right)$ is invertible and the left hand side of~\eqref{eq:m2factorization} is positive semidefinite for all $x\in \R^n$,
it follows that $p_2(x)-p_1(x)^2/p_0 \geq 0$ for all $x\in \R^n$.
    As $p_2(x)-\frac{p_1^2(x)}{p_0}\in P_{n,2}$, it must be a sum of squares. Therefore, there exist $q_1,\ldots,q_n\in F_{n,1}$ such that $p_2(x)-\frac{p_1^2(x)}{p_0}=\Sigma_{i=1}^n q_i^2(x)$ for all $x$. The required matrix $M$ is then 
    \begin{equation*}
        \label{eq:Meqpositive}
    M(x)=\begin{pmatrix} 1 & \frac{p_1(x)}{p_0}\\0 & 1\end{pmatrix}\begin{pmatrix}
        q_1(x)&q_2(x)&\cdots&q_n(x)&0\\
        0&0&\cdots&0&\sqrt{p_0}
    \end{pmatrix} = \begin{pmatrix}
        q_1(x)&q_2(x)&\cdots&q_n(x)&\frac{p_1(x)}{\sqrt{p_0}}\\
        0&0&\cdots&0&\sqrt{p_0}
    \end{pmatrix}.
    \end{equation*}
    In the case where $p_0 = 0$, it must also be the case that $p_1(x) = 0$ for all $x$. Since $p_2\in P_{n,2}$ there exist $\tilde{q}_i\in F_{n,1}$ such that $p_2(x) = \sum_{i=1}^{n}\tilde{q}_i(x)^2$. Then we can simply take $M(x) = \left(\begin{smallmatrix} \tilde{q}_1(x) & \cdots& \tilde{q}_n(x)&0\\0 & \cdots & 0&0\end{smallmatrix}\right)$.
\end{proof}
Now, we summarize  relationships between sums of squares, certain hyperwrons, and certain hyperzouts. Note that the result of~\cite[Remark 1]{Shu2024AlgebraicOptimization} follows directly from Lemma~\ref{lem:sos-hyperwon-hyperzout}. 
\begin{lemma}
    \label{lem:sos-hyperwon-hyperzout}
    Let $m$, $n$ and $y$ be positive integers and let $e\in \R^n$ be non-zero. Then 
    \[ \Sigma_{m,2y} \supseteq \bigcup_{d\geq 2}\eta\left(\mathcal{S}_{e,B}^{n,m,d,y,2}\right) \supseteq \eta\left(\mathcal{S}_{e,B}^{n,m,2,y,2}\right) \supseteq \Theta\left(\mathcal{S}_{e,W}^{n,m,2,y}\right).\]
    Moreover, if $n \geq \binom{m-1+y}{y}$ then $\Theta\left(\mathcal{S}_{e,W}^{n,m,2,y}\right) = \Sigma_{m,2y}$.
\end{lemma}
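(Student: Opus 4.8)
The plan is to establish the chain of inclusions from right to left, using the structural facts already assembled in the excerpt, and then close the loop on the final equality using Proposition~\ref{proposition: SOS is subset of Wronskian}.

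First I would handle the two innermost inclusions, which are essentially bookkeeping. The inclusion $\eta(\mathcal{S}_{e,B}^{n,m,2,s,2}) \supseteq \Theta(\mathcal{S}_{e,W}^{n,m,2,s})$ is exactly the specialization of~\eqref{eq:W-B-containment} with $d = 2$ (so that $\mu = d = 2$): taking $\xi(x) = (1,0)$ recovers the $(0,0)$ entry of the parameterized B\'ezoutian, which is the Wronskian. The inclusion $\bigcup_{d\geq 2}\eta(\mathcal{S}_{e,B}^{n,m,d,s,2}) \supseteq \eta(\mathcal{S}_{e,B}^{n,m,2,s,2})$ is trivial since the left-hand side is a union that includes the $d=2$ term.

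Next I would prove the key inclusion $\Sigma_{m,2s} \supseteq \bigcup_{d\geq 2}\eta(\mathcal{S}_{e,B}^{n,m,d,s,2})$, which is where Lemma~\ref{lemma: degree 2 Bezoutian SOS decomposition} does the work. Fix $d\geq 2$ and an element of $\eta(\mathcal{S}_{e,B}^{n,m,d,s,2})$: a hyperbolic $p\in\textup{Hyp}_{n,d}(e)$, $u,v\in\Lambda_+(p,e)$, $\phi\in F_{m,s}^n$, and $\xi\in T_{2,s}^{m,d}$. Because $\mu = 2$, the vector $\xi$ has all but its last two blocks equal to zero, so $\xi^\intercal B_{p,u,v}(\phi(x))\xi$ only involves the bottom-right $2\times 2$ principal submatrix of $B_{p,u,v}$, whose entries are (reading off the degrees noted after~\eqref{eq:pBdef}) of degrees $2(d-1)-2(d-2) = 2$, $2(d-1)-(2d-3)=1$, and $2(d-1)-(2d-2)=0$ in $\phi(x)$. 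Since $B_{p,u,v}(y)\succeq 0$ for all $y$, this $2\times 2$ submatrix, evaluated at $y$, is a positive semidefinite matrix of the form $\left(\begin{smallmatrix} p_2(y)&p_1(y)\\ p_1(y)&p_0\end{smallmatrix}\right)$ with $p_2\in F_{n,2}$, $p_1\in F_{n,1}$, $p_0\in\R$. Lemma~\ref{lemma: degree 2 Bezoutian SOS decomposition} then supplies a polynomial matrix $M(y)$ with $M(y)M(y)^\intercal$ equal to this submatrix, so that $\xi(x)^\intercal B_{p,u,v}(\phi(x))\xi(x) = \|M(\phi(x))^\intercal \zeta(x)\|^2$ where $\zeta$ collects the last two (nonzero) blocks of $\xi$; expanding the squared norm writes the polynomial as a sum of squares of polynomials in $x$. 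Checking that the resulting summands lie in $F_{m,2s}$ (they do, by the degree accounting above together with $y = (\mu-1)s = s$) completes this step.

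Finally, for the equality $\Theta(\mathcal{S}_{e,W}^{n,m,2,s}) = \Sigma_{m,2s}$ when $n\geq\binom{m-1+s}{s}$: the inclusion $\subseteq$ follows from the chain just established (with $d=2$, every Wronskian hyperwron built from a quadratic is a sum of squares), and the reverse inclusion $\supseteq$ is precisely the content of Proposition~\ref{proposition: SOS is subset of Wronskian}, whose hypothesis on $n$ is exactly $n\geq n' = \binom{s+m-1}{s}$. I expect the main obstacle — really the only nontrivial point — to be the degree bookkeeping in the $\mu=2$ reduction: one must carefully verify that for arbitrary $d\geq 2$ the relevant $2\times 2$ corner of $B_{p,u,v}$ has entry-degrees $(2,1,0)$ in its argument, so that after composing with the degree-$s$ map $\phi$ and applying Lemma~\ref{lemma: degree 2 Bezoutian SOS decomposition} one genuinely lands in $\Sigma_{m,2s}$ rather than some other degree; everything else is assembling inclusions that have already been recorded.
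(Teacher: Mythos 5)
Your proof is correct and follows essentially the same route as the paper: the rightmost inclusion from~\eqref{eq:W-B-containment} with $d=2$, the middle inclusion by definition of the union, the leftmost inclusion by extracting the bottom-right $2\times 2$ block (degrees $2,1,0$) of $B_{p,u,v}$ and factoring it via Lemma~\ref{lemma: degree 2 Bezoutian SOS decomposition}, and the final equality from Proposition~\ref{proposition: SOS is subset of Wronskian} combined with the chain. The degree bookkeeping you flag as the ``only nontrivial point'' is exactly what the paper's proof also singles out, and your computation of the entry degrees is right.
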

\begin{proof}
    The right-most inclusion follows from~\eqref{eq:W-B-containment} with $d=2$. The middle inclusion is obvious from the definition of the union. The fact that $\Theta\left(\mathcal{S}_{e,W}^{n,m,2,y}\right)\supseteq \Sigma_{m,2y}$ holds when $n\geq \binom{m-1+y}{y}$ follows from Proposition~\ref{proposition: SOS is subset of Wronskian}.

    It remains to show that $\eta\left(\mathcal{S}_{e,B}^{n,m,d,y,2}\right)\subseteq \Sigma_{m,2y}$ whenever $d\geq 2$. To see why this is true, we note that any element $q\in \eta\left(\mathcal{S}_{e,B}^{n,m,d,y,2}\right)$ can be written in the form
    \[ q(x) = \begin{matrix}\begin{pmatrix} \xi_0 & \xi_y(x)\end{pmatrix}\\\phantom{a}\end{matrix}\begin{pmatrix} p_2(\phi(x)) & p_1(\phi(x))\\p_1(\phi(x)) & p_0\end{pmatrix}\begin{pmatrix} \xi_0 \\ \xi_y(x)\end{pmatrix}\;\;\textup{for all $x\in \R^m$}\]
    where $p_j\in F_{n,j}$ (for $j=0,1,2$), $\xi_j\in F_{m,j}$ (for $j=0,y$), and $\left(\begin{smallmatrix} p_2(z) & p_1(z)\\p_1(z) & p_0\end{smallmatrix}\right) \succeq 0$ for all $z\in \R^n$. This is because the bottom-right $2\times 2$ principal submatrix of a B\'ezoutian of the form $B_{p,u,v}(z)$ appearing in~\eqref{eq:bez0} always has entries that are homogeneous of degrees $2$, $1$, and $0$ respectively in $z$. But, by Lemma~\ref{lemma: degree 2 Bezoutian SOS decomposition}, there exists a matrix $M(z)$ with polynomial entries, such that $\left(\begin{smallmatrix}p_2(z) & p_1(z)\\p_1(z) & p_0\end{smallmatrix}\right) = M(z)M(z)^\intercal$. It follows that 
    \[q(x) = \left\|M(\phi(x))^\intercal \begin{pmatrix}\xi_0\\\xi_y(x)\end{pmatrix}\right\|^2\]
    is a sum of squares. 
\end{proof}

The fact that the set of sums of squares coincides with the hyperwrons (and also the hyperzouts) generated by hyperbolic polynomials of degree two will play an important role in our analysis in Sections~\ref{sec: dimension analysis --- Wronskian certificates} and~\ref{sec: dimension analysis --- Degree-restricted Bezoutian certificates}. Indeed, this observation will eventually allow us to focus on polynomials that are not sums of squares, and reduce to reasoning about hyperwrons and hyperzouts that are generated by hyperbolic polynomials of degree strictly greater than two.

\section{Dimension analysis}
\label{section: Dimension Analysis}

One way we might hope to show that there are non-negative polynomials that are not hyperwrons (or, indeed hyperzouts), is by comparing some notion of the size of the set of non-negative polynomials and the set of hyperwrons. Since $\mathcal{W}_{m,2y}\supseteq \Sigma_{m,2y}$, in the cases where $P_{m,2y} = \Sigma_{m,2y}$, we know that $\mathcal{W}_{m,2y} = P_{m,2y}$. Therefore, we will only find non-negative polynomials that are not hyperwrons in cases where there are non-negative polynomials that are not sums of squares. Since both $\mathcal{W}_{m,2y}$ and $P_{m,2y}$ contain $\Sigma_{m,2y}$, it follows that both sets have non-empty interior. Therefore, dimension, alone, cannot distinguish between non-negative polynomials and hyperwrons.

We could proceed by trying  to compare the semi-algebraic dimension (as defined in~\cite[Section 2.8]{Bochnak1998RealGeometry}) of the semi-algebraic set $P_{m,2y}\setminus \Sigma_{m,2y}$ with an appropriate notion of dimension for the set of all hyperwrons that are not sums of squares. However, it is not clear whether the latter set is even semi-algebraic since $\mathcal{W}_{m,2y}$ is described as an infinite union. 
%While Lemmas~\ref{lemma: P-sum semi-algebraic} and~\ref{lemma: subset-dimension} provide a well-defined notion of dimension to be used on the semi-algebraic set $P_{m,2y}\setminus \Sigma_{m,2y}$, the intrinsic infinite union in the construction of $\mathcal{W}_{m,2y}$ makes it a questionable practice to play the same trick on the set of hyperwrons without sums of squares.

Instead, to enable us to use the tools of semi-algebraic geometry, we will construct (in the proof of Theorem~\ref{theorem: Dimension bound of Wronskian from hyperbolic polynomial}) a semi-algebraic set $\Gamma_{\mathcal{W}_{m,2y}}$ that contains $\mathcal{W}_{m,2y}\setminus \Sigma_{m,2y}$, permitting a straightforward bound of the dimension of $\Gamma_{\mathcal{W}_{m,2y}}$. Our construction of $\Gamma_{\mathcal{W}_{m,2y}}$ takes the form
\begin{equation}
\label{eq:gamma-structure}
\Gamma = \bigcup_{i\in \Omega} \gamma_i(\R^{b_i}),
\end{equation} 
where $\Omega$ is a finite set, $(b_i)_{i\in \Omega}$ and $c$ are positive integers, and $\gamma_i:\R^{b_i}\rightarrow \R^c$ (for $i\in \Omega$) are semi-algebraic maps. This structure makes the dimension of $\Gamma$ straightforward to bound (see Proposition~\ref{proposition: dimension inequality non-inclusion}), and arises naturally from the decomposition of hyperwrons given in~\eqref{eq:hyperwron-basic-decomp}. We then establish conditions on $(m,2y)$ such that the dimension of $P_{m,2y}\setminus \Sigma_{m,2y}$ is strictly larger than the dimension of $\Gamma_{\mathcal{W}_{m,2y}}$, which in turn implies the existence of a non-negative polynomial that is not a hyperwron.

There are two main ideas behind the construction of the set $\Gamma_{\mathcal{W}_{m,2y}}$. The first is that we can obtain all hyperwrons that are not sums of squares by considering 
polynomials of the form $D_up(\phi(x))D_vp(\phi(x)) - p(\phi(x))D_{uv}^2p(\phi(x))$ where $p$ is hyperbolic in $n$ variables of degree $d\geq 3$, $\phi:\R^m\rightarrow \R^n$ is a homogeneous polynomial map of degree $k$, and $u,v\in \Lambda_+(p,e)$. In particular, we can exclude hyperwrons generated by hyperbolic polynomials of degree two, since these all give rise to sums of squares. The second key idea is based on the simple observation that  $D_up(\phi(x))D_vp(\phi(x)) - p(\phi(x))D_{uv}^2p(\phi(x))$ has the form $p_1(x)p_2(x)-p_3(x)p_4(x)$ where $p_1,p_2\in F_{m,(d-1)k}$, $p_3\in F_{m,dk}$ and $p_4\in F_{m,(d-2)k}$. Instead of trying to bound the dimension of hyperwrons directly, we can instead bound the dimension of expressions of the form $p_1p_2-p_3p_4$, where $p_1,p_2\in F_{m,(d-1)k}$, $p_3\in F_{m,dk}$ and $p_4\in F_{m,(d-2)k}$. In particular, the $p_i$ are polynomials in $m$ variables, even though $p$ has $n$ variables. This allows us to obtain bounds that are independent of $n$. 

We take a similar approach to understand cases in which there are non-negative polynomials that are not degree-restricted hyperzouts. We construct a semi-algebraic set $\Gamma_{\mathcal{B}_{m,2y}}$ that contains $\mathcal{B}_{m,2y}\setminus \Sigma_{m,2y}$ and that is a finite union of images of semi-algebraic maps. We then establish conditions on $(m,2y)$ such that the dimension of $P_{m,2y}\setminus \Sigma_{m,2y}$ is strictly larger than the dimension of $\Gamma_{\mathcal{B}_{m,2y}}$.

In Section~\ref{sec:dimension-basics}, we establish some basic facts about the dimension of semi-algebraic sets arising in our later arguments.
In Section~\ref{sec: dimension analysis --- Wronskian certificates} we focus on the construction of the set $\Gamma_{\mathcal{W}_{m,2y}}$ for the hyperwon case. In Section~\ref{sec: dimension analysis --- Degree-restricted Bezoutian certificates} we focus on the construction of the set
$\Gamma_{\mathcal{B}_{m,2y}}$ for the degree-restricted hyperzout case. In Section~\ref{sec:main-results} we establish sufficient conditions on $(m,2y)$, under which there are non-negative polynomials that are not hyperwons (respectively, degree-restricted hyperzouts).

\subsection{Preliminary facts about dimension of semi-algebraic sets}
\label{sec:dimension-basics}
The following simple observation bounds the dimension of semi-algebraic sets contained in a set with the same structural  form as $\Gamma_{\mathcal{W}_{m,2y}}$ or $\Gamma_{\mathcal{B}_{m,2y}}$ (defined in Sections~\ref{sec: dimension analysis --- Wronskian certificates} and~\ref{sec: dimension analysis --- Degree-restricted Bezoutian certificates}, respectively).
\begin{proposition}
\label{proposition: dimension inequality non-inclusion}
Let $\Omega$ be a finite set and let $c$ be a positive integer, and let $C\subseteq \R^c$ be a semi-algebraic set. For each $i\in \Omega$, let $b_i$ be a positive integer, let $\gamma_i:\R^{b_i} \rightarrow \R^c$ be a semi-algebraic map, and let $B_i \subseteq \R^{b_i}$ be an arbitrary set. If $C  \subseteq \bigcup_{i\in \Omega} \gamma_i(B_i)$, then $\dim(C)\leq \max_{i\in \Omega} b_i$.

\end{proposition}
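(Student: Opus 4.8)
The plan is to reduce the claim to two basic facts from semialgebraic geometry: the dimension of a finite union is the maximum of the dimensions, and the image of a semialgebraic set under a semialgebraic map has dimension at most that of the domain. First I would invoke monotonicity of dimension under inclusion (Lemma~\ref{lemma: subset-dimension}) to get $\dim(C) \leq \dim\!\left(\bigcup_{i\in\Omega}\gamma_i(B_i)\right)$; this requires knowing that the right-hand side is semialgebraic, which holds because each $B_i$, being an arbitrary subset of $\R^{b_i}$, can be replaced by $\R^{b_i}$ in an outer bound — indeed $\gamma_i(B_i)\subseteq \gamma_i(\R^{b_i})$, and $\bigcup_{i\in\Omega}\gamma_i(\R^{b_i})$ is semialgebraic by Lemma~\ref{lemma: union of semi-algebraic map}. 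So I would actually work with the chain $C \subseteq \bigcup_{i\in\Omega}\gamma_i(B_i)\subseteq \bigcup_{i\in\Omega}\gamma_i(\R^{b_i})$.

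Next I would apply the fact that dimension is preserved under finite unions, i.e.\ $\dim\!\left(\bigcup_{i\in\Omega}\gamma_i(\R^{b_i})\right) = \max_{i\in\Omega}\dim\!\left(\gamma_i(\R^{b_i})\right)$; this is the content of \cite[Proposition 2.8.5]{Bochnak1998RealGeometry}, already used in the proof of Lemma~\ref{lemma: subset-dimension}, applied inductively over the finite index set $\Omega$. Then, for each $i$, the image of a semialgebraic set under a semialgebraic map does not increase dimension: $\dim(\gamma_i(\R^{b_i})) \leq \dim(\R^{b_i}) = b_i$. This last inequality is the standard fact that semialgebraic maps cannot raise dimension (e.g.\ \cite[Theorem 2.8.8]{Bochnak1998RealGeometry}). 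Chaining these together gives $\dim(C) \leq \max_{i\in\Omega} b_i$, as desired.

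I do not anticipate a genuine obstacle here — the statement is essentially a packaging of textbook properties of semialgebraic dimension. The only point that needs a little care is making sure we never claim $\gamma_i(B_i)$ itself is semialgebraic (it need not be, since $B_i$ is arbitrary): the argument must route through the containment $\gamma_i(B_i)\subseteq\gamma_i(\R^{b_i})$ and apply monotonicity of dimension to the semialgebraic superset. A second minor point is the (harmless) possibility that $\Omega$ is empty, in which case $C\subseteq\emptyset$ forces $\dim(C) = \dim(\emptyset) = -\infty \leq \max_{\emptyset} b_i$ under the usual conventions; I would either note this or implicitly assume $\Omega\neq\emptyset$ as is clearly intended in the applications.
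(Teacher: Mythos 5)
Your argument is essentially identical to the paper's: both replace each $\gamma_i(B_i)$ by the semialgebraic superset $\gamma_i(\R^{b_i})$, apply monotonicity of dimension (Lemma~\ref{lemma: subset-dimension}), use that the dimension of a finite union of semialgebraic sets is the maximum of the dimensions, and finish with the fact that semialgebraic images cannot raise dimension. Your remark that $\gamma_i(B_i)$ itself need not be semialgebraic is a nice clarification of why the argument routes through $\R^{b_i}$, but it does not change the substance.
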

\begin{proof}
Since $C\subseteq \bigcup_{\substack{i\in \Omega}} \gamma_i(B_i)$ and $B_i\subseteq \R^{b_i}$ for all $i\in \Omega$, it follows that 
\begin{equation*}
    C\subseteq \bigcup_{\substack{i\in \Omega}} \gamma_i \left(\R^{b_{i}} \right).  
    \label{eq: assumption in proposition dimension inequality non-inclusion proof}
\end{equation*}
Since $\gamma_i\left(\R^{b_{i}} \right)$ is semi-algebraic and the finite union of semi-algebraic sets is semi-algebraic (Lemma~\ref{lemma: union of semi-algebraic map}), $\bigcup_{i\in\substack{\Omega}} \gamma_i\left(\R^{b_{i}} \right)$ is semi-algebraic. By Lemma~\ref{lemma: subset-dimension} and the fact that dimension of a finite union of semi-algebraic sets is the maximum of the dimensions of the constituent sets~\cite[Proposition 2.8.5]{Bochnak1998RealGeometry}, we have \[\dim (C) \leq \dim\left(\bigcup_{\substack{i\in \Omega}} \gamma_i\left(\R^{b_{i}} \right)\right)= \max_{\substack{i\in \Omega}}\dim  \gamma_i\left(\R^{b_{i}} \right).\]

The proposition follows from the fact that $\dim  \gamma_i\left(\R^{b_{i}} \right) \leq  \dim \R^{b_{i}}=b_i$,  \cite[Theorem 2.8.8, Proposition 2.8.4]{Bochnak1998RealGeometry}. 
\end{proof}

The other set that plays a key role in our dimension-based argument is $P_{m,2y}\setminus\Sigma_{m,2y}$, the set of non-negative homogeneous polynomials that are not sums of squares. 
Lemmas~\ref{lemma: Pm2y not SOS non-empty} and~\ref{lemma: dimension equality} together show that if there is a non-negative polynomial that is not a sum of squares, then $P_{m,2y}\setminus\Sigma_{m,2y}$ is full-dimensional in all homogeneous polynomials of degree $2y$ in $m$ variables. Although this is a well-known fact, we include a proof for completeness.

\begin{lemma}
\label{lemma: Pm2y not SOS non-empty}
If $P_{m,2y}\setminus \Sigma_{m,2y}$ is non-empty, it has a non-empty interior.
\end{lemma}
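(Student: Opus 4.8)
The plan is to show that $P_{m,2y}\setminus\Sigma_{m,2y}$, when non-empty, contains an open subset of $F_{m,2y}$. First I would fix $q\in P_{m,2y}\setminus\Sigma_{m,2y}$, so $q$ is non-negative but not a sum of squares. Pick a strictly positive definite form $r\in \Sigma_{m,2y}$, for instance $r(x) = (x_1^2+\cdots+x_m^2)^y$; note $r(x)>0$ for all $x\ne 0$, and $r$ lies in the interior of both $\Sigma_{m,2y}$ and $P_{m,2y}$. The key observation is that for sufficiently small $\varepsilon>0$, the perturbation $q_\varepsilon := q + \varepsilon r$ is still not a sum of squares — indeed, if $q_\varepsilon \in \Sigma_{m,2y}$, then subtracting would require $q = q_\varepsilon - \varepsilon r$, but this alone does not give a contradiction; instead the right statement is that $q$ itself already fails to be SOS, and I want to push this failure into an open neighbourhood.

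A cleaner route: since $\Sigma_{m,2y}$ is a closed convex cone (it is the image of the PSD cone under a linear map, hence closed, or one can cite that it is a projected spectrahedron and closed), and $q\notin \Sigma_{m,2y}$, by the separating hyperplane theorem there is a linear functional $\ell$ on $F_{m,2y}$ with $\ell(q) < 0 \le \ell(s)$ for all $s\in\Sigma_{m,2y}$. Then the open half-space $\{f\in F_{m,2y} : \ell(f)<0\}$ is disjoint from $\Sigma_{m,2y}$ and contains $q$. It remains to intersect this open set with the interior of $P_{m,2y}$: since $P_{m,2y}$ has non-empty interior (it contains the positive definite form $r$ in its interior), and $q$ lies on the boundary or interior of $P_{m,2y}$ but in any case the point $q_\delta := q+\delta r$ lies in the interior of $P_{m,2y}$ for every $\delta>0$, and $\ell(q_\delta) = \ell(q) + \delta\,\ell(r)$ is still negative for $\delta$ small enough (as $\ell(q)<0$). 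So $q_\delta$ lies in the \emph{interior} of $P_{m,2y}$ and in the open set $\{\ell<0\}$ avoiding $\Sigma_{m,2y}$; hence a whole neighbourhood of $q_\delta$ lies in $P_{m,2y}\setminus\Sigma_{m,2y}$, proving it has non-empty interior.

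I would carry out the steps in this order: (1) recall $\Sigma_{m,2y}$ is closed and convex and $P_{m,2y}$ is closed convex with non-empty interior (it contains $(x_1^2+\cdots+x_m^2)^y$ as an interior point, which is easy to verify directly); (2) given $q\in P_{m,2y}\setminus\Sigma_{m,2y}$, apply separation to get $\ell$ with $\ell(q)<0\le\ell|_{\Sigma_{m,2y}}$; (3) form $q_\delta = q+\delta(x_1^2+\cdots+x_m^2)^y$, check $q_\delta\in\operatorname{Int}(P_{m,2y})$ and $\ell(q_\delta)<0$ for small $\delta$; (4) conclude an open ball around $q_\delta$ is contained in $\operatorname{Int}(P_{m,2y})\cap\{\ell<0\}\subseteq P_{m,2y}\setminus\Sigma_{m,2y}$.

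The main obstacle — really the only point needing a little care — is step (3): verifying that $q_\delta$ is genuinely an interior point of $P_{m,2y}$. This follows because $q\ge 0$ everywhere and $(x_1^2+\cdots+x_m^2)^y$ is strictly positive on the unit sphere, so $q_\delta$ is bounded below by $\delta\min_{\|x\|=1}(x_1^2+\cdots+x_m^2)^y = \delta > 0$ on the sphere, and by homogeneity any form $C^0$-close enough to $q_\delta$ remains non-negative; a compactness argument on the sphere makes this precise. Everything else is a direct application of the separating hyperplane theorem in the finite-dimensional vector space $F_{m,2y}$.
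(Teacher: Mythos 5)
Your argument is correct, and the core idea (perturbing $q$ by a small multiple of the positive definite form $r=(x_1^2+\cdots+x_m^2)^y$ to land in $\operatorname{Int}(P_{m,2y})$ while staying outside $\Sigma_{m,2y}$) is the same as the paper's. The difference is in how you certify that the perturbation stays outside $\Sigma_{m,2y}$: you invoke the separating hyperplane theorem for the closed \emph{convex} cone $\Sigma_{m,2y}$, obtaining a linear functional $\ell$ with $\ell(q)<0\le\ell|_{\Sigma_{m,2y}}$, and then check that $\ell(q+\delta r)<0$ for small $\delta$ (which uses $r\in\Sigma_{m,2y}$, hence $\ell(r)\ge 0$). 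The paper dispenses with convexity and separation entirely: since $\Sigma_{m,2y}$ is closed, its complement is open, so a small ball around $\hat q$ already misses $\Sigma_{m,2y}$, and one just picks the perturbation size so that $\hat q+\tfrac{\varepsilon}{2}r$ remains in that ball. Your route is a touch heavier (it needs convexity and the separation theorem, not just closedness), but it has the modest advantage of producing an explicit open \emph{half-space} avoiding $\Sigma_{m,2y}$, which would be reusable if one wanted quantitative control; the paper's route is more elementary. One stylistic note: the first paragraph of your write-up is an openly abandoned first attempt — you correctly recognize that ``$q+\varepsilon r$ not SOS'' does not follow just from $q$ not being SOS — so for a clean presentation you would simply omit it and start from the separation step.
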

\begin{proof}
Let $\hat{q}\in P_{m,2y}\setminus \Sigma_{m,2y}$, then $\hat{q}\in \Sigma_{m,2y}^c$, where $\Sigma_{m,2y}^c$ denotes the complement of the set $\Sigma_{m,2y}$ in $F_{m,2y}$. Since $\Sigma_{m,2y}$ is closed, $\Sigma_{m,2y}^c$ is open. As a result, there exists $\varepsilon >0$ such that $B(\hat{q};\varepsilon)\subseteq \Sigma_{m,2y}^c$, where $B(\hat{q};\varepsilon)$ is the open ball
\[B(\hat{q};\varepsilon)=\left\{q\in F_{m,2y}: \max_{\substack{x\in S^{n-1}}} |q(x)-\hat{q}(x)|<\varepsilon \right\},\]
where $S^{n-1}$ is the unit sphere in $\mathbb{R}^n$.

Consider $q(x)=\hat{q}(x)+\frac{\varepsilon}{2}(x_1^2+\dots+x_n^2)^y$. Observe that $q\in B(\hat{q};\varepsilon)\subseteq \Sigma_{m,2y}^c$. Also, since $q(x)\geq \frac{\varepsilon}{2} >0$ for all $x\in S^{n-1}$ it follows that $q\in \text{int } \left(P_{m,2y} \right)$. This implies 
\begin{equation*}
    q\in  \text{int}(P_{m,2y}) \cap \Sigma_{m,2y}^c
    \subseteq \text{int} \left(P_{m,2y}\setminus \Sigma_{m,2y} \right).
\end{equation*}
Here the inclusion holds because $\textup{int}(P_{m,2y})\cap \Sigma_{m,2y}^c$ is an open set contained in $P_{m,2y}\setminus \Sigma_{m,2y}$. 
This shows the interior of $P_{m,2y}\setminus \Sigma_{m,2y}$ is non-empty.
\end{proof}

\begin{lemma}
\label{lemma: dimension equality}
If $m >2,2y >2$ and $(m,2y)\neq (3,4)$ then 
\[\dim P_{m,2y}=\dim (P_{m,2y}\setminus \Sigma_{m,2y})=\dim F_{m,2y}=\binom{m+2y-1}{2y}.\]
\end{lemma}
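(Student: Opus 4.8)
\textbf{Proof proposal for Lemma~\ref{lemma: dimension equality}.}

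The plan is to derive the chain of equalities by squeezing $\dim(P_{m,2y}\setminus\Sigma_{m,2y})$ between $\dim F_{m,2y}$ and itself, using monotonicity of dimension for semi-algebraic sets together with the fact that $P_{m,2y}\setminus\Sigma_{m,2y}$ has non-empty interior in $F_{m,2y}$ under the stated hypotheses. First I would observe that $P_{m,2y}\setminus\Sigma_{m,2y}\subseteq P_{m,2y}\subseteq F_{m,2y}$, so by Lemma~\ref{lemma: subset-dimension} we immediately get
\[\dim(P_{m,2y}\setminus\Sigma_{m,2y})\leq \dim P_{m,2y}\leq \dim F_{m,2y}=\binom{m+2y-1}{2y},\]
where the last equality is just the dimension of the ambient vector space $F_{m,2y}$, viewed as $\R^{\binom{m+2y-1}{2y}}$ (this uses, e.g., \cite[Theorem 2.8.8]{Bochnak1998RealGeometry}, that an open subset of $\R^N$ has dimension $N$, or simply that $\dim \R^N = N$).

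Next I would establish the reverse inequality $\dim(P_{m,2y}\setminus\Sigma_{m,2y})\geq \binom{m+2y-1}{2y}$. The key input is that the hypotheses $m>2$, $2y>2$, and $(m,2y)\neq(3,4)$ are exactly the cases \emph{not} covered by Hilbert's theorem (Theorem~\ref{theorem: Hilbert}); hence $P_{m,2y}\neq\Sigma_{m,2y}$, so $P_{m,2y}\setminus\Sigma_{m,2y}$ is non-empty. By Lemma~\ref{lemma: Pm2y not SOS non-empty}, a non-empty $P_{m,2y}\setminus\Sigma_{m,2y}$ has non-empty interior in $F_{m,2y}$. A non-empty open subset of $\R^N$ with $N=\binom{m+2y-1}{2y}$ has dimension $N$, again by \cite[Theorem 2.8.8]{Bochnak1998RealGeometry}; and by Lemma~\ref{lemma: subset-dimension} the dimension of $P_{m,2y}\setminus\Sigma_{m,2y}$ is at least the dimension of any subset, in particular this open ball. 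This gives $\dim(P_{m,2y}\setminus\Sigma_{m,2y})\geq \binom{m+2y-1}{2y}$.

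Combining the two inequalities forces all the terms in the chain to equal $\binom{m+2y-1}{2y}$, which is the claim. I do not anticipate a serious obstacle here: the only subtlety is making sure we correctly identify that the hypotheses put us outside all three Hilbert cases (so that Lemma~\ref{lemma: Pm2y not SOS non-empty} is applicable via non-emptiness), and citing the right basic facts about semi-algebraic dimension (monotonicity under inclusion, and that an open subset of $\R^N$ has full dimension $N$) — both of which are already recorded or referenced in the excerpt. One small bookkeeping point worth stating explicitly is the identification of $F_{m,2y}$ with $\R^{\binom{m+2y-1}{2y}}$ via the monomial basis, so that "dimension" as a semi-algebraic set coincides with the vector-space dimension; this is routine.
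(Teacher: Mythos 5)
Your proposal is correct and follows essentially the same route as the paper: invoke Hilbert's theorem to get non-emptiness, apply Lemma~\ref{lemma: Pm2y not SOS non-empty} to obtain non-empty interior, and then squeeze the dimensions using monotonicity and the full dimension of a non-empty open semi-algebraic subset of $\R^N$. The only cosmetic differences are that the paper cites \cite[Proposition 2.8.4]{Bochnak1998RealGeometry} (rather than Theorem 2.8.8) for the full-dimensionality of open semi-algebraic sets, and it also explicitly records that the interior of a semi-algebraic set is itself semi-algebraic via \cite[Proposition 2.2.2]{Bochnak1998RealGeometry}, a small point you could add for completeness.
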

\begin{proof}

Since $m >2,2y >2$ and $(m,2y)\neq (3,4)$, the set $P_{m,2y}\setminus \Sigma_{m,2y}$ is non-empty by Theorem~\ref{theorem: Hilbert}. By Lemma~\ref{lemma: Pm2y not SOS non-empty}, $P_{m,2y}\setminus \Sigma_{m,2y}$ has a non-empty interior. 

By~\cite[Proposition 2.2.2]{Bochnak1998RealGeometry}, if $S$ is semi-algebraic, so is the interior of $S$, $\text{int}\,S$. Denote $\R^c$ as the ambient space of $P_{m,2y}$. By~\cite[Proposition 2.8.4]{Bochnak1998RealGeometry}, a non-empty open semi-algebraic subset $U$ of $\R^n$ has $\dim(U)=n$. Combining it with Lemma~\ref{lemma: subset-dimension}, we have
\[\dim \R^c =\dim (\text{int} (P_{m,2y}\setminus \Sigma_{m,2y}))\leq \dim (P_{m,2y}\setminus \Sigma_{m,2y} )\leq \dim P_{m,2y} \leq \dim \R^c .\] We deduce from here that $\dim \R^c =\dim (P_{m,2y}\setminus \Sigma_{m,2y}) = \dim P_{m,2y}$. Since $P_{m,2y}$ is full dimensional in $F_{m,2y}$~\cite[Exercise 4.2]{2012SemidefiniteGeometry}, it follows that $\dim P_{m,2y}=\dim (P_{m,2y}\setminus \Sigma_{m,2y})=\dim F_{m,2y}$.
\end{proof}

\subsection{Wronskian certificates}
\label{sec: dimension analysis --- Wronskian certificates}
In this section, we construct a semi-algebraic set $\Gamma_{\mathcal{W}_{m,2y}}$ of the form~\eqref{eq:gamma-structure} that contains all hyperwrons which are not sums of squares. This leads to a sufficient condition for the existence of a non-negative homogeneous polynomial that is not a hyperwron.

We begin by defining a subset of hyperwrons that contains all hyperwrons which are not sums of squares. Recall from~\eqref{eq:hyperwron-basic-decomp} that the set of hyperwrons decomposes as
\[ \mathcal{W}_{m,2y}= \bigcup_{(d,k)\in \Omega_y^W} \bigcup_{n\geq 1}\Theta(\mathcal{S}_{e,W}^{n,m,d,k})\]
where $\Omega_{y}^{W} = \{(d,k)\in \mathbb{N}^2\;:\;(d-1)k = y\}$ and $\Theta$ is defined in~\eqref{eq:theta-def}.
Let
\begin{equation}
    \label{eq: Tilde OmegayW}
    \tilde{\Omega}_{y}^W = \{(d,k)\in {\Omega}_y^W\;:\; d\neq 2\}
\end{equation}
and define
\begin{equation}
\tilde{\mathcal{W}}_{m,2y}=\bigcup_{(d,k)\in \tilde{\Omega}_y^W}\bigcup_{n\geq 1}  \Theta \left(\mathcal{S}_{e,W}^{n,m,d,k} \right).
\label{eq: Tilde Wronskian collection definition}    
\end{equation}
This is the set of hyperwrons generated by hyperbolic polynomials of degree strictly greater than two. By Proposition~\ref{proposition: SOS is subset of Wronskian}, the set $\tilde{\mathcal{W}}_{m,2y}$ contains all hyperwrons that are not sums of squares. 

\begin{lemma}
\label{lem:Wtilde-property}
    Let $\tilde{\mathcal{W}}_{m,2y}$ be defined in~\eqref{eq: Tilde Wronskian collection definition}. Then, $\mathcal{W}_{m,2y}\setminus \Sigma_{m,2y}\subseteq \tilde{\mathcal{W}}_{m,2y}$.
\end{lemma}
\begin{proof}
    Let $q\in \mathcal{W}_{m,2y}\setminus \Sigma_{m,2y}$. Then 
    \[q\in \mathcal{W}_{m,2y} = \tilde{\mathcal{W}}_{m,2y} \cup \left( \bigcup_{n\geq 1} \Theta\left(\mathcal{S}_{e,W}^{n,m,2,y}\right)\right)\subseteq \tilde{\mathcal{W}}_{m,2y} \cup \Sigma_{m,2y},\]
    where the inclusion holds because any hyperwron generated from a hyperbolic polynomial of degree two is a sum of squares by Lemma~\ref{lemma: degree 2 Bezoutian SOS decomposition}. Since $q\notin \Sigma_{m,2y}$ by assumption, it follows that $q\in \tilde{\mathcal{W}}_{m,2y}$.
\end{proof}
Our aim, now, is to construct a set $\Gamma_{\mathcal{W}_{m,2y}}$ that is a finite union of images of semi-algegraic maps, (i.e., of the form~\eqref{eq:gamma-structure}) that contains $\tilde{\mathcal{W}}_{m,2y}$. To do this we use the simple, but crucial, observation that the map $\Theta$, defined in~\eqref{eq:theta-def} factors through a (low-dimensional) space, the dimension of which is \emph{independent of $n$}. 

\begin{lemma}
    \label{lem:sa-cover-W}
    Let $m$ and $y$ be positive integers and let $(d,k)\in \Omega_{y}^W$. Then 
    \[ \Theta\left(\mathcal{S}_{e,W}^{n,m,d,k}\right) \subseteq \Theta_1 \left(F_{m,(d-1)k}\times F_{m,(d-1)k}\times F_{m,dk}\times F_{m,(d-2)k}\right)\]
    where $\Theta_1:F_{m,(d-1)k}\times F_{m,(d-1)k}\times F_{m,dk}\times F_{m,(d-2)k}\rightarrow F_{m,2y}$, defined by 
    $\Theta_1(p_1,p_2,p_3,p_4) = p_1p_2 - p_3p_4$, is a semi-algebraic map.
\end{lemma}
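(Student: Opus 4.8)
The plan is to prove the two assertions of the lemma separately: first, that every element of $\Theta(\mathcal{S}_{e,W}^{n,m,d,k})$ can be written as $p_1p_2-p_3p_4$ with the prescribed degrees; and second, that $\Theta_1$ is a semi-algebraic map. For the inclusion, I would take an arbitrary element $q = \Theta(p,u,v,\phi)$ with $p\in\textup{Hyp}_{n,d}(e)$, $u,v\in\Lambda_+(p,e)$, and $\phi\in F_{m,k}^n$, so that $q(x) = D_up(\phi(x))\,D_vp(\phi(x)) - p(\phi(x))\,D_{uv}^2p(\phi(x))$, and simply unpack it. Set $p_1 = (D_up)\circ\phi$, $p_2 = (D_vp)\circ\phi$, $p_3 = p\circ\phi$, and $p_4 = (D_{uv}^2p)\circ\phi$, so that $q = p_1p_2 - p_3p_4 = \Theta_1(p_1,p_2,p_3,p_4)$.

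The substance of the first part is then the degree bookkeeping. Since $p\in F_{n,d}$, differentiation gives $D_up, D_vp\in F_{n,d-1}$ and $D_{uv}^2p = D_uD_vp\in F_{n,d-2}$ (with the convention $F_{n,0}=\R$ when $d=2$). Composing a form of degree $j$ with the homogeneous degree-$k$ polynomial map $\phi$ produces a form of degree $jk$, because each degree-$j$ monomial becomes a product of $j$ factors each of degree $k$. Hence $p_1,p_2\in F_{m,(d-1)k}$, $p_3\in F_{m,dk}$, and $p_4\in F_{m,(d-2)k}$, which are exactly the four domain factors in the statement. Using $(d-1)k = y$, both product terms have degree $2y$: indeed $(d-1)k+(d-1)k = 2y$ and $dk + (d-2)k = 2(d-1)k = 2y$, so $\Theta_1$ is a well-defined map into $F_{m,2y}$ and $q\in\Theta_1(F_{m,(d-1)k}\times F_{m,(d-1)k}\times F_{m,dk}\times F_{m,(d-2)k})$. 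Since $q$ was arbitrary, the claimed inclusion follows.

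For semi-algebraicity of $\Theta_1$, I would identify each $F_{m,j}$ with $\R^{N_j}$, $N_j = \binom{m+j-1}{j}$, via the coefficient vector in the monomial basis. Multiplication of forms is bilinear in coefficients, so the coefficients of $p_1p_2$ and of $p_3p_4$ are polynomial functions of the coefficients of the $p_i$; thus $\Theta_1$ is a polynomial map between Euclidean spaces. Every polynomial map is semi-algebraic, since its graph is the common zero set of finitely many polynomials and hence semi-algebraic by Definition~\ref{definition: semi-algebraic set} (cf. Definition~\ref{definition: semi-algebraic function}). This completes the argument.

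I do not anticipate a genuine obstacle here: the proof is essentially bookkeeping, and the only points requiring a little care are recalling that a directional derivative lowers degree by one, that composition with a degree-$k$ homogeneous map multiplies degrees by $k$, and the arithmetic identity $dk+(d-2)k = 2(d-1)k = 2y$ that makes $\Theta_1$ land in $F_{m,2y}$. The conceptually important feature — used crucially in the subsequent dimension estimates — is that the four factors $p_1,\dots,p_4$ are forms in $m$ variables, so the domain of $\Theta_1$ has dimension independent of $n$.
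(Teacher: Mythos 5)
Your proof is correct and takes essentially the same route as the paper's: you implicitly factor $\Theta$ through the map $(p,u,v,\phi)\mapsto((D_up)\circ\phi,(D_vp)\circ\phi,p\circ\phi,(D_{uv}^2p)\circ\phi)$, carry out the same degree bookkeeping, and establish semi-algebraicity of $\Theta_1$ by observing its graph is cut out by polynomial (indeed bilinear/quadratic) coefficient equations. No gaps.
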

\begin{proof}
    The map $\Theta$ factors as $\Theta = \Theta_1\circ \Theta_2$ where
    $\Theta_2:F_{n,d}\times \R^n\times \R^n\times F_{m,k}^n \rightarrow F_{m,(d-1)k}\times F_{m,(d-1)k}\times F_{m,dk}\times F_{m,(d-2)k}$ is defined by
    \[ \Theta_2(p,u,v,\phi) = (D_up \circ \phi, D_vp \circ \phi, p \circ \phi, D^2_{uv}p \circ \phi).\]
    Since $\Theta_2\left(\mathcal{S}_{e,W}^{n,m,d,k}\right)\subseteq F_{m,(d-1)k}\times F_{m,(d-1)k}\times F_{m,dk}\times F_{m,(d-2)k}$ it follows directly that 
\[ \Theta\left(\mathcal{S}_{e,W}^{n,m,d,k}\right) \subseteq \Theta_1\left(F_{m,(d-1)k}\times F_{m,(d-1)k}\times F_{m,dk}\times F_{m,(d-2)k}\right).\]
    To see that $\Theta_1$ is a semi-algebraic map, we note that its graph is 
    \[ \{(p_1,p_2,p_3,p_4,q)\in F_{m,(d-1)k}\times F_{m,(d-1)k}\times F_{m,dk}\times F_{m,(d-2)k} \times F_{m,2y}\;:\; q = p_1p_2 - p_3p_4\}.\]
    This is a semi-algebraic set since it is defined by the common solution of $\dim(F_{m,2y}) = \binom{2y+m-1}{2y}$ quadratic equations, obtained by equating the coefficients on the left and right hand sides of the polynomial identity $q = p_1p_2 - p_3p_4$. 
\end{proof}

We are now in a position to give a sufficient condition that implies the existence of a non-negative polynomial that is not a hyperwron.
\begin{theorem}
\label{theorem: Dimension bound of Wronskian from hyperbolic polynomial}
Suppose $m>2$, $2y>2$ and $(m,2y)\neq (3,4)$. There exists a non-negative homogeneous polynomial of degree $2y$ in $m$ variables that is not a hyperwron whenever
\begin{align}
        &\dim{P_{m,2y}}=\binom{2y+m-1}{2y} \notag \\ 
        &>\max_{(d,k)\in \Tilde{\Omega}_{y}^W} 2\binom{m+(d-1)k-1}{(d-1)k}+\binom{m+dk-1}{dk}+\binom{m+(d-2)k-1}{(d-2)k},  \label{eq: hyperbolic-Wronskian inequality}
\end{align}
where  
$\Tilde{\Omega}_{y}^W$ is defined in~\eqref{eq: Tilde OmegayW}. 
\end{theorem}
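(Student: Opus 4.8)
The plan is to assemble the pieces already developed in the excerpt into a dimension-counting argument. First I would recall from Lemma~\ref{lem:Wtilde-property} that $\mathcal{W}_{m,2y}\setminus \Sigma_{m,2y}\subseteq \tilde{\mathcal{W}}_{m,2y}$, so it suffices to exhibit a non-negative polynomial that is not in $\tilde{\mathcal{W}}_{m,2y}$. Next I would use the decomposition~\eqref{eq: Tilde Wronskian collection definition} together with Lemma~\ref{lem:sa-cover-W} to cover $\tilde{\mathcal{W}}_{m,2y}$ by a finite union of images of semi-algebraic maps: for each $(d,k)\in \tilde{\Omega}_y^W$ and each $n\geq 1$, $\Theta(\mathcal{S}_{e,W}^{n,m,d,k})\subseteq \Theta_1(F_{m,(d-1)k}\times F_{m,(d-1)k}\times F_{m,dk}\times F_{m,(d-2)k})$, and crucially the right-hand side does not depend on $n$. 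Hence, setting
\[
\Gamma_{\mathcal{W}_{m,2y}} = \bigcup_{(d,k)\in \tilde{\Omega}_y^W} \Theta_1\!\left(F_{m,(d-1)k}\times F_{m,(d-1)k}\times F_{m,dk}\times F_{m,(d-2)k}\right),
\]
we obtain $\tilde{\mathcal{W}}_{m,2y}\subseteq \Gamma_{\mathcal{W}_{m,2y}}$, and $\Gamma_{\mathcal{W}_{m,2y}}$ is a finite union of images of semi-algebraic maps of the form~\eqref{eq:gamma-structure} (finiteness of $\tilde{\Omega}_y^W$ follows since $(d-1)k=y$ forces only finitely many factorizations).

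Then I would apply Proposition~\ref{proposition: dimension inequality non-inclusion}: each map $\Theta_1$ in the union has domain of dimension $2\binom{m+(d-1)k-1}{(d-1)k}+\binom{m+dk-1}{dk}+\binom{m+(d-2)k-1}{(d-2)k}$, so $\dim \Gamma_{\mathcal{W}_{m,2y}}$ is at most the maximum of these quantities over $(d,k)\in \tilde{\Omega}_y^W$, i.e., at most the right-hand side of~\eqref{eq: hyperbolic-Wronskian inequality}. On the other hand, since $m>2$, $2y>2$ and $(m,2y)\neq(3,4)$, Lemma~\ref{lemma: dimension equality} gives $\dim(P_{m,2y}\setminus \Sigma_{m,2y}) = \dim P_{m,2y} = \binom{2y+m-1}{2y}$, which is the left-hand side of~\eqref{eq: hyperbolic-Wronskian inequality}.

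Finally, under hypothesis~\eqref{eq: hyperbolic-Wronskian inequality}, $\dim(P_{m,2y}\setminus \Sigma_{m,2y}) > \dim \Gamma_{\mathcal{W}_{m,2y}} \geq \dim \tilde{\mathcal{W}}_{m,2y}$. By Lemma~\ref{lemma: subset-dimension}, $P_{m,2y}\setminus \Sigma_{m,2y}$ cannot be contained in $\tilde{\mathcal{W}}_{m,2y}$ (a containment would force the reverse dimension inequality), so there is a polynomial $q\in P_{m,2y}\setminus\Sigma_{m,2y}$ with $q\notin \tilde{\mathcal{W}}_{m,2y}$. By Lemma~\ref{lem:Wtilde-property} such a $q$ is not in $\mathcal{W}_{m,2y}$, so $q$ is a non-negative homogeneous polynomial of degree $2y$ in $m$ variables that is not a hyperwron. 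I do not anticipate a serious obstacle here; the work is all in the lemmas already proved, and the main point requiring care is the bookkeeping that the covering set $\Gamma_{\mathcal{W}_{m,2y}}$ genuinely has the form~\eqref{eq:gamma-structure} with finitely many semi-algebraic pieces whose ambient dimension is independent of $n$ — which is exactly what Lemma~\ref{lem:sa-cover-W} secures.
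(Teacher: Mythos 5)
Your proposal is correct and follows essentially the same line of argument as the paper's own proof: cover $\tilde{\mathcal{W}}_{m,2y}$ by the semi-algebraic set $\Gamma_{\mathcal{W}_{m,2y}}$ using Lemma~\ref{lem:sa-cover-W}, bound $\dim\Gamma_{\mathcal{W}_{m,2y}}$ via Proposition~\ref{proposition: dimension inequality non-inclusion}, and compare with $\dim(P_{m,2y}\setminus\Sigma_{m,2y})$ from Lemma~\ref{lemma: dimension equality}. The only cosmetic differences are that the paper phrases the final step as a proof by contradiction (assuming $\mathcal{W}_{m,2y}=P_{m,2y}$), whereas you argue directly, and you briefly write ``$\dim\tilde{\mathcal{W}}_{m,2y}$'' even though $\tilde{\mathcal{W}}_{m,2y}$ is not obviously semi-algebraic — one should instead apply Lemma~\ref{lemma: subset-dimension} to the containment $P_{m,2y}\setminus\Sigma_{m,2y}\subseteq \Gamma_{\mathcal{W}_{m,2y}}$ directly, as the rest of your argument correctly does.
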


\begin{proof}
Let $\Gamma_{\mathcal{W}_{m,2y}}\subseteq F_{m,2y}$ be defined by
\begin{equation*}
    \label{eq:gamW-def}
    \Gamma_{\mathcal{W}_{m,2y}} = \bigcup_{(d,k)\in \tilde{\Omega}_y^W} \Theta_1\left(F_{m,(d-1)k}\times F_{m,(d-1)k}\times F_{m,dk}\times F_{m,(d-2)k}\right),
\end{equation*}
where $\Theta_1$ is defined in Lemma~\ref{lem:sa-cover-W}.
We first claim that $\Gamma_{\mathcal{W}_{m,2y}} \supseteq \tilde{\mathcal{W}}_{m,2y} \supseteq \mathcal{W}_{m,2y}\setminus \Sigma_{m,2y}$. 
This holds because 
\begin{align*} \Gamma_{\mathcal{W}_{m,2y}} & = \bigcup_{(d,k)\in \tilde{\Omega}_y^W} \Theta_1\left(F_{m,(d-1)k}\times F_{m,(d-1)k}\times F_{m,dk}\times F_{m,(d-2)k}\right)\\
& = \bigcup_{(d,k)\in \tilde{\Omega}_y^W}\bigcup_{n\geq 1} \Theta_1\left(F_{m,(d-1)k}\times F_{m,(d-1)k}\times F_{m,dk}\times F_{m,(d-2)k}\right)\\
& \supseteq \bigcup_{(d,k)\in \tilde{\Omega}_y^W}\bigcup_{n\geq 1} \Theta\left(\mathcal{S}_{e,W}^{n,m,d,k}\right)\\
& = \tilde{\mathcal{W}}_{m,2y} \supseteq \mathcal{W}_{m,2y}\setminus \Sigma_{m,2y},
\end{align*}
where the first equality is the definition of $\Gamma_{\mathcal{W}_{m,2y}}$, the second equality holds because $\Theta_1\left(F_{m,(d-1)k}\times F_{m,(d-1)k}\times F_{m,dk}\times F_{m,(d-2)k}\right)$ is independent of $n$, the first containment follows from Lemma~\ref{lem:sa-cover-W}, and the final containment follows from Lemma~\ref{lem:Wtilde-property}.

To complete the proof, we assume that $\mathcal{W}_{m,2y} = P_{m,2y}$ and derive a contradiction.
Lemma~\ref{lem:sa-cover-W} shows that $\Theta_1$ is a semi-algebraic map. The set $\Tilde{\Omega}_y^W$ is finite by construction. Therefore, Proposition~\ref{proposition: dimension inequality non-inclusion} and the definition of $\Gamma_{\mathcal{W}_{m,2y}}$ imply that 
\begin{align} 
\dim(\Gamma_{\mathcal{W}_{m,2y}}) &\leq \max_{(d,k)\in \tilde{\Omega}_{y}^W} \dim(F_{m,(d-1)k}\times F_{m,(d-1)k}\times F_{m,dk}\times F_{m,(d-2)k})\nonumber\\
&= \max_{(d,k)\in \tilde{\Omega}_{y}^W}2\binom{m+(d-1)k-1}{(d-1)k} + \binom{m+dk-1}{dk} + \binom{m+(d-2)k-1}{(d-2)k}\nonumber\\
& < \binom{m+2y-1}{2y}\label{eq:dim-ineq1},
\end{align}
where the last inequality follows from the assumption that the inequality~\eqref{eq: hyperbolic-Wronskian inequality} holds.

Observe that if $\mathcal{W}_{m,2y} = P_{m,2y}$ then $\mathcal{W}_{m,2y}\setminus \Sigma_{m,2y} = P_{m,2y}\setminus \Sigma_{m,2y}$. On the other hand, we have established 
\[\Gamma_{\mathcal{W}_{m,2y}} \supseteq \mathcal{W}_{m,2y}\setminus \Sigma_{m,2y} = P_{m,2y}\setminus \Sigma_{m,2y}.\]
This, together with Lemmas~\ref{lemma: subset-dimension} and~\ref{lemma: dimension equality} (and the assumption that $m>2$, $2y>2$ and $(m,2y)\neq (3,4)$) implies the inequality 
\[
 \dim(\Gamma_{\mathcal{W}_{m,2y}}) \geq \dim(P_{m,2y}\setminus \Sigma_{m,2y}) = \binom{m+2y-1}{2y},\]
contradicting~\eqref{eq:dim-ineq1}. Therefore there must exist a non-negative polynomial that is not a hyperwron.
\end{proof}

\subsection{Degree-restricted-Bezoutian certificates}
\label{sec: dimension analysis --- Degree-restricted Bezoutian certificates}
In this section, we construct a semi-algebraic set $\Gamma_{\mathcal{B}_{m,2y}}$ of the form~\eqref{eq:gamma-structure} that contains all degree-restricted hyperzouts that are not sums of squares. This leads to a sufficient condition for the existence of a non-negative homogeneous polynomial that is not a degree-restricted hyperzout. 
Our arguments in this section follow the same strategy employed in Section~\ref{sec: dimension analysis --- Wronskian certificates} for the case of hyperwrons.

We begin by defining a subset of degree-restricted hyperzouts which contains all degree-restricted hyperzouts that are not sums of squares.
Recall from~\eqref{eq:hyperzout-basic-decomp} that the set of degree-restricted hyperzouts decomposes as
\[ \mathcal{B}_{m,2y}= \bigcup_{(d,k,\mu)\in \Omega_y^B} \bigcup_{n\geq 1}\eta(\mathcal{S}_{e,B}^{n,m,d,k,\mu})\]
where $\Omega_{y}^{B} = \{(d,y,2)\in \mathbb{N}^3\,:\, d\geq 2\} \cup \{(d,k,\mu)\in \mathbb{N}^3\,:\,(\mu-1)k = y,\;\mu\leq d\leq 2\mu-3\}$ and $\eta$ is defined in~\eqref{eq:eta-def}. We refer the reader to Remark~\ref{remark: explanation of degree restriction on degree restricted hyperzouts} for the choice of such constraints on the parameters involved in $\Omega_y^B$.
Let
\begin{equation}
    \label{eq: Tilde OmegayB}
    \tilde{\Omega}_{y}^B = \{(d,k,\mu)\in {\Omega}_y^B\;:\; \mu\geq 3\}
\end{equation}
and define
\begin{equation}
\tilde{\mathcal{B}}_{m,2y}=\bigcup_{(d,k,\mu)\in \tilde{\Omega}_y^B}\bigcup_{n\geq 1}  \eta \left(\mathcal{S}_{e,B}^{n,m,d,k,\mu} \right).
\label{eq: Tilde Bezoutian collection definition}    
\end{equation}
The set $\tilde{\mathcal{B}}_{m,2y}$ contains all degree-restricted hyperzouts that are not sums of squares. The argument is very similar to the proof of Lemma~\ref{lem:Wtilde-property}, in the Wronskian setting.

\begin{lemma}
\label{lem:Btilde-property}
    Let $\tilde{\mathcal{B}}_{m,2y}$ be defined in~\eqref{eq: Tilde Bezoutian collection definition}. Then, $\mathcal{B}_{m,2y}\setminus \Sigma_{m,2y}\subseteq \tilde{\mathcal{B}}_{m,2y}$.
\end{lemma}
\begin{proof}
    Let $q\in \mathcal{B}_{m,2y}\setminus \Sigma_{m,2y}$. Then 
    \[q\in \mathcal{B}_{m,2y} = \tilde{\mathcal{B}}_{m,2y} \cup \left( \bigcup_{\substack{n\geq 1\\d\geq 2}} \eta\left(\mathcal{S}_{e,B}^{n,m,d,y,2}\right)\right)\subseteq \tilde{\mathcal{B}}_{m,2y} \cup \Sigma_{m,2y},\]
    where the inclusion follows from Lemma~\ref{lem:sos-hyperwon-hyperzout}. Since $q\notin \Sigma_{m,2y}$ by assumption, it follows that $q\in \tilde{\mathcal{B}}_{m,2y}$.
\end{proof}
Our aim, now, is to construct a set $\Gamma_{\mathcal{B}_{m,2y}}$ that is a finite union of images of semi-algebraic maps, (i.e., of the form~\eqref{eq:gamma-structure}) that contains $\tilde{\mathcal{B}}_{m,2y}$. To do this we use the observation that the map $\eta$, defined in~\eqref{eq:eta-def} factors through a (low-dimensional) space, the dimension of which is independent of $n$. 

In the argument that follows, if $a(x)\in T_{d+1,k}^{m,d+1}$ and $b(x)\in T_{d,k}^{m,d}$ then we can think of the entries $a_i(x)\in F_{m,(d-i)k}$ (for $i=0,1,\ldots,d$) as coefficients of a univariate polynomial
\[ p_a(t) = a_0(x) + a_1(x)t + \cdots + a_{d-1}(x)t^{d-1} + a_d(x)t^d\]
and the entries $b_j(x)\in F_{(m,d-1-j)k}$ (for $j=0,1,\ldots,d-1$) as the coefficients of a univariate polynomial
\[ p_b(t) = b_0(x) + b_1(x)t + \cdots + b_{d-2}(x)t^{d-2} +b_{d-1}(x)t^{d-1}.\]
Recall that we use the notation $B_d(a,b)$ to denote the Bezoutian of these two univariate polynomials.

\begin{lemma}
    \label{lem:sa-cover-B}
    Let $m$ and $y$ be positive integers and let $(d,k,\mu)\in \Omega_{y}^B$. Then 
    \[ \eta\left(\mathcal{S}_{e,B}^{n,m,d,k,\mu}\right) \subseteq \eta_1\left(T_{d+1,k}^{m,d+1}\times T_{d,k}^{m,d}\times T_{\mu,k}^{m,d}\right)\]
    where $\eta_1:T_{d+1,k}^{m,d+1}\times T_{d,k}^{m,d}\times T_{\mu,k}^{m,d}\rightarrow F_{m,2k(\mu-1)}$, defined by 
    $\eta_1(a,b,\xi) = \xi(x)^\intercal B_d(a(x),b(x)) \xi(x)$, is a semi-algebraic map.
\end{lemma}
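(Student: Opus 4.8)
The plan is to mimic the structure of the proof of Lemma~\ref{lem:sa-cover-W} in the Wronskian setting, factoring the map $\eta$ through a space whose dimension does not depend on $n$. First I would observe that $\eta = \eta_1 \circ \eta_2$, where $\eta_2:\mathcal{S}_{e,B}^{n,m,d,k,\mu}\rightarrow T_{d+1,k}^{m,d+1}\times T_{d,k}^{m,d}\times T_{\mu,k}^{m,d}$ records exactly the data needed to reconstruct the output polynomial. Concretely, given $(p,u,v,\phi,\xi)$, the composed polynomials $p_{x,u}(t) = p(\phi(x)+tu)$ and $D_v p_{x,u}(t) = D_v p(\phi(x)+tu)$ are univariate polynomials in $t$ of degree at most $d$ whose coefficients, as functions of $x$, are homogeneous of the degrees recorded by the spaces $T_{d+1,k}^{m,d+1}$ and $T_{d,k}^{m,d}$ respectively (the $i$-th coefficient of $p_{x,u}$ has degree $(d-i)k$ in $x$, matching the definition of $T_{d+1,k}^{m,d+1}$, and similarly for $D_v p_{x,u}$). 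So I would set $\eta_2(p,u,v,\phi,\xi) = (a,b,\xi)$ where $a(x)$ is the coefficient vector of $p_{x,u}(t)$ and $b(x)$ is the coefficient vector of $D_v p_{x,u}(t)$. Then $\eta_1(\eta_2(p,u,v,\phi,\xi))(x) = \xi(x)^\intercal B_d(a(x),b(x))\xi(x)$, and since the Bezoutian only depends on the univariate polynomials $p_{x,u}$ and $D_v p_{x,u}$, this equals $\xi(x)^\intercal B_{p,u,v}(\phi(x))\xi(x) = \eta(p,u,v,\phi,\xi)(x)$, using~\eqref{eq:pBdef}. This gives the containment $\eta\left(\mathcal{S}_{e,B}^{n,m,d,k,\mu}\right)\subseteq \eta_1\left(T_{d+1,k}^{m,d+1}\times T_{d,k}^{m,d}\times T_{\mu,k}^{m,d}\right)$ directly, since $\eta_2$ takes values in that product space.

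Next I would verify that $\eta_1$ is semi-algebraic by examining its graph, exactly as in Lemma~\ref{lem:sa-cover-W}. The graph is the set of tuples $(a,b,\xi,q)$ in $T_{d+1,k}^{m,d+1}\times T_{d,k}^{m,d}\times T_{\mu,k}^{m,d}\times F_{m,2k(\mu-1)}$ such that $q(x) = \xi(x)^\intercal B_d(a(x),b(x))\xi(x)$ holds identically in $x$. Each entry of $B_d(a(x),b(x))$ is a bilinear (hence polynomial) function of the coefficients of $a$ and $b$, so the quadratic form $\xi^\intercal B_d(a,b)\xi$ is a polynomial function of all the coefficient data; equating it coefficient-by-coefficient with $q$ yields finitely many polynomial equations in the ambient coordinates. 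Hence the graph is semi-algebraic (indeed algebraic), so by Definition~\ref{definition: semi-algebraic function} the map $\eta_1$ is semi-algebraic.

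The one point requiring a little care is the bookkeeping of degrees: I need to confirm that the coefficient vectors $a(x)$ and $b(x)$ genuinely land in $T_{d+1,k}^{m,d+1}$ and $T_{d,k}^{m,d}$ as defined in~\eqref{...} (the spaces with entries $F_{m,0}\times F_{m,k}\times\cdots$ preceded by the appropriate number of zero slots). Since $p\in F_{n,d}$ and $\phi\in F_{m,k}^n$, the substitution $p(\phi(x)+tu)$ expands as $\sum_{i=0}^d t^i c_i(x)$ where $c_i$ is homogeneous of degree $(d-i)k$ in $x$ — this is precisely the slot structure of $T_{d+1,k}^{m,d+1}$ with $\mu = d+1$ (no leading zero slots). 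Applying $D_v$ drops the top coefficient and leaves a degree-$(d-1)$ polynomial in $t$ whose $j$-th coefficient has degree $(d-1-j)k$ in $x$, matching $T_{d,k}^{m,d}$ with $\mu = d$. I expect this degree check to be the main (though still routine) obstacle, since it requires being precise about the indexing conventions in the definition of $T_{\mu,k}^{m,d}$ and in the Bezoutian $B_d$; everything else follows the template of Lemma~\ref{lem:sa-cover-W} essentially verbatim.
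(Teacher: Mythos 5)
Your proposal is correct and matches the paper's proof essentially verbatim: you factor $\eta = \eta_1 \circ \eta_2$ where $\eta_2$ records the coefficient vectors of $p(\phi(x)+tu)$ and $D_vp(\phi(x)+tu)$ together with $\xi$, deduce the containment because $\eta_2$ lands in $T_{d+1,k}^{m,d+1}\times T_{d,k}^{m,d}\times T_{\mu,k}^{m,d}$, and establish that $\eta_1$ is semi-algebraic by observing its graph is cut out by finitely many polynomial equations. The only cosmetic slip is writing $p_{x,u}(t)=p(\phi(x)+tu)$ rather than $p_{\phi(x),u}(t)$; the degree bookkeeping you flagged as the main point requiring care is precisely the observation the paper makes in the paragraph preceding the lemma.
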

\begin{proof}
    The map $\eta$ factors as $\eta = \eta_1\circ \eta_2$ where
    $\eta_2:F_{n,d}\times \R^n\times \R^n\times F_{m,k}^n \times T_{\mu,k}^{m,d}\rightarrow T_{d+1,k}^{m,d+1}\times T_{d,k}^{m,d}\times T_{\mu,k}^{m,d}$ is defined by
    \[ \eta_2(p,u,v,\phi,\xi) = (p_{\phi(x),u},D_vp_{\phi(x),u},\xi),\]
    where $p_{\phi(x),u}(t) = p(\phi(x)+tu)$ and $D_vp_{\phi(x),u}(t) = D_vp(\phi(x)+tu)$. Note that the coefficients of powers of $t$ in these polynomials
    can be thought of as elements of $T_{d+1,k}^{m,d+1}$ and $T_{d,k}^{m,d}$, respectively.
    
    Since $\eta_2\left(\mathcal{S}_{e,B}^{n,m,d,k,\mu}\right)\subseteq T_{d+1,k}^{m,d+1}\times T_{d,k}^{m,d}\times T_{\mu,k}^{m,d}$, it follows directly that 
\[ \eta\left(\mathcal{S}_{e,B}^{n,m,d,k,\mu}\right) \subseteq \eta_1\left(T_{d+1,k}^{m,d+1}\times T_{d,k}^{m,d}\times T_{\mu,k}^{m,d}\right).\]
    To see that $\eta_1$ is a semi-algebraic map, we note that its graph is 
    \[ \{(a,b,\xi,q)\in T_{d+1,k}^{m,d+1}\times T_{d,k}^{m,d} \times T_{\mu,k}^{m,d}(d-1)k \times  F_{m,2y}\;:\; q(x) = \xi(x)^\intercal B_d(a(x),b(x))\xi(x)\}.\]
    This is a semi-algebraic set since it is defined by the common solution of a finite collection of quartic equations, obtained by equating the coefficients on the left and right hand sides of the polynomial identity $q(x) = \xi(x)B_d(a(x),b(x))\xi(x)$. 
\end{proof}

We are now in a position to give a sufficient condition that implies the existence of a non-negative polynomial that is not a degree-restricted hyperzout.
\begin{theorem}
\label{theorem: Dimension bound of Bezoutian from hyperbolic polynomial}
Suppose $m>2$, $2y>2$ and $(m,2y)\neq (3,4)$. There exists a non-negative homogeneous polynomial of degree $2y$ in $m$ variables that is not a degree-restricted hyperzout whenever
\begin{align}
        &\dim{P_{m,2y}}=\binom{2y+m-1}{m-1} \notag\\ 
        &>\max_{(d,k,\mu)\in \Tilde{\Omega}_{y}^B} \sum_{i=0}^{\mu-1} \binom{m+ik-1}{m-1}+\sum_{i=0}^{d} \binom{m+ik-1}{m-1}+\sum_{i=0}^{d-1} \binom{m+ik-1}{m-1}.
        \label{eq: Bezoutian inequality}
\end{align}
where  
$\Tilde{\Omega}_{y}^B$ is defined in~\eqref{eq: Tilde OmegayB}. 
\end{theorem}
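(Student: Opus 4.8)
The plan is to run the argument of Theorem~\ref{theorem: Dimension bound of Wronskian from hyperbolic polynomial} verbatim, with the map $\Theta_1$ replaced by the map $\eta_1$ of Lemma~\ref{lem:sa-cover-B}. Concretely, I would set
\[
\Gamma_{\mathcal{B}_{m,2y}} \;=\; \bigcup_{(d,k,\mu)\in \tilde{\Omega}_y^B} \eta_1\!\left(T_{d+1,k}^{m,d+1}\times T_{d,k}^{m,d}\times T_{\mu,k}^{m,d}\right),
\]
where $\tilde{\Omega}_y^B$ is the finite set from~\eqref{eq: Tilde OmegayB}: the constraints $k(\mu-1)=y$ and $\mu\le d\le 2\mu-3$ (which already force $\mu\ge 3$) leave only finitely many triples. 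After identifying each product of $T$-spaces with a Euclidean space of the same dimension, $\Gamma_{\mathcal{B}_{m,2y}}$ has the form~\eqref{eq:gamma-structure}, since $\eta_1$ is semi-algebraic by Lemma~\ref{lem:sa-cover-B}.

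Next I would establish the containment chain $\Gamma_{\mathcal{B}_{m,2y}}\supseteq \tilde{\mathcal{B}}_{m,2y}\supseteq \mathcal{B}_{m,2y}\setminus\Sigma_{m,2y}$. The first inclusion follows from Lemma~\ref{lem:sa-cover-B} together with the fact that $\eta_1(T_{d+1,k}^{m,d+1}\times T_{d,k}^{m,d}\times T_{\mu,k}^{m,d})$ is independent of $n$, so the union over $n\ge 1$ in~\eqref{eq: Tilde Bezoutian collection definition} collapses onto a single image; the second inclusion is Lemma~\ref{lem:Btilde-property}. Then I argue by contradiction: assume $\mathcal{B}_{m,2y}=P_{m,2y}$, so that $\mathcal{B}_{m,2y}\setminus\Sigma_{m,2y}=P_{m,2y}\setminus\Sigma_{m,2y}$, which under the hypotheses $m>2$, $2y>2$, $(m,2y)\ne(3,4)$ has dimension $\binom{m+2y-1}{2y}=\binom{2y+m-1}{m-1}$ by Lemma~\ref{lemma: dimension equality}. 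By Lemma~\ref{lemma: subset-dimension} this gives $\dim(\Gamma_{\mathcal{B}_{m,2y}})\ge\binom{2y+m-1}{m-1}$. On the other hand, Proposition~\ref{proposition: dimension inequality non-inclusion} yields
\[
\dim(\Gamma_{\mathcal{B}_{m,2y}}) \;\le\; \max_{(d,k,\mu)\in\tilde{\Omega}_y^B}\Bigl(\dim T_{d+1,k}^{m,d+1}+\dim T_{d,k}^{m,d}+\dim T_{\mu,k}^{m,d}\Bigr),
\]
and since $\dim T_{\nu,k}^{m,D}=\sum_{i=0}^{\nu-1}\dim F_{m,ik}=\sum_{i=0}^{\nu-1}\binom{m+ik-1}{m-1}$ (the $\{0\}$-factors contributing nothing), this upper bound is exactly the right-hand side of~\eqref{eq: Bezoutian inequality}. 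Hypothesis~\eqref{eq: Bezoutian inequality} then forces $\dim(\Gamma_{\mathcal{B}_{m,2y}})<\binom{2y+m-1}{m-1}$, a contradiction; hence a non-negative polynomial that is not a degree-restricted hyperzout must exist.

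The step requiring the most care is the dimension bookkeeping: one must identify precisely which $T$-spaces arise from the factorisation $\eta=\eta_1\circ\eta_2$, namely that the coefficient vector of $t\mapsto p(\phi(x)+tu)$ lives in $T_{d+1,k}^{m,d+1}$ (coefficients of degrees $dk,(d-1)k,\dots,0$ in $x$), that of $t\mapsto D_vp(\phi(x)+tu)$ in $T_{d,k}^{m,d}$, and $\xi$ in $T_{\mu,k}^{m,d}$, and then check that $\sum_{i=0}^{\mu-1}+\sum_{i=0}^{d}+\sum_{i=0}^{d-1}$ of the binomials reproduces~\eqref{eq: Bezoutian inequality} exactly. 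No genuinely new obstacle appears beyond the Wronskian case of Theorem~\ref{theorem: Dimension bound of Wronskian from hyperbolic polynomial}: Lemma~\ref{lem:sa-cover-B} has already absorbed the only subtle point, which is that $\eta$ factors through a space whose dimension does not depend on the number of variables $n$ of the underlying hyperbolic polynomial.
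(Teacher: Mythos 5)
Your proposal is correct and follows essentially the same approach as the paper's proof: the same set $\Gamma_{\mathcal{B}_{m,2y}}$, the same containment chain $\Gamma_{\mathcal{B}_{m,2y}}\supseteq\tilde{\mathcal{B}}_{m,2y}\supseteq\mathcal{B}_{m,2y}\setminus\Sigma_{m,2y}$, and the same contradiction via Proposition~\ref{proposition: dimension inequality non-inclusion} and Lemma~\ref{lemma: dimension equality}. In fact your dimension bookkeeping is slightly more carefully spelled out than the paper's (which contains a typo $\binom{m+ik-1}{m}$ where $\binom{m+ik-1}{m-1}$ is meant, and writes $\Theta_1$ where $\eta_1$ is meant); your observation that the constraints in $\Omega_y^B$ already force $\mu\ge 3$ in the second component is correct, and your identifications of the three $T$-spaces with the three sums in~\eqref{eq: Bezoutian inequality} are exactly right.
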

\begin{proof}
    Let $\Gamma_{\mathcal{B}_{m,2y}}\subseteq F_{m,2y}$ be defined by
\begin{equation*}
    \label{eq:gamB-def}
    \Gamma_{\mathcal{B}_{m,2y}} = \bigcup_{(d,k)\in \tilde{\Omega}_y^B} \eta_1\left(T_{d+1,k}^{m,d+1}\times T_{d,k}^{m,d}\times T_{\mu,k}^{m,d}\right).
\end{equation*}
We first claim that $\Gamma_{\mathcal{B}_{m,2y}} \supseteq \tilde{\mathcal{B}}_{m,2y} \supseteq \mathcal{B}_{m,2y}\setminus \Sigma_{m,2y}$. 
This holds because 
\begin{align*} \Gamma_{\mathcal{B}_{m,2y}} & = \bigcup_{(d,k)\in \tilde{\Omega}_y^B} \eta_1\left(T_{d+1,k}^{m,d+1}\times T_{d,k}^{m,d}\times T_{\mu,k}^{m,d}\right)\\
& = \bigcup_{(d,k)\in \tilde{\Omega}_y^B}\bigcup_{n\geq 1} \eta_1\left(T_{d+1,k}^{m,d+1}\times T_{d,k}^{m,d}\times T_{\mu,k}^{m,d}\right)\\
& \supseteq \bigcup_{(d,k)\in \tilde{\Omega}_y^B}\bigcup_{n\geq 1} \eta\left(\mathcal{S}_{e,B}^{n,m,d,k}\right)\\
& = \tilde{\mathcal{B}}_{m,2y} \supseteq \mathcal{B}_{m,2y}\setminus \Sigma_{m,2y}
\end{align*}
where the first equality is the definition of $\Gamma_{\mathcal{B}_{m,2y}}$, the second equality holds because $\eta_1\left(T_{d+1,k}^{m,d+1}\times T_{d,k}^{m,d}\times T_{\mu,k}^{m,d}\right)$ is independent of $n$, the first containment follows from Lemma~\ref{lem:sa-cover-B}, and the final containment follows from Lemma~\ref{lem:Btilde-property}.

To complete the proof, we assume that  $\mathcal{B}_{m,2y}= P_{m,2y}$ and derive a contradiction. Lemma~\ref{lem:sa-cover-B} shows $\eta_1$ is a semi-algebraic map, $\Tilde{\Omega}_y^B$ is a finite set construction. By the definition of $\Gamma_{\mathcal{B}_{m,2y}}$, we have $ \Gamma_{\mathcal{B}_{m,2y}} \subseteq \bigcup_{(d,k)\in \tilde{\Omega}_y^B} \eta_1\left(T_{d+1,k}^{m,d+1}\times T_{d,k}^{m,d}\times T_{\mu,k}^{m,d}\right)$.
Since $\dim(T_{\mu,k}^{m,d}) = \sum_{i=0}^{\mu-1}\binom{m+ik-1}{m-1}$, Proposition~\ref{proposition: dimension inequality non-inclusion} shows that 
\begin{align} 
\dim(\Gamma_{\mathcal{B}_{m,2y}}) &\leq \max_{(d,k,\mu)\in \tilde{\Omega}_{y}^B} \dim(T_{d+1,k}^{m,d+1}\times T_{d,k}^{m,d}\times T_{\mu,k}^{m,d})\nonumber\\
&= \max_{(d,k,\mu)\in \tilde{\Omega}_{y}^B}\sum_{i=0}^{\mu-1}\binom{m+ik-1}{m} + \sum_{i=0}^{d}\binom{m+ik-1}{m-1} + \sum_{i=0}^{d-1}\binom{m+ik-1}{m-1}\nonumber\\
& < \binom{m+2y-1}{m-1}\label{eq:dim-ineq2},
\end{align}
where the last inequality assumes ~\eqref{eq: Bezoutian inequality} holds.

Observe that if $\mathcal{B}_{m,2y} = P_{m,2y}$ then $\mathcal{B}_{m,2y}\setminus \Sigma_{m,2y} = P_{m,2y}\setminus \Sigma_{m,2y}$. On the other hand, we have established
\[\Gamma_{\mathcal{B}_{m,2y}} \supseteq \mathcal{B}_{m,2y}\setminus \Sigma_{m,2y} = P_{m,2y}\setminus \Sigma_{m,2y}.\]
This, together with Lemmas~\ref{lemma: subset-dimension} and~\ref{lemma: dimension equality} (and the assumption that $m>2$, $2y>2$ and $(m,2y)\neq (3,4)$) implies the inequality 
\[
 \dim(\Gamma_{\mathcal{B}_{m,2y}}) \geq \dim(P_{m,2y}\setminus \Sigma_{m,2y}) = \binom{m+2y-1}{m-1},\]
contradicting~\eqref{eq:dim-ineq2}. Therefore there must exist a non-negative polynomial that is not a degree-restricted hyperzout.
\end{proof}
 We conclude this subsection by discussing the degree restriction that we impose in the definition of degree-restricted hyperzout.
\begin{remark}
   The restriction of $d\leq 2\mu -3$ in Definition~\ref{definition: Bezoutian certificates} comes from the fact that~\eqref{eq: Bezoutian inequality} cannot be satisfied unless $dk < 2y$ for all $(d,k,\mu)\in \tilde{\Omega}_{y}^B$. Since $\tilde{\Omega}_y^B$ consists of tuples $(d,k,\mu)$ such that $(\mu-1)k = y$, it follows that $2y-dk = k(2\mu-2-d)>0$ for any such tuple. Recognizing that $d,k,\mu$ are all positive integers, we obtain $d\leq 2\mu-3$.
   This restriction essentially says that we do not allow the use of high degree hyperbolic polynomials to generate relatively low-degree non-negative hyperzouts. 
    \label{remark: explanation of degree restriction on degree restricted hyperzouts}
\end{remark}

\subsection{Non-negative polynomials that are not hyperwrons}

\label{sec:main-results}
In this section, we prove Theorem~\ref{theorem: m_y no Wronskian}, which gives conditions on the degree $2y$ and number of variables $m$ that ensure there exists a non-negative homogeneous polynomial that is not a hyperwron.

This result is obtained by using the sufficient condition given in Theorem~\ref{theorem: Dimension bound of Wronskian from hyperbolic polynomial}.

The main effort in the proof is then to find ranges of integer parameters where certain expressions involving binomial coefficients are non-negative. 

Before proving Theorem~\ref{theorem: m_y no Wronskian}, we establish a useful lemma about binomial coefficients.
\begin{lemma}
\label{lem:binom-ineq}
Let $F:\N^2\rightarrow \N$ be defined by $F(\ell,\alpha) = \binom{\ell+\alpha}{\ell}$. 
If $1\leq \ell' < \ell$ and $0\leq \alpha <  \beta$, then
\begin{enumerate}[(i)]
    \item $\frac{F(\ell',\alpha)}{F(\ell',\beta)} < 1$ and \label{lem:binom-ineq - 1}
    \item  $\frac{F(\ell',\alpha)}{F(\ell,\alpha)} > \frac{F(\ell',\beta)}{F(\ell,\beta)}$. \label{lem:binom-ineq - 2}
\end{enumerate}
\end{lemma}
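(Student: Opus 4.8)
The plan is to express both ratios in terms of the elementary telescoping identity
\[
\frac{F(\ell,\beta)}{F(\ell,\alpha)} \;=\; \frac{\binom{\ell+\beta}{\ell}}{\binom{\ell+\alpha}{\ell}} \;=\; \prod_{j=\alpha+1}^{\beta}\frac{\ell+j}{j},
\]
which holds for any $\ell\ge 0$ whenever $\alpha<\beta$ (the index set $\{\alpha+1,\dots,\beta\}$ is nonempty precisely because $\alpha<\beta$, and each successive factor comes from $\binom{\ell+j}{\ell}/\binom{\ell+j-1}{\ell} = (\ell+j)/j$). Once everything is rewritten as a finite product, each part of the lemma reduces to checking that every factor of a nonempty product exceeds $1$.

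For part~(i), applying the identity with $\ell$ replaced by $\ell'$ gives $F(\ell',\beta)/F(\ell',\alpha) = \prod_{j=\alpha+1}^{\beta}(\ell'+j)/j$; since $\ell'\ge 1$, each factor satisfies $(\ell'+j)/j > 1$, so the product is strictly greater than $1$, which is exactly $F(\ell',\alpha)/F(\ell',\beta) < 1$. For part~(ii), I would first rewrite the claimed inequality — all four quantities being positive — in the cross-multiplied form $F(\ell,\beta)/F(\ell,\alpha) > F(\ell',\beta)/F(\ell',\alpha)$, apply the identity to each side, and cancel the common denominator $\prod_{j=\alpha+1}^{\beta} j$; this reduces the statement to $\prod_{j=\alpha+1}^{\beta}\frac{\ell+j}{\ell'+j} > 1$, where each factor exceeds $1$ because $\ell > \ell'$. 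Equivalently, one can observe that $\alpha\mapsto F(\ell',\alpha)/F(\ell,\alpha)$ is strictly decreasing, since the ratio of its value at $\alpha+1$ to its value at $\alpha$ equals $(\ell'+\alpha+1)/(\ell+\alpha+1) < 1$.

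Since the argument is purely elementary, there is no genuine obstacle; the only point requiring care is the bookkeeping of strict inequalities, and this is exactly where the hypotheses are used: $\ell'\ge 1$ forces $(\ell'+j)/j$ to be strictly larger than $1$ in part~(i); $\ell'<\ell$ forces $(\ell+j)/(\ell'+j)>1$ in part~(ii); and $\alpha<\beta$ guarantees the telescoping products are nonempty (rather than empty products equal to $1$), so strictness is not lost. These monotonicity facts are precisely what will later drive the parameter-range analysis needed to apply Theorem~\ref{theorem: Dimension bound of Wronskian from hyperbolic polynomial}.
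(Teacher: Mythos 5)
Your proof is correct, and it takes a genuinely different (dual) route from the paper's. The paper decomposes the ratio by telescoping along the lower index $\ell$: it writes $F(\ell',\alpha)/F(\ell',\beta) = \prod_{j=1}^{\ell'}\frac{j+\alpha}{j+\beta}$, with $\ell'$ factors each strictly less than $1$ (using $\alpha<\beta$), giving (i); then (ii) follows because passing from $\ell'$ to $\ell>\ell'$ adjoins $\ell-\ell'$ additional factors strictly less than $1$, so the longer product is strictly smaller. You instead telescope along the upper-index offset, writing $F(\ell,\beta)/F(\ell,\alpha)=\prod_{j=\alpha+1}^{\beta}\frac{\ell+j}{j}$, a product of $\beta-\alpha$ factors each exceeding $1$ (using $\ell\ge 1$), which gives (i); then (ii) follows by a termwise comparison of two products with the \emph{same} number of factors, $\prod_{j=\alpha+1}^{\beta}\frac{\ell+j}{\ell'+j}>1$ (using $\ell>\ell'$). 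The two arguments are mirror images: the paper leverages a changing number of factors while you leverage a factor-by-factor dominance, and the hypotheses enter in complementary places ($\alpha<\beta$ drives strictness in the paper's (i), $\ell'\ge 1$ drives strictness in yours). Your closing remark that $\alpha\mapsto F(\ell',\alpha)/F(\ell,\alpha)$ is strictly decreasing, with one-step ratio $\frac{\ell'+\alpha+1}{\ell+\alpha+1}<1$, is a clean restatement of (ii) and arguably the most economical way to package the argument.
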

\begin{proof}
For the first inequality we have that 
\[ \frac{F(\ell',\alpha)}{F(\ell',\beta)} =\frac{(\ell'+\alpha)(\ell'-1 + \alpha) \cdots (1+\alpha)}{(\ell'+\beta)(\ell'-1+\beta) \cdots (1+\beta)} = \prod_{j=1}^{\ell'}\left(\frac{j+\alpha}{j+\beta}\right) < 1\]
since $\alpha<\beta$. Similarly, 
    \begin{equation*}
\label{eq:binom-ineq}
\frac{F(\ell',\alpha)}{F(\ell',\beta)} = 
 \prod_{j=1}^{\ell'}\left(\frac{j+\alpha}{j+\beta}\right)> \prod_{j=1}^{\ell}\left(\frac{j+\alpha}{j+\beta}\right) = \frac{F(\ell,\alpha)}{F(\ell,\beta)}.\end{equation*}
since each term in the product is strictly less than one, and $\ell > \ell'$.
\end{proof}

We now proceed with the proof of Theorem~\ref{theorem: m_y no Wronskian}.

\begin{proof}[{Proof of Theorem~\ref{theorem: m_y no Wronskian}}]
Our aim is to find positive integer values of $m$ and $y$ such that~\eqref{eq: hyperbolic-Wronskian inequality} holds, i.e.,
\[ \binom{2y+m-1}{m-1} > \max_{(d,k)\in \tilde{\Omega}_y^W} 2\binom{(d-1)k+m-1}{m-1} + \binom{dk+m-1}{m-1} + \binom{(d-2)k+m-1}{m-1}\]
where $\tilde{\Omega}_y^W = \{(d,k)\in \mathbb{N}^2\;:\;d \geq 3, (d-1)k=y\}$. This is implied by  
\[ \binom{2y+m-1}{m-1} > \max_{2\leq 2k\leq y} 2\binom{y+m-1}{m-1} + \binom{y+k+m-1}{m-1} + \binom{y-k+m-1}{m-1},\]
which is obtained by eliminating $d$ and noting that any $k$ such that $(d,k)\in \tilde{\Omega}_y^W$ satisfies $y = (d-1)k \geq 2k$. 

Let 
\[ g(m,k,y) = \binom{2y+m-1}{m-1} -2\binom{y+m-1}{m-1} - \binom{y+k+m-1}{m-1} - \binom{y-k+m-1}{m-1}.\]
We first consider the case $m=4$. An explicit computation gives 
\[ g(4,k,y) = 2y^3/3 - k^2y-11 y/3 - 2k^2 - 3\]
which is decreasing for increasing $|k|$. Therefore if $2\leq 2k\leq y$ we have that 
\[ g(4,k,y) \geq g(4,y/2,y) = 5y^3/12 - y^2/2 - 11y/3 - 3.\]
It is straightforward to check that $g(4,y/2,y)>0$ whenever $y\geq 4$. This shows that there is a non-negative polynomial of degree $2y\geq 8$ in $m=4$ variables that is not a hyperwron.

We deal with the case $m=5$ via a similar approach. An explicit computation gives
\[ g(5,k,y) = y^4/2 + 5 y^3/3 -k^2 y^2/2 -5 k^2 y/2 -25 y/6 -k^4/12 -35 k^2/12 -3\]
which is decreasing for increasing $|k|$.
Therefore if $2\leq 2k\leq y$ we have that 
\[ g(5,k,y) \geq g(5,y/2,y) = 71 y^4/192+25 y^3/24-35 y^2/48-25 y/6-3. \]
It is straightforward to check that $g(5,y/2,y)= 0$ when $y= 2$ and that $g(5,y/2,y) > 0$ whenever $y \geq 3$. This shows that there is a non-negative polynomial of degree $2y\geq 6$ in $m=5$ variables that is not a hyperwron.
 
Next, we consider the case of general $m$. We will show that if $0<k<y$, $2\leq m'<m$ and $g(m',k,y)\geq 0$, then $g(m,k,y)>g(m',k,y)$. Before establishing this, we see how it completes the proof for $m\geq 6$. Indeed, it then follows that 
when $1\leq k\leq y/2$ and $m\geq 6$ we have 
\[ g(m,k,y) > g(5,k,y).\]
Since $g(5,k,y)\geq 0$ whenever $1\leq k\leq y/2$ and $y\geq 2$, it follows that $g(m,k,y) >0$ whenever $m\geq 6$ and $1\leq k \leq y/2$ and $y\geq 2$. This implies that whenever $m\geq 6$ and $y\geq 2$, there is a non-negative polynomial that is not a hyperwron.

It remains to establish that $g(m,k,y)>g(m',k,y)$ when $0<k<y$ and $2\leq m'<m$. 

If $0< k <y$, then $0<y<2y$ and $0<y\pm k < 2y$. Therefore, given $0< k < y$ and adopting the definition of $F(\cdot,\cdot)$ as in Lemma~\ref{lem:binom-ineq}, we have
\begin{align*}
      g(m,k,y) &=F(2y,m-1) - F(y+k,m-1) - 2F(y,m-1) -F(y-k,m-1)\\
      &>F(2y,m-1)\left(1-\frac{F(y+k,m'-1)}{F(2y,m'-1)}-2\frac{F(y,m'-1)}{F(2y,m'-1)} -\frac{F(y-k,m'-1)}{F(2y,m'-1)} \right)\\
      &=\frac{F(2y,m-1)}{F(2y,m'-1)}g(m',k,y)\\
      &>g(m',k,y),
\end{align*}
where the first inequality follows from Lemma~\ref{lem:binom-ineq} (ii) and the second inequality follows from Lemma~\ref{lem:binom-ineq} (i) and $g(m',k,y)\geq 0$. This completes the proof.

\end{proof}

\subsection{Non-negative polynomials that are not degree-restricted hyperzouts}
We now turn our attention to showing the existence of non-negative polynomials that are not degree-restricted hyperzouts. Our sufficient condition (Theorem~\ref{theorem: Dimension bound of Bezoutian from hyperbolic polynomial}) for this is less refined than its counterpart for hyperwrons (Theorem~\ref{theorem: Dimension bound of Wronskian from hyperbolic polynomial}). As such, we only aim to show that given any integer $y>1$, for a sufficiently large number of variables $m$ there is an element of $P_{m,2y}$ that is not a degree-restricted hyperzout.
\begin{theorem}
\label{theorem: m_y no Bezoutian}
If $m,y$ are positive integers such that $y>1$ and $m>10y^2-2y+1$, then there exists a non-negative homogeneous polynomial in $m$ variables of degree $2y$ that is not a degree-restricted hyperzout.

\end{theorem}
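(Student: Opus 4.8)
The plan is to derive the theorem from the sufficient condition established in Theorem~\ref{theorem: Dimension bound of Bezoutian from hyperbolic polynomial}: it suffices to show that the inequality~\eqref{eq: Bezoutian inequality} holds whenever $y>1$ and $m>10y^2-2y+1$. First I would check that these hypotheses put us in the regime where that theorem applies, namely $m>2$, $2y>2$ and $(m,2y)\neq(3,4)$: since $y\geq 2$ we have $2y\geq 4>2$, and $m>10y^2-2y+1\geq 37$, so in particular $m\geq 38$. I would also note that $\tilde{\Omega}_y^B$ (see~\eqref{eq: Tilde OmegayB}) is non-empty, since $(d,1,y+1)\in\tilde{\Omega}_y^B$ for every $d$ with $y+1\leq d\leq 2y-1$ (this interval is non-empty because $y\geq 2$). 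So the maximum on the right-hand side of~\eqref{eq: Bezoutian inequality} is a maximum over a non-empty finite set.

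The heart of the argument is a crude uniform bound on that right-hand side over all $(d,k,\mu)\in\tilde{\Omega}_y^B$. Three elementary observations drive it. First, every exponent $ik$ occurring in the three sums satisfies $ik\leq dk\leq(2\mu-3)k = 2(\mu-1)k - k = 2y-k \leq 2y-1$, using $d\leq 2\mu-3$, $(\mu-1)k=y$ and $k\geq 1$. Second, $a\mapsto\binom{m+a-1}{m-1}$ is increasing in $a$, so every summand is at most $\binom{m+2y-2}{m-1}$. Third, the total number of summands is $\mu + (d+1) + d = \mu + 2d + 1 \leq \mu + 2(2\mu-3) + 1 = 5\mu-5 \leq 5y$, where the last step uses $\mu-1 = y/k \leq y$. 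Combining these,
\begin{align*}
&\max_{(d,k,\mu)\in\tilde{\Omega}_y^B}\left(\sum_{i=0}^{\mu-1}\binom{m+ik-1}{m-1}+\sum_{i=0}^{d}\binom{m+ik-1}{m-1}+\sum_{i=0}^{d-1}\binom{m+ik-1}{m-1}\right)\\
&\qquad\leq 5y\binom{m+2y-2}{m-1}.
\end{align*}

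Finally I would compare this with $\dim P_{m,2y}=\binom{m+2y-1}{m-1}$. Since $\binom{m+2y-1}{m-1} = \tfrac{m+2y-1}{2y}\binom{m+2y-2}{m-1}$, the strict inequality $\binom{m+2y-1}{m-1} > 5y\binom{m+2y-2}{m-1}$ is equivalent to $m+2y-1 > 10y^2$, i.e.\ $m > 10y^2-2y+1$, which is precisely the hypothesis. Hence~\eqref{eq: Bezoutian inequality} holds, and Theorem~\ref{theorem: Dimension bound of Bezoutian from hyperbolic polynomial} produces a non-negative homogeneous polynomial of degree $2y$ in $m$ variables that is not a degree-restricted hyperzout. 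I do not anticipate a genuine obstacle here; the only care needed is the bookkeeping in the middle step, since the threshold $10y^2-2y+1$ is exactly what the two bounds ``largest exponent $\leq 2y-1$'' and ``at most $5y$ terms'' yield, and a looser estimate of either quantity would give a weaker conclusion. This is consistent with the paper's own framing of Theorem~\ref{theorem: m_y no Bezoutian} as a deliberately coarse ``$m$ large'' statement rather than a sharp one.
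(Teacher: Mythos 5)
Your proposal is correct and follows essentially the same approach as the paper: both bound every binomial summand by $\binom{m+2y-2}{m-1}$ using $ik\leq dk\leq 2y-1$, count at most $5y$ summands, and close via the ratio identity $\binom{m+2y-1}{m-1}=\tfrac{m+2y-1}{2y}\binom{m+2y-2}{m-1}$. The only differences are minor bookkeeping (you count terms via $\mu+2d+1\leq 5\mu-5\leq 5y$ whereas the paper bounds $d\leq 2y-1$ and $\mu\leq y+1$ separately), and your aside verifying $\tilde{\Omega}_y^B\neq\emptyset$ is a harmless extra check the paper omits.
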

\begin{proof}
Our strategy will be to show that inequality~\eqref{eq: Bezoutian inequality} follows from the assumption that $m>10y^2-2y+1$.
We first note that if $y>1$, then $m>10y^2-2y+1$ implies that $2y>2$, $m>2$ and $(m,2y)\neq (3,4)$.

Dividing by $\binom{2y+m-1}{m-1}$, we have that~\eqref{eq: Bezoutian inequality} is equivalent to
\[1>J\coloneqq \frac{\sum_{i=0}^{\mu-1} \binom{m+ik-1}{m-1}}{\binom{2y+m-1}{m-1}}+\frac{\sum_{i=0}^{d} \binom{m+ik-1}{m-1}}{\binom{2y+m-1}{m-1}}+\frac{\sum_{i=0}^{d-1}\binom{m+ik-1}{m-1}}{\binom{2y+m-1}{m-1}}.\] 

From Lemma~\ref{lem:binom-ineq}, since $\binom{m+ik-1}{m-1}$ increases monotonically with $k\in \N$, we can bound each sum of the form $\sum_{i=0}^{\ell}\binom{m+ik-1}{m-1}$ above by $(\ell+1)\binom{m+i\ell-1}{m-1}$. This gives
\begin{equation*}
   J\leq \frac{\mu\binom{m-1+y}{m-1}+(2d+1)\binom{m-1+dk}{m-1}}{\binom{m-1+2y}{m-1}}.
   \end{equation*}

  From Remark~\ref{remark: explanation of degree restriction on degree restricted hyperzouts}, we know that $dk\leq 2y-1$. Since, in addition, $y\leq 2y-1$ for any positive integer $y$, we have 
   \begin{equation*}
       J\leq \frac{\mu\binom{m-1+y}{m-1}+(2d+1)\binom{m-1+dk}{m-1}}{\binom{m-1+2y}{m-1}} \leq 
\frac{(2d+\mu+1)\binom{m-1+(2y-1)}{m-1}}{\binom{m-1+2y}{m-1}}
   =\frac{(2d+\mu+1)2y}{m-1+2y}.
\end{equation*}
To complete the proof, it suffices to show that $m>10y^2-2y+1$ implies $\frac{(2d+\mu+2)2y}{m-1+2y}<1$. We proceed by recognizing that $d\leq dk\leq 2y-1$ and $y = (\mu-1)k \geq \mu-1$ for $k\geq 1$. Therefore,
\begin{align*}
    \frac{(2d+\mu+1)2y}{m-1+2y} \leq \frac{10y^2}{m-1+2y} < 1
\end{align*}
whenever $m > 10y^2-2y+1$.

\end{proof}
\section{A non-negative quartic that is not a sum of hyperwrons}
\label{section: An example non-negative quartic homogeneous polynomial that is not hyperwron}

Theorem~\ref{theorem: m_y no Wronskian} shows that there are non-negative polynomials that are not hyperwrons by showing that the non sum-of-squares components of the set of hyperwrons form a set that is not full dimensional in the ambient space of non-negative polynomials. 

Any sum of hyperwrons is, of course, still a non-negative polynomial. It is, therefore, reasonable to ask whether we get more non-negative polynomials by considering sums of hyperwrons, rather than just hyperwrons. 

To formalise this question, consider the conic hull of hyperwrons, $\textup{cone}(\mathcal{W}_{m,2y})$. This is a convex cone lying between the cone of sums of squares and the cone of all non-negative polynomials. By Carath\'eodory's theorem, any element of $\textup{cone}(\mathcal{W}_{m,2y})$ is the sum of at most  $r=\dim(\textup{cone}(\mathcal{W}_{m,2y})) = \binom{m+2y-1}{2y}$ extreme elements of $\textup{cone}(\mathcal{W}_{m,2y})$. For any set $S$, the extreme rays of $\textup{cone}(S)$ are generated by elements of $S$~\cite[Corollary 17.1.2]{Rockafellar1970ConvexAnalysis}. Therefore any element of $\textup{cone}(\mathcal{W}_{m,2y})$ is a sum of at most $r$ hyperwrons.

Since the conic hull of hyperwrons is full-dimensional, the techniques used to prove Theorem~\ref{theorem: m_y no Wronskian} do not rule out the possibility that all non-negative polynomials are sums of hyperwrons. In this section, however, we show that this is false by giving an explicit example of a non-negative quartic form in 16 variables that is not a sum of hyperwrons. To show that our example is not a sum of hyperwrons, we show that it is not a hyperwron (Theorem~\ref{theorem: example polynomial is not a hyperwron}) and that it generates an extreme ray in the cone of non-negative polynomials (Proposition~\ref{proposition: example polynomial is an extreme ray}). Together, by appealing to the definition of extreme rays, these observations imply that our example is not a sum of hyperwrons.

\subsection{Structure of degree four hyperwrons}

In this section we consider the structure of hyperwrons of degree four. For a hyperwron of degree four, there are only two combinations of the degree $d$ of hyperbolic polynomial $p$ and the degree $k$ of the map $\phi$ that can be used in the construction, namely where $d=2,k=2$ and $d=3,k=1$. The $d=2$ case corresponds to hyperwrons that are sums of squares. 
We will show that hyperwrons generated by hyperbolic polynomials of degree $d=3$ can be written in a particular structured form (see Theorem~\ref{theorem: Hyperwron representation of cubic hyperbolic polynomial, linear map}) that provides a potential obstruction to being a hyperwron. 

We begin with a technical observation about the directional derivatives of cubic hyperbolic polynomials.

\begin{proposition}
\label{proposition: directional derivative of cubic hyperbolic polynomial}
    Let $p\in \textup{Hyp}_{n,3}(e)$, and let $u,v\in \Lambda_+(p,e)$. Either $D_{uv}^2p(x)=0$ for all $x$ or there exist $q\in \Sigma_{n,2}$ and $\alpha \in F_{n,1}$ such that $D_up(x)=-q(x)+\alpha(x)D_{uv}^2p(x)$.
\end{proposition}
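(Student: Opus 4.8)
The plan is to exploit the fact that $D_up$ is a hyperbolic polynomial of degree $2$ — that is, a quadratic form in the relevant sense — and to understand its signature. First I would use Proposition~\ref{prop: boundary of hyperbolicity cone directional derivative relaxation}: since $u\in\Lambda_+(p,e)$, either $D_up$ is identically zero (in which case $D_{uv}^2p = D_v(D_up)$ is also identically zero and we are done), or $D_up$ is hyperbolic with respect to $e$ with $\Lambda_+(D_up,e)\supseteq\Lambda_+(p,e)$. So assume $D_up\not\equiv 0$; then $D_up\in\textup{Hyp}_{n,2}(e)$ is a quadratic form, and every quadratic hyperbolic polynomial is (up to positive scaling) of the form $\langle e,x\rangle^2 - Q(x)$ where $Q$ is positive semidefinite — equivalently, $D_up$ has Lorentzian signature $(1,r,\ast)$ for some $r\geq 0$ (one positive eigenvalue, the rest nonpositive). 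This is the structural fact I would isolate as a lemma: a nonzero $f\in\textup{Hyp}_{n,2}(e)$ can be written $f(x) = \ell(x)^2 - \sum_i m_i(x)^2$ for linear forms $\ell, m_i$, with $\ell(x) = D_up(x)/\sqrt{D_up(e)}$ essentially (since $D_{ee}^2(D_up) = D_{ee}^2 D_u p = \text{const}$ recovers the coefficient of the square term along $e$).

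Next I would bring in $v$. Since $v\in\Lambda_+(p,e)\subseteq\Lambda_+(D_up,e)$, the direction $v$ lies in the hyperbolicity cone of the quadratic $D_up$. For a quadratic form of Lorentzian signature, a point $v$ in the hyperbolicity cone satisfies $D_up(v)\geq 0$, and moreover the "polarization" $D_v(D_up)(x) = D_{uv}^2p(x)$ is, up to sign, controlled by the geometry: writing $D_up(x) = \ell(x)^2 - \sum_i m_i(x)^2$, we get $D_{uv}^2p(x) = 2\ell(v)\ell(x) - 2\sum_i m_i(v)m_i(x)$, a linear form. The key identity I am after comes from completing the square / a Cauchy–Schwarz-type manipulation: I want to show $D_up(x) = -q(x) + \alpha(x)\,D_{uv}^2p(x)$ where $q$ is a sum of squares and $\alpha$ is linear. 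The natural candidate is $\alpha(x) = \ell(x)/(2\ell(v))$ when $\ell(v)\neq 0$, since then $\alpha(x)D_{uv}^2p(x) = \ell(x)^2 - (1/\ell(v))\ell(x)\sum_i m_i(v)m_i(x)$, and subtracting this from... — actually one rearranges to isolate $\ell(x)^2$, substitutes back into $D_up(x)=\ell(x)^2-\sum m_i(x)^2$, and checks that the remaining terms $-\sum_i m_i(x)^2 + (\text{cross term})$ assemble into the negative of a sum of squares using that $(m_i(v))$ is a fixed vector. The case $\ell(v)=0$ needs separate (easier) handling: then $v$ lies on the cone boundary in a degenerate way and one argues $D_{uv}^2p$ and the structure simplify.

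The main obstacle I anticipate is handling the degenerate cases cleanly — namely when $D_up(v) = 0$, or more specifically when the linear form $\ell$ vanishes at $v$ (equivalently $D_up(e)$ interacts badly), and when $D_{uv}^2p\equiv 0$ but $D_up\not\equiv 0$. In that last situation the claimed identity forces $D_up = -q$ with $q$ a sum of squares, i.e. $D_up$ must be negative semidefinite; but $D_up$ is hyperbolic with $D_up(e)>0$, so it is \emph{not} negative semidefinite unless it is zero — so I must rule out "$D_{uv}^2p\equiv 0$ while $D_up\not\equiv 0$". This should follow because $D_{uv}^2p = D_v(D_up)$, and $v$ lies in the interior-or-boundary of $\Lambda_+(D_up,e)$; if $D_v(D_up)\equiv 0$ for the Lorentzian quadratic $D_up$, then $v$ must lie in the radical (kernel) of $D_up$, but cone points in the radical would force $D_up(e)$-type contradictions unless $D_up\equiv 0$. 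Making this airtight — correctly enumerating which $(u,v)$ pairs land in which case, and verifying the sum-of-squares assembly in each — is the part that needs care; the algebraic identity itself, once the right $\alpha$ is guessed, should be a routine expansion.
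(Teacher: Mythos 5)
Your overall framework matches the paper's: decompose $D_up$ as a Lorentzian quadratic $\ell(x)^2-\sum_i m_i(x)^2$, use Proposition~\ref{prop: boundary of hyperbolicity cone directional derivative relaxation} to get $v\in\Lambda_+(D_up,e)$ hence $D_up(v)=\ell(v)^2-\sum_i m_i(v)^2\geq 0$, and split on whether $\ell(v)\neq 0$. But two things in the proposal do not go through as written.

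First, the candidate $\alpha(x)=\ell(x)/(2\ell(v))$ is wrong. With $D_{uv}^2p(x)=2\ell(v)\ell(x)-2\sum_i m_i(v)m_i(x)$, plugging it in gives
\[
-D_up(x)+\alpha(x)D_{uv}^2p(x)=\sum_i m_i(x)^2-\frac{\ell(x)}{\ell(v)}\sum_i m_i(v)m_i(x),
\]
and this is generally \emph{not} a sum of squares: fix $m_i(x)=m_i(v)$ and send $\ell(x)\to+\infty$ and the expression tends to $-\infty$ (unless all $m_i(v)=0$, in which case $\ell(v)$ is forced to vanish by $D_up(v)\geq 0$, putting you in the other case). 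The correct choice, which the paper uses, is
\[
\alpha(x)=\frac{1}{2\ell(v)^2}\Bigl(\ell(v)\ell(x)+\sum_i m_i(v)m_i(x)\Bigr),
\]
so that $\alpha(x)D_{uv}^2p(x)=\ell(x)^2-\frac{1}{\ell(v)^2}\bigl(\sum_i m_i(v)m_i(x)\bigr)^2$ becomes a difference of squares, and then $q(x)=\sum_i m_i(x)^2-\frac{1}{\ell(v)^2}\bigl(\sum_i m_i(v)m_i(x)\bigr)^2$ is verified nonnegative via $\ell(v)^2\geq\sum_i m_i(v)^2$ and Cauchy--Schwarz. Without that extra $\sum m_i(v)m_i(x)$ term in $\alpha$, the SOS assembly you allude to simply fails.

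Second, you misread the logical shape of the proposition and therefore chase a false obstruction. You say you ``must rule out $D_{uv}^2p\equiv 0$ while $D_up\not\equiv 0$'' because the identity would force $D_up$ to be negative semidefinite. But the proposition is a disjunction: if $D_{uv}^2p\equiv 0$ you are simply in the first alternative and there is nothing to prove. And this case genuinely occurs: when $\ell(v)=0$, the inequality $\ell(v)^2\geq\sum_i m_i(v)^2$ forces $m_i(v)=0$ for all $i$, and then $D_{uv}^2p\equiv 0$ even though $D_up$ can be a nonzero Lorentzian quadratic. The paper handles exactly this as its Case~2 and concludes immediately. Trying to ``rule out'' this case, rather than recognizing it lands you in the first disjunct, would lead you to a dead end.
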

\begin{proof}
Since $p(x)$ is cubic, $D_up(x)$ is either quadratic or identically zero. If $D_up(x)$ is identically zero, then the result holds trivially. So assume that $D_up(x)$ is not identically zero. It is a basic fact about quadratic hyperbolic polynomials~\cite[p.~958]{Garding1959AnPolynomials} that any quadratic hyperbolic polynomial can be written in the form
$D_up(x)=\langle a_1,x\rangle^2-\sum_{i=2}^n \langle a_i, x\rangle^2$ where $a_i\in \R^n$. 
 
For convenience of notation, let $w_i = \langle a_i,v\rangle$ for $i=1,2,\ldots,n$. 

Since $u,v\in \Lambda_+(p,e)$ it follows from Proposition~\ref{prop: boundary of hyperbolicity cone directional derivative relaxation} that $\Lambda_+(D_u p,e)\supseteq \Lambda_+(p,e)$ and so that $v\in \Lambda_+(D_up,e)$. Therefore $D_up(v) = w_1^2-\sum_{i=2}^n w_i^2\geq 0$. 
We consider two cases: either $w_1 = 0$ or $w_1 \neq 0$.

\textbf{Case 1: $w_1\neq 0$.} In this case,

\begin{align*}
   q(x):=\sum_{i=2}^n \langle a_i,x\rangle^2 - \frac{(\sum_{i=2}^n \langle a_i,x\rangle w_i)^2}{w_1^2} & \geq 
     \sum_{i=2}^n \langle a_i,x\rangle^2-\frac{(\sum_{i=2}^n \langle a_i,x\rangle w_i)^2}{\sum_{i=2}^n w_i^2}\\
     & = \frac{(\sum_{i=2}^n\langle a_i,x\rangle^2)(\sum_{i=2}^n w_i^2)-(\sum_{i=2}^n \langle a_i,x\rangle w_i)^2}{\sum_{i=2}^n w_i^2}\\
     & \geq 0,
\end{align*}
where the first inequality uses $w_1^2 - \sum_{i=2}^nw_i^2 \geq 0$ and the last inequality comes from the Cauchy-Schwarz inequality.

We have established that $q$ is a globally non-negative quadratic form, and hence $q$ is a sum of squares.

We aim to represent $D_up(x)$ as $-q(x) + \alpha(x)D_{uv}^2p(x)$ for some linear form $\alpha$.
Note that 
\[ D_{uv}^2p(x) = 2w_1\langle a_1,x\rangle - 2\sum_{i=2}^{n}w_i\langle a_i,x\rangle.\]
We write
\begin{align*}
w_1^2\langle a_1,x\rangle^2
    &=\left(\langle a_1,x\rangle w_1+\sum_{i=2}^n\langle a_i,x\rangle 
 w_i \right)\left(\langle a_1,x\rangle 
 w_1 -\sum_{i=2}^n\langle a_i,x\rangle w_i \right)+\left(\sum_{i=2}^nw_i\langle a_i,x\rangle\right)^2\\
    &=\left(\langle a_1,x\rangle w_1+\sum_{i=2}^n\langle a_i,x\rangle w_i \right)\frac{1}{2}D_{uv}^2p(x)+\left(\sum_{i=2}^nw_i\langle a_i,x\rangle\right)^2.
\end{align*}
Let $\alpha(x) = \frac{1}{2w_1^2}\left(\langle a_1,x\rangle w_1+\sum_{i=2}^n\langle a_i,x\rangle w_i \right)$.
Therefore,
\begin{align*}
    D_up(x) & =\langle a_1,x\rangle^2-\sum_{i=2}^n \langle a_i , x\rangle^2\\
    & = \alpha(x) D_{uv}^2p(x) - \sum_{i=2}^n \langle a_i , x\rangle^2 + \frac{1}{w_1^2}\left(\sum_{i=2}^nw_i\langle a_i,x\rangle\right)^2\\
    & = \alpha(x)D_{uv}^2p(x) - q(x).
\end{align*}

\textbf{Case 2: $w_1=0$.} In this case, since $w_1^2\geq \sum_{i=2}^n w_i^2$ it follows that  $w_i=0$ for $1\leq i\leq n$. Then $D_up(v) = 0$ and 
\[ D_{uv}^2p(x) = 2w_1\langle a_1,x\rangle - 2\sum_{i=2}^{n}w_i\langle a_i,x\rangle = 0 \quad\textup{for all $x$},\]
completing the proof. 

\end{proof}
The following simple fact about quadratic forms will be useful in what follows.
\begin{lemma}
\label{lem:qform-fact}
    Let $q\in F_{n,2}$ be a quadratic form and let $l\in F_{n,1}$ be a linear form. If $q(x)\geq 0$ whenever $l(x)=0$, then there exists a sum of squares $s\in \Sigma_{n,2}$ and a linear form $\alpha\in F_{n,1}$ such that 
    \[ q(x) = s(x) + l(x)\alpha(x)\quad\textup{for all $x\in \R^n$}.\]
\end{lemma}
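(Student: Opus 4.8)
The plan is to reduce to a statement about quadratic forms on the hyperplane $\{l=0\}$. First I would handle the degenerate case: if $l$ is identically zero, then the hypothesis says $q$ is globally non-negative, so $q$ is a sum of squares by Hilbert's theorem (quadratic forms), and we may take $s=q$ and $\alpha=0$. So assume $l\neq 0$. After an invertible linear change of coordinates we may assume $l(x)=x_n$; note that being a sum of squares is preserved under invertible linear substitution, and the claimed identity $q=s+l\alpha$ also transforms correctly, so it suffices to prove the result in these coordinates.

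In the coordinates where $l(x)=x_n$, write $q(x) = \tilde q(x_1,\ldots,x_{n-1}) + x_n\beta(x_1,\ldots,x_{n-1}) + c\,x_n^2$ where $\tilde q\in F_{n-1,2}$, $\beta\in F_{n-1,1}$, and $c\in\R$. The hypothesis that $q(x)\geq 0$ whenever $x_n=0$ says precisely that $\tilde q$ is a globally non-negative quadratic form in $n-1$ variables, hence $\tilde q = \sum_i r_i(x_1,\ldots,x_{n-1})^2$ is a sum of squares (again by Hilbert). Then I would simply set $s(x) = \tilde q(x_1,\ldots,x_{n-1})$, which is visibly a sum of squares in $F_{n,2}$, and $\alpha(x) = \beta(x_1,\ldots,x_{n-1}) + c\,x_n$, which is a linear form. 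With these choices, $l(x)\alpha(x) = x_n\beta(x_1,\ldots,x_{n-1}) + c\,x_n^2$, so $s(x) + l(x)\alpha(x) = q(x)$ for all $x$, as required.

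Honestly, there is no serious obstacle here: the only subtlety is making sure the change of coordinates is handled cleanly (invertibility, and the fact that the three properties involved — being $q$, being $l$, being a sum of squares — all behave well under linear substitution), and the computation is then a one-line split of $q$ by the last coordinate. If one prefers to avoid coordinates entirely, an equivalent argument is: choose any vector $e$ with $l(e)=1$, decompose $\R^n = \ker(l)\oplus \R e$, write $q(x) = q(x - l(x)e) + l(x)\cdot\big(2B(x - l(x)e, e) + l(x)q(e)\big)$ where $B$ is the symmetric bilinear form of $q$, observe that $x\mapsto q(x-l(x)e)$ is a non-negative quadratic form (it is the pullback of the restriction of $q$ to $\ker(l)$, which is non-negative by hypothesis) hence a sum of squares, and take $s(x)=q(x-l(x)e)$ and $\alpha(x) = 2B(x-l(x)e,e)+l(x)q(e)$, a linear form. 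Either way the proof is short; I would likely present the coordinate-free version for elegance.
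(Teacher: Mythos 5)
Your proof is correct and is essentially the same argument as the paper's: both split $q$ along the hyperplane $\ker(l)$ and a complementary direction, observe that the restriction to $\ker(l)$ is a non-negative quadratic form hence a sum of squares, and absorb the cross and $l^2$ terms into $l\cdot\alpha$. The paper happens to use the orthogonal projection onto $\ker(l)$ in matrix form, whereas you use a coordinate change (or, equivalently, an oblique decomposition along a chosen $e$ with $l(e)=1$), but this is a cosmetic difference rather than a different route.
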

\begin{proof}
If $l\in F_{n,1}$ is identically zero, then $q(x) \geq 0$ for all $x$, and so $q(x)=s(x)$ is a sum of squares. 

Next, we assume that $l$ is not identically zero.
Let $Q$ be a symmetric matrix such that $x^\intercal Q x = q(x)$ for all $x$. Let $\ell\in \R^n\setminus\{0\}$ be such that $\ell^\intercal x = l(x)$ for all $x$ and let $\hat{\ell} = \ell/\|\ell\|$ be the corresponding unit vector.
    Let $L = \{x\in \R^n\,|\,l(x)=0\}$ and let $P_L$ and $P_{L^\perp}$ denote the orthogonal projectors onto $L$ and $L^\perp$, respectively. Note that $P_{L^\perp}x = \hat{\ell}(\hat{\ell}^\intercal x)$. Moreover, the assumption that $q(x)\geq 0$ for all $x\in L$ is equivalent to $s(x):= x^\intercal P_L Q P_L x \geq 0$ for all $x$. Therefore $s$ is a sum of squares.
    
    Since $P_L+P_{L^\perp} = I$, 
    \begin{align*}
 x^\intercal Q x & = x^\intercal P_L Q P_L x + x^\intercal P_{L^\perp} Q P_{L}x + x^\intercal P_L Q P_{L^\perp} x + x^\intercal P_{L^\perp}QP_{L^\perp}x\\
 & = x^\intercal P_L Q P_L x + (\hat{\ell}^\intercal x)(\hat{\ell}^\intercal QP_{L}x) + (\hat{\ell}^\intercal QP_L x)(\hat{\ell}^\intercal x) + (\hat{\ell}^\intercal x)^2\hat{\ell}^\intercal Q \hat{\ell}\\
 & = s(x) + (\ell^\intercal x)\frac{1}{\|\ell\|}\left[2\hat{\ell}^\intercal QP_L x + (\hat{\ell}^\intercal Q\hat{\ell}) \hat{\ell}^\intercal x\right]\\
  & = s(x) + \frac{l(x)}{\|\ell\|}\left[2\hat{\ell}^\intercal QP_L x + (\hat{\ell}^\intercal Q\hat{\ell}) \hat{\ell}^\intercal x\right].
    \end{align*}
    This has the desired form, completing the proof. 
\end{proof}

We now use the result of Proposition~\ref{proposition: directional derivative of cubic hyperbolic polynomial} to show that the Wronskians of hyperbolic cubics can be written in a particular form. Indeed, for each Wronskian $f$ of a hyperbolic cubic, there exists a codimension one subspace such that $f$ is a product of sums of squares when restricted to that subspace.
\begin{proposition}
    \label{proposition: Hyperwron representation of cubic hyperbolic polynomial}
    Let $p\in \textup{Hyp}_{n,3}(e)$, and let $u,v\in \Lambda_+(p,e)$. Then there exist sums of squares $q_1,q_2\in \Sigma_{n,2}$, a linear form $l\in F_{n,1}$, and a cubic form $r \in F_{n,3}$, such that the Wronskian $f(x)=D_up(x)D_vp(x)-p(x)D_{uv}^2p(x)$ can be represented as 
    \[f(x)=q_1(x)q_2(x)+r(x)l(x).\] Moreover, if $D_{uv}^2p(x)\neq 0$, one can take $l(x)=D_{uv}^2p(x)$.
\end{proposition}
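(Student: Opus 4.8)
The plan is to split the argument according to whether the linear form $D^2_{uv}p$ is identically zero.

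\emph{Case 1: $D^2_{uv}p\not\equiv 0$.} Here I would apply Proposition~\ref{proposition: directional derivative of cubic hyperbolic polynomial} twice. Applied to the ordered pair $(u,v)$ it gives $D_up=-q+\alpha\,D^2_{uv}p$ for some $q\in\Sigma_{n,2}$ and $\alpha\in F_{n,1}$; applied to the pair $(v,u)$ — legitimate since the hypotheses on $u,v$ are symmetric and $D^2_{vu}p=D^2_{uv}p$, so the first alternative of that proposition is excluded — it gives $D_vp=-\tilde q+\tilde\alpha\,D^2_{uv}p$ with $\tilde q\in\Sigma_{n,2}$ and $\tilde\alpha\in F_{n,1}$. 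Substituting both expressions into $f=D_up\,D_vp-p\,D^2_{uv}p$ and expanding, every term other than $q\tilde q$ is divisible by $D^2_{uv}p$; collecting gives
\[ f=q\tilde q+D^2_{uv}p\,\bigl(\alpha\tilde\alpha\,D^2_{uv}p-q\tilde\alpha-\tilde q\,\alpha-p\bigr), \]
which is of the required form with $q_1=q$, $q_2=\tilde q$, $l=D^2_{uv}p\in F_{n,1}$, and $r\in F_{n,3}$ the parenthesised cubic (a quick degree count, noting $D^2_{uv}p$ has degree $1$, confirms the homogeneity). This simultaneously establishes the ``Moreover'' clause.

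\emph{Case 2: $D^2_{uv}p\equiv 0$.} Then $f=D_up\cdot D_vp$, which is globally non-negative because it is the Wronskian of a hyperbolic cubic. If $D_up\equiv 0$ or $D_vp\equiv 0$ we take $q_1=q_2=0$, $r=0$ and $l=0$. Otherwise $D_up$ and $D_vp$ are nonzero quadratic forms hyperbolic with respect to $e$: this follows from the standard facts on directional derivatives when $u,v\in\Lambda_{++}(p,e)$ and from Proposition~\ref{prop: boundary of hyperbolicity cone directional derivative relaxation} when $u$ or $v$ lies on $\partial\Lambda_+(p,e)$. If $D_up$ is semidefinite, say $D_up\succeq 0$ (the case $D_up\preceq 0$ being symmetric after negating both factors), then $D_up$ is a sum of squares, and since $\{D_up>0\}$ is open dense and $D_up\,D_vp\ge 0$ we get $D_vp\ge 0$ everywhere, so $D_vp$ too is a sum of squares and $f=q_1q_2$ with $l=r=0$. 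If instead $D_up$ is indefinite, I would use the normal form of a hyperbolic quadratic~\cite[p.~958]{Garding1959AnPolynomials} to write $D_up=L^2-M$ with $L\in F_{n,1}$ and $M\in\Sigma_{n,2}$; indefiniteness prevents $M$ from vanishing identically on $\{L=0\}$ (otherwise $M=cL^2$ and $D_up=(1-c)L^2$ would be semidefinite), so $D_up=-M<0$ on a relatively dense subset of $\{L=0\}$, whence non-negativity of $f$ forces $D_vp\le 0$ on $\{L=0\}$. Lemma~\ref{lem:qform-fact} applied to $-D_vp$ and $L$ then gives $D_vp=-s-L\beta$ with $s\in\Sigma_{n,2}$, $\beta\in F_{n,1}$, and substituting yields $f=(L^2-M)(-s-L\beta)=Ms+L\,(M\beta-Ls-L^2\beta)$, i.e.\ the claimed form with $q_1=M$, $q_2=s$, $l=L$ and $r=M\beta-Ls-L^2\beta\in F_{n,3}$.

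The algebraic manipulations and degree bookkeeping are routine. I expect the main obstacle to be Case~2, where Proposition~\ref{proposition: directional derivative of cubic hyperbolic polynomial} gives no information and one must instead analyse the product $D_up\cdot D_vp$ of two hyperbolic quadratics directly — identifying the correct ``witness'' hyperplane $\{L=0\}$ and feeding it to Lemma~\ref{lem:qform-fact} — while also taking care to certify that $D_up$ and $D_vp$ really are hyperbolic, which is precisely where Proposition~\ref{prop: boundary of hyperbolicity cone directional derivative relaxation} is needed to cover boundary directions $u,v$.
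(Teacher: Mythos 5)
Your proof is correct and follows essentially the same route as the paper's: the $D^2_{uv}p\not\equiv 0$ case via two applications of Proposition~\ref{proposition: directional derivative of cubic hyperbolic polynomial} and the substitution is identical, and your $D^2_{uv}p\equiv 0$ case is the paper's argument with the subcase split rephrased as ``$D_up$ semidefinite vs.\ indefinite'' (which, given $D_up=\langle a_1,x\rangle^2-q_1$ with $q_1\in\Sigma_{n,2}$, is precisely the paper's dichotomy of whether $q_1$ vanishes on $a_1^\perp$), feeding the same hyperplane into Lemma~\ref{lem:qform-fact}. The only cosmetic difference is that in the semidefinite subcase you conclude $D_vp$ is itself a sum of squares (so $r=l=0$), whereas the paper absorbs this subcase into the $r\cdot l$ term; both are valid.
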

\begin{proof}
\textbf{Case 1:} Suppose $D_{uv}^2p(x)=0$ for all $x\in \R^n$. Then 
$f(x)=D_up(x)D_vp(x)$. If $D_up(x)$ is identically zero, then $f$ trivially has the desired form, so we assume that $D_up(x)$ is not identically zero. 
Since $D_up(x)$ is a quadratic hyperbolic polynomial (by Proposition~\ref{prop: boundary of hyperbolicity cone directional derivative relaxation}), it can be expressed in the form
\[ D_up(x) = \langle a_1,x\rangle^2 - \sum_{i=2}^{n}\langle a_i,x\rangle^2\]
for some $a_1,a_2,\ldots,a_n\in \R^n$, at least one of which is non-zero~\cite[p.958]{Garding1959AnPolynomials}. Therefore, if we let $q_1(x) = \sum_{i=2}^{n}\langle a_i,x\rangle^2$, we have
\begin{equation}
    \label{eq:Duvzero-case}
f(x) = D_up(x)D_vp(x) = q_1(x)\left(-D_vp(x)\right) + \langle a_1,x\rangle \left(\langle a_1,x\rangle D_vp(x)\right).
\end{equation} 
Let $a_1^\perp \coloneqq\{x\in \R^n:\langle a_1,x\rangle=0\}$. We consider cases according to the behaviour of $q_1(x)$ when restricted to $a_1^\perp$.

\textbf{Case 1a:} Suppose $q_1$ is identically zero when restricted to $a_1^\perp$. This implies that $q_1(x) = \langle a_1,x\rangle \alpha(x)$ for some linear form $\alpha\in F_{n,1}$. Then, 
\[ f(x) = \langle a_1,x\rangle (\langle a_1,x\rangle-\alpha(x)) D_{v}p(x).\]
This is in the desired form with $l(x) = \langle a_1,x\rangle$ and $r(x) = (\langle a_1,x\rangle - \alpha(x)) D_vp(x)$.

\textbf{Case 1b:} Otherwise, assume that the restriction of $q_1$ to $a_1^\perp$ is not identically zero. 
Since $f$ is a hyperwron, it is non-negative. It follows from~\eqref{eq:Duvzero-case} that $f(x) = q_1(x)(-D_vp(x)) \geq 0$ whenever $x\in a_1^\perp$. Next, we will show that $-D_vp(x) \geq 0$ whenever $x\in a_1^\perp$. To do this, we argue by contradiction. Suppose $-D_vp(x) < 0$ for some $x\in a_1^\perp$. Then, by continuity, there exists $\epsilon>0$ such that $-D_vp(y) <0$ for all $y\in \mathcal{B}_x(\epsilon) = \{y\in a_1^\perp\;:\;\|y-x\|< \epsilon\}$.
Since $q_1(x)(-D_vp(x)) \geq 0$ for all $y\in \mathcal{B}_x$ and $q_1$ is a sum of squares, it follows that $q_1(y) = 0$ for all $y\in \mathcal{N}_x$. Since $\mathcal{N}_x$ is an open subset of $a_1^\perp$, it follows that $q_1$ is identically zero on $a_1^\perp$, a contradiction. Therefore, we can conclude that $-D_vp(x) \geq 0$ whenever $x\in a_1^\perp$.

Lemma~\ref{lem:qform-fact} then tells us that 
\[ -D_vp(x) = q_2(x) + \langle a_1,x\rangle \alpha(x)\]
for some $\alpha \in F_{n,1}$ and a sum of squares $q_2\in \Sigma_{n,2}$. Overall, then, we have
\[ f(x) =q_1(x)q_2(x) + \langle a_1,x\rangle\left(\langle a_1,x\rangle D_vp(x) + \alpha(x)q_1(x)\right),\]
which has the desired form with $l(x) = \langle a_1,x\rangle$ and $r(x) = \langle a_1,x\rangle D_vp(x) + \alpha(x)q_1(x)$.

\textbf{Case 2:} Otherwise, assume that $D_{uv}^2p(x)$ is not identically zero. Proposition~\ref{proposition: directional derivative of cubic hyperbolic polynomial} asserts that there exist 
$q_1\in \Sigma_{n,2}$ and $\alpha_1\in F_{n,1}$ such that $D_up(x) = -q_1(x) + \alpha_1(x)D_{uv}^2p(x)$. Similarly, by exchanging the roles of $u$ and $v$, there exist $q_2\in \Sigma_{n,2}$ and $\alpha_2\in F_{n,1}$ such that $D_vp(x) = -q_2(x) + \alpha_2(x)D_{uv}^2p(x)$.

We can then express the Wronskian $f$ as 
\begin{align*}
    f(x)&=D_up(x)D_vp(x)-p(x)D_{uv}^2p(x)\\
    &=\left(-q_1(x)+\alpha_1(x)D_{uv}^2p(x)\right)\left(-q_2(x)+\alpha_2(x)D_{uv}^2p(x)\right)-p(x)D_{uv}^2p(x)\\
    &=q_1(x)q_2(x)+D_{uv}^2p(x) \left(-q_1(x)\alpha_2(x)-q_2(x)\alpha_1(x)+\alpha_1(x)\alpha_2(x)D_{uv}^2p(x)-p(x)  \right).
\end{align*}
The result follows by setting $r(x)=-q_1(x)\alpha_2(x)-q_2(x)\alpha_1(x)+\alpha_1(x)\alpha_2(x)D_{uv}^2p(x)-p(x)$ and $l(x)=D_{uv}^2(x)$.   
\end{proof}
We can translate this from a statement about Wronskians of hyperbolic cubics into a statement about hyperwrons of degree four.

\begin{theorem}
        \label{theorem: Hyperwron representation of cubic hyperbolic polynomial, linear map}
    Let $\tilde{f}\in \mathcal{W}_{m,4}$ be a hyperwron of degree four. Then either $\tilde{f}$ is a sum of squares or there exist sums of squares  $\Tilde{q}_1, \Tilde{q}_2\in \Sigma_{m,2}$, a linear form $\Tilde{l}\in F_{m,1}$, and a cubic form $\Tilde{r}\in F_{m,3}$ such that 
    \begin{equation}
    \label{eq: cubic hyperwron representation}
\tilde{f}(x)=\Tilde{q}_1(x)\Tilde{q}_2(x)+\Tilde{r}(x)\Tilde{l}(x).
    \end{equation}

\end{theorem}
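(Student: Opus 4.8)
The plan is to combine the known decomposition of degree-four hyperwrons by ``degree type'' with Proposition~\ref{proposition: Hyperwron representation of cubic hyperbolic polynomial}, pushing the latter forward along composition with the polynomial map. For $y=2$ the index set is $\Omega_2^W = \{(d,k)\in\N^2 : (d-1)k = 2\} = \{(2,2),(3,1)\}$, so by~\eqref{eq:hyperwron-basic-decomp} any $\tilde f\in\mathcal{W}_{m,4}$ lies in $\Theta(\mathcal{S}_{e,W}^{n,m,2,2})$ or in $\Theta(\mathcal{S}_{e,W}^{n,m,3,1})$ for some $n\geq 1$. In the first case, Lemma~\ref{lem:sos-hyperwon-hyperzout} (applied with $s=2$) gives $\Theta(\mathcal{S}_{e,W}^{n,m,2,2})\subseteq\Sigma_{m,4}$, so $\tilde f$ is a sum of squares and there is nothing more to prove.

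Next I would handle the remaining case $\tilde f\in\Theta(\mathcal{S}_{e,W}^{n,m,3,1})$. By Definition~\ref{definition: Wronskian certificate} and~\eqref{eq:theta-def} there exist a cubic hyperbolic polynomial $p\in\textup{Hyp}_{n,3}(e)$, elements $u,v\in\Lambda_+(p,e)$, and a \emph{homogeneous linear} map $\phi\in F_{m,1}^n$ such that $\tilde f(x) = f(\phi(x))$, where $f(z) = D_up(z)D_vp(z) - p(z)D_{uv}^2p(z)$ is the Wronskian of the hyperbolic cubic $p$. Proposition~\ref{proposition: Hyperwron representation of cubic hyperbolic polynomial}, applied to $p$, $u$, $v$, then produces sums of squares $q_1,q_2\in\Sigma_{n,2}$, a linear form $l\in F_{n,1}$, and a cubic form $r\in F_{n,3}$ with $f(z) = q_1(z)q_2(z) + r(z)l(z)$.

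Finally I would substitute $z=\phi(x)$ and set $\tilde q_i(x) = q_i(\phi(x))$, $\tilde l(x) = l(\phi(x))$, $\tilde r(x) = r(\phi(x))$, which immediately yields $\tilde f(x) = \tilde q_1(x)\tilde q_2(x) + \tilde r(x)\tilde l(x)$, i.e.~\eqref{eq: cubic hyperwron representation}. The degree bookkeeping is routine: since $\phi$ is homogeneous of degree one, composition with $\phi$ sends a homogeneous form of degree $j$ in $n$ variables to a homogeneous form of degree $j$ in $m$ variables, so $\tilde l\in F_{m,1}$ and $\tilde r\in F_{m,3}$; and composing a sum of squares $\sum_i g_i^2$ with $\phi$ gives $\sum_i (g_i\circ\phi)^2$, again a sum of squares, so $\tilde q_1,\tilde q_2\in\Sigma_{m,2}$.

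I expect no genuine obstacle here, as the substantive content has been isolated in Proposition~\ref{proposition: Hyperwron representation of cubic hyperbolic polynomial}. The only point requiring a little care is the case split: one must be sure that the sole non-sum-of-squares source of degree-four hyperwrons is a cubic hyperbolic polynomial composed with a \emph{linear} map (the $(d,k)=(3,1)$ entry of $\Omega_2^W$), since it is precisely linearity of $\phi$ that keeps $\tilde l$ linear, keeps $\tilde r$ cubic, and preserves the sum-of-squares property when passing from $q_i$ to $\tilde q_i$.
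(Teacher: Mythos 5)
Your proof is correct and follows essentially the same route as the paper: identify the two degree types $(d,k)=(2,2)$ and $(3,1)$ from $\Omega_2^W$, dispose of the first via the fact that degree-two hyperwrons are sums of squares, and push Proposition~\ref{proposition: Hyperwron representation of cubic hyperbolic polynomial} forward through the linear map $\phi$ for the second. Your write-up is slightly more explicit about the source of the sum-of-squares claim (Lemma~\ref{lem:sos-hyperwon-hyperzout}), but the argument is the same.
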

\begin{proof}
    Any hyperwron $\tilde{f}$ of degree four is either a sum of squares or of the form $f \circ \phi$ where $f(\hat{x})= D_up(\hat{x})D_vp(\hat{x}) - p(\hat{x})D_{uv}^2p(\hat{x})$ for some $p\in \textup{Hyp}_{n,3}(e)$, $u,v\in \Lambda_+(p,e)$, and linear map $\phi:\R^m\rightarrow \R^n$.

    Proposition~\ref{proposition: Hyperwron representation of cubic hyperbolic polynomial} tells us that there exist $q_1,q_2\in \Sigma_{n,2}$ and $l\in F_{n,1}$ and $r\in F_{n,3}$ such that 
    \[ \tilde{f}(x) = q_1(\phi(x))q_2(\phi(x)) + l(\phi(x))r(\phi(x)).\]
    The result follows by setting $\Tilde{q}_1 = q_1\circ \phi$, $\Tilde{q}_2 = q_2\circ \phi$, $\tilde{l} = l\circ \phi$ and $\tilde{r} = r\circ \phi$, together with the observation that an affine change of argument preserves the degree and the property of being a sum of squares.

\end{proof}

\subsection{The example}
To identify an explicit non-negative polynomial that is not a hyperwron, we take advantage of the structure of quartic hyperwrons given in Theorem~\ref{theorem: Hyperwron representation of cubic hyperbolic polynomial, linear map}. In particular, we would like to find a non-negative quartic form that is not a sum of squares and for which no restriction to a codimension one subspace is a product of sums of squares. A challenge in doing so is the need to reason about all possible restrictions to a codimension one subspace. This is greatly simplified if the candidate in mind has a very large symmetry group.

Let $\mathbb{H}$ denote the quaternions, the four-dimensional real normed division algebra spanned by elements $1,i,j,k$, where $1$ is the multiplicative identity and $i^2 = j^2 = k^2 = ijk = -1$. If $x = a+bi+cj+dk\in \mathbb{H}$, then the \emph{real part} is $\textup{Re}(x) = a$, the \emph{conjugate} is $x^* = a-bi-cj-dk$, and the \emph{norm} of $x$ is $|x| = (a^2+b^2+c^2+d^2)^{1/2} = (xx^*)^{1/2}$. If $Z\in \mathbb{H}^{2\times 2}$ is a $2\times 2$ matrix with quaternion entries then its conjugate transpose is
\[ Z^* = \begin{bmatrix}Z_{11}^* & Z_{21}^*\\Z_{12}^* & Z_{22}^*\end{bmatrix}.\]
A quaternionic matrix $Z$ is \emph{Hermitian} if $Z = Z^*$. Note that Hermitian quaternionic matrices have real diagonal entries. If $Z$ is a $2\times 2$ Hermitian quaternionic matrix, then 
the \emph{Moore determinant} is the real number given by
\[ {\det}_M\begin{bmatrix} Z_{11} & Z_{12}\\Z_{12}^* & Z_{22}\end{bmatrix} = Z_{11}Z_{22} - Z_{12}Z_{12}^* = Z_{11}Z_{22} - |Z_{12}|^2.\]
If we write $Z_{12} = a+bi+cj+dk$, then we can think of the Moore determinant as the quadratic form $Z_{11}Z_{22} - (a^2+b^2+c^2+d^2)$ in the six real variables $Z_{11},Z_{22},a,b,c,d$. 

Let $X\in \mathbb{H}^{2\times 2}$ be a $2\times 2$ quaternionic matrix so that $XX^*$ is Hermitian. The example of a quartic form that we focus on in this section is the quartic form in $16$ real variables defined by
\begin{equation}\label{eq:quartic-eg}
\hat{f}(X) = {\det}_M(XX^*).
\end{equation}

The form $\hat{f}$ has an alternative interpretation in terms of the Cauchy-Schwarz inequality over the quaternions. If $x,y\in \mathbb{H}^k$ are vectors with quaternionic entries, then we can define
\[ \|x\|^2 = \sum_{i=1}^{k} x_ix_i^*\in \R\;\;\textup{and}\;\;\langle x,y\rangle_{\mathbb{H}} = \sum_{i=1}^{k}x_iy_i^*\in \mathbb{H}.\]
If $X\in \mathbb{H}^{2\times k}$ is the matrix of the form 
\[ X = \begin{bmatrix} x_1 & x_2 & \cdots & x_k\\y_1 & y_2 & \cdots & y_k\end{bmatrix}\]
then 
\[ {\det}_{M}(XX^*) = {\det}_{M}\begin{bmatrix} \|x\|^2 & \langle x,y\rangle_{\mathbb{H}}\\\langle x,y\rangle_{\mathbb{H}}^* & \|y\|^2\end{bmatrix} = \|x\|^2\|y\|^2 - |\langle x,y\rangle_{\mathbb{H}}|^2.\]
This form was studied in~\cite{Ge2023IsoparametricSquares}, as a special case of a broader class of isoparametric forms. Clearly $\hat{f}$, defined in~\eqref{eq:quartic-eg}, is the special case $k=2$. It also coincides with the form stated in Theorem~\ref{thm:main-sum-hyperwrons}. The following result plays an important role in this section.
\begin{theorem}[{\cite[Proposition 6.1]{Ge2023IsoparametricSquares}}]
    If $k\geq 2$ then the quartic form   $\|x\|^2\|y\|^2 - |\langle x,y\rangle_{\mathbb{H}}|^2$ in $8k$ real variables is nonnegative but not a sum of squares.
\end{theorem}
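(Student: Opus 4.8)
The plan is to treat the two assertions separately: nonnegativity for all $k$ by a Cauchy--Schwarz argument, and failure of the sum-of-squares property by a Gram-matrix/semidefinite-feasibility argument that we may reduce to the case $k=2$.

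\emph{Nonnegativity.} This is just the Cauchy--Schwarz inequality for the quaternionic inner product $\langle x,y\rangle_{\mathbb H}=\sum_i x_iy_i^*$ on $\mathbb H^k$. For any $\mu\in\mathbb H$,
\[ 0\le \|x-\mu y\|^2 = \|x\|^2-\langle x,y\rangle_{\mathbb H}\mu^*-\mu\langle x,y\rangle_{\mathbb H}^*+|\mu|^2\|y\|^2, \]
and taking $\mu=\langle x,y\rangle_{\mathbb H}/\|y\|^2$ when $y\ne 0$ collapses the right-hand side to $\|x\|^2-|\langle x,y\rangle_{\mathbb H}|^2/\|y\|^2$; the case $y=0$ is trivial. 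Equivalently, $XX^*$ is Hermitian positive semidefinite and the Moore determinant of a $2\times2$ Hermitian positive semidefinite quaternionic matrix is its trailing $2\times2$ minor. For the sum-of-squares claim it suffices to handle $k=2$: setting $x_k=y_k=0$ exhibits the $\mathbb H^{k-1}$ form as a restriction of the $\mathbb H^{k}$ form, and a restriction of a sum of squares is a sum of squares, so the non-SOS property propagates upward in $k$.

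\emph{Reduction and setup for $k=2$.} Write $X=(x_1,x_2)\in\mathbb H^2\cong\mathbb R^8$, $Y=(y_1,y_2)\in\mathbb H^2\cong\mathbb R^8$. The form $\hat f(X,Y)=\|X\|^2\|Y\|^2-|\langle X,Y\rangle_{\mathbb H}|^2$ is bihomogeneous of bidegree $(2,2)$, so in any representation $\hat f=\sum_a g_a^2$ the bidegree-$(4,0)$ and $(0,4)$ parts force each $g_a$ to be bilinear in $(X;Y)$. Decomposing $\langle X,Y\rangle_{\mathbb H}=(X^\intercal Y)\,1+(X^\intercal J_1 Y)\,i+(X^\intercal J_2 Y)\,j+(X^\intercal J_3 Y)\,k$, where $J_1,J_2,J_3$ are the three pairwise-anticommuting skew-symmetric complex structures on $\mathbb R^8$ induced by right multiplication by $i,j,k$, and using the Lagrange identity $\|X\|^2\|Y\|^2-(X^\intercal Y)^2=\sum_{p<q}(X_pY_q-X_qY_p)^2$, we get
\[ \hat f \;=\; \sum_{p<q}(X_pY_q-X_qY_p)^2 \;-\; \sum_{\alpha=1}^{3}(X^\intercal J_\alpha Y)^2 . \]
Hence ``$\hat f$ is SOS'' is equivalent to the assertion that the Gram matrix $\hat C_0 = I_{64}-\mathrm{vec}(I)\mathrm{vec}(I)^\intercal-\sum_{\alpha=1}^{3}\mathrm{vec}(J_\alpha)\mathrm{vec}(J_\alpha)^\intercal$ in $\mathrm{Sym}(\mathbb R^{8}\otimes\mathbb R^{8})$ admits a positive semidefinite representative modulo the space $\mathcal K$ of Gram matrices of the zero biquadratic form. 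I would prove infeasibility of this semidefinite program by exhibiting a dual certificate, namely a positive semidefinite $D\in\mathcal K^\perp$ with $\langle \hat C_0,D\rangle<0$. Since $\hat f$ is invariant under the compact group generated by left multiplication of $(x_1,x_2,y_1,y_2)$ by a common unit quaternion and right multiplication of $X\in\mathbb H^{2\times2}$ by $\mathrm{Sp}(2)$, one may average and search for $D$ among invariant matrices only; the isotypic decomposition under this group cuts the search down to a short, explicitly diagonalizable list, and the certificate should be the invariant positive semidefinite form built from the Hessian of $\hat f$ transverse to its zero variety --- the quaternionic rank-$\le 1$ matrices, a $12$-dimensional subvariety of $\mathbb R^{16}$ on which $\hat f$ vanishes to second order and on which every $g_a$ must therefore vanish.

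\emph{Main obstacle.} The hard part is precisely this last step: carrying out the symmetry reduction, exhibiting the invariant matrix $D$, and verifying that it is positive semidefinite, lies in $\mathcal K^\perp$, and pairs negatively with $\hat C_0$. The reason the program is infeasible is the non-commutativity of $\mathbb H$: concretely, the identity $\hat f=|x_1y_2^*-x_2y_1^*|^2-2\,\mathrm{Re}\big(x_1(y_1^*y_2-y_2^*y_1)x_2^*\big)$ shows $\hat f$ fails to be the perfect square $|x_1y_2^*-x_2y_1^*|^2$ by a term that is itself neither a sum of squares nor sign-definite, and isolating exactly this obstruction is the crux. An alternative, possibly cleaner, route is to recognize $\hat f$, viewed as a biquadratic form in $(X;Y)$, as the form attached to a positive but non-decomposable linear map between the relevant matrix algebras and to invoke the standard correspondence ``biquadratic form SOS $\iff$ associated map decomposable''; the work there is in identifying the map and certifying its non-decomposability, which is again a semidefinite-feasibility question amenable to the same symmetry reduction.
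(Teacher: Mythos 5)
The easy halves of your argument are fine: the quaternionic Cauchy--Schwarz computation establishing nonnegativity is correct, the bidegree argument forcing every square in a putative SOS decomposition to be bilinear in $(X;Y)$ is correct, and the observation that the non-SOS property for $k=2$ propagates upward (since restriction preserves sums of squares) is correct and lets you focus on $k=2$. Your identity
\[
\hat f \;=\; \sum_{p<q}(X_pY_q-X_qY_p)^2 \;-\; \sum_{\alpha=1}^{3}(X^\intercal J_\alpha Y)^2
\]
is also correct as a setup.

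However, the core assertion --- that the $k=2$ form is not a sum of squares --- is never actually proved. You describe two programmes (an SDP dual certificate in the invariant algebra, and a positive-map non-decomposability criterion) and you yourself flag them as unexecuted: ``carrying out the symmetry reduction, exhibiting the invariant matrix $D$, and verifying [its properties]\ldots lies'' ahead, and ``isolating exactly this obstruction is the crux.'' As written, this is an outline of possible strategies, not a proof; nothing in the write-up rules out that the difference form $\sum_{p<q}(X_pY_q-X_qY_p)^2 - \sum_\alpha(X^\intercal J_\alpha Y)^2$ admits some other Gram representative that is PSD. The explicit identity $\hat f=|x_1y_2^*-x_2y_1^*|^2+2\,\mathrm{Re}\big(x_1(y_2^*y_1-y_1^*y_2)x_2^*\big)$ shows $\hat f$ is not \emph{this particular} square plus a nonnegative remainder, but that observation alone does not preclude other decompositions.

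For comparison, the paper does not prove this statement itself --- it is cited from Ge and Tang --- but the paper does sketch their method, which is more direct than the one you propose: since every bilinear $g_a$ in an SOS decomposition must vanish on the real zero set of $\hat f$ (the rank-one quaternionic $2\times2$ matrices, a $12$-dimensional real variety in $\mathbb R^{16}$), it suffices to show there is no nonzero quadratic form vanishing on that variety. That is a concrete finite-dimensional linear-algebra computation, not an SDP feasibility argument. You actually identify the relevant variety and note that each $g_a$ must vanish on it, but then you pivot to the dual-certificate route instead of finishing by showing the space of such quadratics is trivial. Closing the gap along the Ge--Tang line would be shorter and more rigorous than the invariant-SDP approach you outline.
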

In particular, the form $\hat{f}$ defined in~\eqref{eq:quartic-eg} is nonnegative but not a sum of squares. To establish this result, Ge and Tang show that there are no nontrivial quadratic forms in $8k$ variables that vanish whenever $\hat{f}$ vanishes.

The quartic form~\eqref{eq:quartic-eg} is a particularly interesting  example of a non-negative form that is not a sum of squares because it has a very large symmetry group.
Let $\textup{Sp}(n)$ denote the group of $n\times n$ quaternionic matrices $U$ that satisfy $UU^* = U^*U = I$.
\begin{lemma}
\label{lemma: 16-variable f unitary matrix equality property}
Let $\hat{f}$ denote the quartic form defined in~\eqref{eq:quartic-eg}. If $P,Q\in \textup{Sp}(2)$ are $2\times 2$ quaternionic unitary matrices then $\hat{f}(PXQ) = \hat{f}(X)$ for all $X\in \mathbb{H}^{2\times 2}$.   
\end{lemma}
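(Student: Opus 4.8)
The plan is to reduce everything to a single invariance statement for the Moore determinant under conjugation by a quaternionic unitary. First, since $(PXQ)(PXQ)^* = PX(QQ^*)X^*P^* = P(XX^*)P^*$ using $QQ^* = I$, we get immediately that $\hat f(PXQ) = {\det}_M\big(P(XX^*)P^*\big)$, so the matrix $Q$ plays no further role. Writing $G = XX^*$, which is Hermitian as already noted, it then remains to prove that ${\det}_M(PGP^*) = {\det}_M(G)$ for every $2\times 2$ Hermitian quaternionic matrix $G$ and every $P \in \textup{Sp}(2)$.

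For that, I would first record the polarization identity ${\det}_M(H) = \tfrac12\big((\Tr H)^2 - \Tr(H^2)\big)$, valid for any $2\times 2$ Hermitian quaternionic $H$. This is a one-line computation from the definition ${\det}_M(H) = H_{11}H_{22} - |H_{12}|^2$: the diagonal entries $H_{11},H_{22}$ are real, $\Tr(H^2) = H_{11}^2 + 2|H_{12}|^2 + H_{22}^2$ (using $H_{21} = H_{12}^*$ and $H_{12}H_{12}^* = H_{12}^*H_{12} = |H_{12}|^2$), so $\tfrac12((\Tr H)^2 - \Tr(H^2)) = H_{11}H_{22} - |H_{12}|^2$. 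Next observe that if $G$ is Hermitian then so is $G^2$, that $PGP^*$ and $PG^2P^*$ are Hermitian, and that $(PGP^*)^2 = PG(P^*P)GP^* = PG^2P^*$ because $P^*P = I$. Hence the polarization identity reduces the claim to showing that $\Tr(PHP^*) = \Tr(H)$ for every Hermitian $H$, applied with $H = G$ and then $H = G^2$.

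The one genuinely delicate point is that the quaternionic trace is not cyclic: $\Tr(AB) \ne \Tr(BA)$ in general. The way around this is that the real part of the trace \emph{is} cyclic, i.e.\ $\operatorname{Re}\Tr(AB) = \operatorname{Re}\Tr(BA)$ for all quaternionic matrices $A,B$, since $\operatorname{Re}(pq) = \operatorname{Re}(qp)$ for all $p,q \in \mathbb{H}$. Because $PHP^*$ is Hermitian its trace is real, so $\Tr(PHP^*) = \operatorname{Re}\Tr\big((PH)P^*\big) = \operatorname{Re}\Tr\big(P^*(PH)\big) = \operatorname{Re}\Tr\big((P^*P)H\big) = \operatorname{Re}\Tr(H) = \Tr(H)$, using $P^*P = I$ and that $\Tr(H)$ is real. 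Feeding this (with $H = G$ and $H = G^2$) into the polarization identity gives ${\det}_M(PGP^*) = {\det}_M(G)$, which combined with the first paragraph yields $\hat f(PXQ) = \hat f(X)$.

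The main obstacle is exactly this failure of cyclicity of the quaternionic trace; everything hinges on replacing it by cyclicity of $\operatorname{Re}\Tr$, which is only legitimate because all the matrices that appear ($PGP^*$, $G$, $G^2$, $PG^2P^*$) are Hermitian and hence have real trace. The remaining ingredients — the polarization identity and the Hermiticity/associativity bookkeeping — are routine, but they are what keeps the argument inside the $2\times 2$ Moore-determinant framework, which is the only setting in which ${\det}_M$ has been defined here.
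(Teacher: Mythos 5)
Your proof is correct, and it takes a genuinely different route from the paper's. The paper handles the $Q$ direction the same way you do (via $QQ^*=I$), but it dispatches the $P$ direction in one stroke by invoking a multiplicativity property of the Moore determinant from Alesker (Theorem 1.1.9(ii)), namely ${\det}_M(P\,G\,P^*) = {\det}_M(PP^*)\,{\det}_M(G)$ for Hermitian $G$, and then using ${\det}_M(PP^*) = {\det}_M(I) = 1$. You instead derive the needed invariance from scratch via the $2\times 2$ polarization identity ${\det}_M(H) = \tfrac12\bigl((\Tr H)^2 - \Tr(H^2)\bigr)$ together with cyclicity of $\operatorname{Re}\Tr$ over quaternions (which, as you note, is the correct substitute for the failed cyclicity of $\Tr$). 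Both arguments are sound; the paper's is shorter and relies on a general-purpose fact about the Moore determinant, while yours is fully self-contained and makes visible exactly which non-commutative issues arise and why they are harmless here. The only real tradeoff is that your polarization identity is special to the $2\times 2$ case, whereas the cited identity extends to any size, but since $\hat f$ is defined on $\mathbb{H}^{2\times 2}$ this costs nothing for the statement at hand.
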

\begin{proof}
We use the representation~\eqref{eq:quartic-eg} of $\hat{f}$ in terms of the Moore determinant of Hermitian quaternionic matrices. Then
\begin{align}
      \hat{f}\left(PXQ\right)&=  {\det}_M\left(PXQ  \left(PXQ \right)^*\right)\nonumber\\
&= {\det}_M\left(PXQ Q^* X^* P^*\right)\nonumber\\
&={\det}_M\left(PX X^* P^*\right)\label{eq:Qunitary}\\
&={\det}_M(PP^*)\,{\det}_M(XX^*)\label{eq:Mdet-id}\\
& = \hat{f}(X)\label{eq:Punitary}
\end{align}
where~\eqref{eq:Qunitary} holds since $QQ^*=I$,~\eqref{eq:Mdet-id} holds due to a property of the Moore determinant~\cite[Theorem 1.1.9 (ii)]{Alesker2003Non-commutativeVariables}, and~\eqref{eq:Punitary} holds because ${\det}_M(PP^*) = {\det}_M(I) = 1$. 

\end{proof}

\subsection{The example is not a hyperwron}

In this section we show that the quartic form $\hat{f}$ defined in~\eqref{eq:quartic-eg} is not a hyperwron. First, we show that if it were a hyperwron then, by exploiting symmetry, it must have a representation in the form of~\eqref{eq: cubic hyperwron representation} where the linear form only depends on two variables.

\begin{lemma}
\label{lemma: diagonalized 16-variable quartic hyperbolic form}
Let $\hat{f}$ denote the quartic form defined in~\eqref{eq:quartic-eg}. If $\hat{f}$ is a hyperwron, then there exist sums of squares $\hat{q}_1,\hat{q}_2\in \Sigma_{16,2}$, a cubic form $\hat{r}\in F_{16,3}$ and real numbers $\sigma_1,\sigma_2\in \R$ such that 
\[ \hat{f}(X) = \hat{q}_1(X)\hat{q}_2(X) + \hat{r}(X)(\sigma_1 \textup{Re}(X_{11}) + \sigma_2 \textup{Re}(X_{22})).\]
 
\end{lemma}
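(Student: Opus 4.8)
The plan is to combine the structural description of quartic hyperwrons in Theorem~\ref{theorem: Hyperwron representation of cubic hyperbolic polynomial, linear map} with the symmetry of $\hat f$ recorded in Lemma~\ref{lemma: 16-variable f unitary matrix equality property}, in order to put the linear form appearing in that description into a diagonal normal form. First I would recall that $\hat f$ is nonnegative but not a sum of squares (the result of Ge and Tang, \cite[Proposition 6.1]{Ge2023IsoparametricSquares}, stated above). Hence, if $\hat f$ is a hyperwron, the non-sum-of-squares alternative of Theorem~\ref{theorem: Hyperwron representation of cubic hyperbolic polynomial, linear map} applies and produces sums of squares $\tilde q_1,\tilde q_2\in\Sigma_{16,2}$, a linear form $\tilde l\in F_{16,1}$, and a cubic form $\tilde r\in F_{16,3}$ with $\hat f(X)=\tilde q_1(X)\tilde q_2(X)+\tilde r(X)\tilde l(X)$. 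Note that $\tilde l$ cannot be identically zero: a product of two sums of squares is again a sum of squares, which would contradict $\hat f\notin\Sigma_{16,4}$.

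Next I would identify the space $F_{16,1}$ of linear forms on $\mathbb{H}^{2\times2}\cong\R^{16}$ with $\mathbb{H}^{2\times2}$ itself via the real inner product $\langle A,B\rangle=\textup{Re}\,\Tr(AB^*)$, writing $\tilde l(X)=\textup{Re}\,\Tr(wX^*)$ for a unique nonzero $w\in\mathbb{H}^{2\times2}$. Applying the quaternionic singular value decomposition to $w$ (a classical fact, and elementary in the $2\times2$ case), there exist $P,Q\in\textup{Sp}(2)$ and real numbers $\sigma_1,\sigma_2$ with $w=P\,\textup{diag}(\sigma_1,\sigma_2)\,Q$. I would then substitute $X\mapsto PXQ$ in the identity above; by Lemma~\ref{lemma: 16-variable f unitary matrix equality property} the left-hand side is unchanged, giving
\[ \hat f(X)=\tilde q_1(PXQ)\,\tilde q_2(PXQ)+\tilde r(PXQ)\,\tilde l(PXQ). \]
Setting $\hat q_i(X):=\tilde q_i(PXQ)$ and $\hat r(X):=\tilde r(PXQ)$, the map $X\mapsto PXQ$ is an invertible real-linear change of coordinates, so it preserves degrees and the sum-of-squares property; hence $\hat q_1,\hat q_2\in\Sigma_{16,2}$ and $\hat r\in F_{16,3}$. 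Finally, cyclicity of $\textup{Re}\,\Tr$ over $\mathbb{H}$, together with $\textup{Re}(q^*)=\textup{Re}(q)$ and $\sigma_i\in\R$, gives
\[ \tilde l(PXQ)=\textup{Re}\,\Tr\bigl(P^*wQ^*X^*\bigr)=\textup{Re}\,\Tr\bigl(\textup{diag}(\sigma_1,\sigma_2)\,X^*\bigr)=\sigma_1\textup{Re}(X_{11})+\sigma_2\textup{Re}(X_{22}), \]
which is exactly the claimed representation.

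The substantive input is the quaternionic singular value decomposition used to diagonalise $w$; the remaining ingredients — the hyperwron structure theorem, invariance of $\hat f$ under $X\mapsto PXQ$, and the elementary algebra of $\textup{Re}\,\Tr$ — are already available. Consequently I expect the only real care needed to be in the bookkeeping with the non-commutative conjugate transpose when tracking how $\tilde l$ transforms under the substitution (in particular keeping straight that $(PXQ)^*=Q^*X^*P^*$ and that the inner product is the real part of a trace), rather than in any conceptual obstacle.
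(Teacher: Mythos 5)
Your proposal is correct and follows essentially the same route as the paper's proof: invoke Theorem~\ref{theorem: Hyperwron representation of cubic hyperbolic polynomial, linear map}, represent the linear form via a quaternionic matrix, diagonalise it with the quaternionic SVD, and then absorb the unitary change of variables using the $\textup{Sp}(2)\times\textup{Sp}(2)$-invariance from Lemma~\ref{lemma: 16-variable f unitary matrix equality property}. The only cosmetic differences are your choice of inner-product convention and SVD parametrisation (the paper writes $A=U\,\textup{diag}(\sigma_1,\sigma_2)\,V^*$ with $\tilde l(X)=\textup{Re}\,\textup{tr}(A^*X)$), and your remark that $\tilde l\not\equiv 0$, which is true but not needed since the conclusion permits $\sigma_1=\sigma_2=0$.
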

\begin{proof}
Suppose that  $\hat{f}$ is a hyperwron. Since $\hat{f}$ it is not a sum of squares~\cite[Proposition 6.1]{Ge2023IsoparametricSquares}, by Theorem~\ref{theorem: Hyperwron representation of cubic hyperbolic polynomial, linear map} there exist sums of squares $\tilde{q}_1,\tilde{q}_2\in \Sigma_{16,2}$, a cubic form $\tilde{r}\in F_{16,3}$ and a linear form $\tilde{l}\in F_{16,1}$ such that $\hat{f} = \tilde{q}_1\tilde{q}_2 + \tilde{r}\tilde{l}$. 
Since $\tilde{l}$ is a linear functional, it can be expressed in the form
\[ \tilde{l}(X) = \textup{Re}\,\textup{tr}\left(A^*X\right)\]
for some matrix $A\in \mathbb{H}^{2\times 2}$. Let $A = U\textup{diag}(\sigma_1,\sigma_2) V^*$ denote the quaternionic singular value decomposition of $A$~\cite[Proposition 5.3.6 (c)]{Rodman2014TopicsAlgebra}, where $U,V\in \textup{Sp}(2)$ and $\sigma_1,\sigma_2\in \R$. By Lemma~\ref{lemma: 16-variable f unitary matrix equality property}, $\hat{f}$ is invariant under the action of $\textup{Sp}(2)$ by left- and right-multiplication. Therefore
\begin{align*}
    \hat{f}(X) &= \hat{f}(UXV^*)\\
    & = \tilde{q}_1(UXV^*)\tilde{q}_2(UXV^*) + \tilde{r}(UXV^*) \textup{Re}\,\textup{tr}(A^*U^*XV^*)\\
    & = \tilde{q}_1(UXV^*)\tilde{q}_2(UXV^*) + \tilde{r}(UXV^*) \textup{Re}\,\textup{tr}(\textup{diag}(\sigma_1,\sigma_2)X)\\
    & = \tilde{q}_1(UXV^*)\tilde{q}_2(UXV^*) + \tilde{r}(UXV^*) (\sigma_1 \textup{Re}(X_{11}) + \sigma_2\textup{Re}(X_{22})),
    \end{align*}
    where the second-last equality holds by using the fact that $A^* = V\textup{diag}(\sigma_1,\sigma_2)U^*$, the fact that $U,V\in \textup{Sp}(2)$ and the cyclic property of the trace for quaternionic matrices.

    Taking $\hat{q}_1(X) = \tilde{q}_1(UXV^*)$, $\hat{q}_2(X) = \tilde{q}_2(UXV^*)$, and $\hat{r}(X) = \tilde{r}(UXV^*)$ completes the proof.

\end{proof}

To show that $\hat{f}$ is not a hyperwron, we show that restricting $\hat{f}$ to an affine subspace where $\textup{Re}(X_{11}) = \textup{Re}(X_{22}) = 0$ results in a polynomial that cannot be a product of sums of squares.

\begin{theorem}
    The quartic form $\hat{f}$ defined in~\eqref{eq:quartic-eg} is not a hyperwron. 
    \label{theorem: example polynomial is not a hyperwron}
\end{theorem}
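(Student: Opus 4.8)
The plan is to argue by contradiction, using the structural constraint from Lemma~\ref{lemma: diagonalized 16-variable quartic hyperbolic form}. Suppose $\hat f$ were a hyperwron. Since $\hat f$ is not a sum of squares~\cite[Proposition 6.1]{Ge2023IsoparametricSquares}, Lemma~\ref{lemma: diagonalized 16-variable quartic hyperbolic form} would provide sums of squares $\hat q_1,\hat q_2\in\Sigma_{16,2}$, a cubic form $\hat r\in F_{16,3}$, and reals $\sigma_1,\sigma_2$ with
\[\hat f(X)=\hat q_1(X)\hat q_2(X)+\hat r(X)\bigl(\sigma_1\textup{Re}(X_{11})+\sigma_2\textup{Re}(X_{22})\bigr).\]
I would then restrict to the $14$-dimensional real linear subspace $V=\{X\in\mathbb H^{2\times2}:\textup{Re}(X_{11})=\textup{Re}(X_{22})=0\}$. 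On $V$ the last term vanishes identically, so $\hat f|_V=(\hat q_1|_V)(\hat q_2|_V)$ is a product of two quadratic forms on $V$, each a sum of squares (the restriction of a sum of squares to a linear subspace is again a sum of squares). Neither factor is identically zero on $V$, because $\hat f$ is not: for instance $\hat f(X)=1$ when $X_{11}=X_{22}=i$ and $X_{12}=X_{21}=0$. It therefore remains to show that $\hat f|_V$ cannot be written as a product of two nonzero nonnegative quadratic forms.

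I would establish this via zero sets. The real zero set of a nonzero nonnegative quadratic form is a proper linear subspace, and the zero set of a product is the union of the zero sets; hence $\hat f|_V=(\hat q_1|_V)(\hat q_2|_V)$ would force the real zero set $\mathcal Z(\hat f|_V)$ to equal a union $W_1\cup W_2$ of two \emph{proper} linear subspaces of $V$. Writing $x,y\in\mathbb H^2$ for the rows of $X$, the Cauchy--Schwarz description $\hat f(X)=\|x\|^2\|y\|^2-|\langle x,y\rangle_{\mathbb H}|^2$ shows $\hat f$ vanishes exactly where the rows of $X$ are $\mathbb H$-proportional. Consider the polynomial map $\Psi\colon(x_1,x_2,\lambda)\mapsto\left(\begin{smallmatrix}x_1&x_2\\x_1\lambda&x_2\lambda\end{smallmatrix}\right)$ restricted to the domain $D=\{(x_1,x_2,\lambda)\in\mathbb H^3:\textup{Re}(x_1)=0,\ \textup{Re}(x_2\lambda)=0\}$. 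Every point of $\Psi(D)$ lies in $V$ (since $\textup{Re}(X_{11})=\textup{Re}(x_1)=0$ and $\textup{Re}(X_{22})=\textup{Re}(x_2\lambda)=0$ there) and has $\mathbb H$-proportional rows, so $\Psi(D)\subseteq\mathcal Z(\hat f|_V)$, and hence the Zariski closure $Y:=\overline{\Psi(D)}$ satisfies $Y\subseteq\mathcal Z(\hat f|_V)=W_1\cup W_2$.

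Two facts then close the argument. First, $Y$ is irreducible: $\textup{Re}(x_2\lambda)$ is a nondegenerate quadratic form in the eight real coordinates of $(x_2,\lambda)$, hence irreducible, so $D$ (a product of $\mathbb R^3$ with the quadric $\{\textup{Re}(x_2\lambda)=0\}$) is irreducible, and $Y$ is the closure of the image of $D$ under a morphism. Since the $W_i$ are closed, irreducibility gives $Y\subseteq W_1$ or $Y\subseteq W_2$. Second, the linear span of $Y$ is all of $V$: specialising $x_2=0$ and varying $(x_1,\lambda)$ (using that left multiplication by a fixed nonzero quaternion is a bijection of $\mathbb H$) shows $\textup{span}(\Psi(D))$ contains the subspaces $\{X:X_{12}=X_{21}=X_{22}=0,\ \textup{Re}(X_{11})=0\}$ and $\{X:X_{11}=X_{12}=X_{22}=0\}$, while specialising $x_1=0$ gives $\{X:X_{11}=X_{21}=X_{22}=0\}$ and $\{X:X_{11}=X_{12}=X_{21}=0,\ \textup{Re}(X_{22})=0\}$; these four subspaces together span $V$. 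Hence $V=\textup{span}(Y)\subseteq W_i$, contradicting that $W_i$ is proper. This contradiction shows $\hat f$ is not a hyperwron. The main obstacle is precisely this last span computation, i.e.\ showing that $\mathcal Z(\hat f|_V)$ is genuinely not a union of two linear subspaces; this is where the quaternionic structure (invertibility of left multiplication, non-degeneracy of $\textup{Re}(x_2\lambda)$) is essential, and it is what rules out the degenerate possibility that the curved part of $\mathcal Z(\hat f|_V)$ still fits inside $W_1\cup W_2$.
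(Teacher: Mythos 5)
Your proposal diverges from the paper after invoking Lemma~\ref{lemma: diagonalized 16-variable quartic hyperbolic form}: the paper restricts $\hat f$ to the two-variable affine slice $\left(\begin{smallmatrix}x_1 i & i\\ i & w_1 i\end{smallmatrix}\right)$, computes it explicitly as $(x_1 w_1 - 1)^2$, and shows via a rank count on a $3\times 3$ symmetric matrix that $1 - x_1 w_1$ cannot factor into two real linear forms; you instead work on the full $14$-dimensional linear subspace $V$ and argue geometrically that $\mathcal Z(\hat f|_V)$ cannot be a union of two proper linear subspaces. The geometric route is attractive and would give a valid alternative proof, but there is a substantive error in your parametrization $\Psi$.

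The map $\Psi(x_1,x_2,\lambda)=\left(\begin{smallmatrix}x_1&x_2\\x_1\lambda&x_2\lambda\end{smallmatrix}\right)$ sets the second row equal to the first with each entry \emph{right}-multiplied by $\lambda$. Because $\mathbb H$ is non-commutative, right-proportional rows do not make $X$ rank one, and $\hat f$ need not vanish on $\Psi(D)$. For instance $(x_1,x_2,\lambda)=(i,j,i)$ lies in $D$ (since $\textup{Re}(i)=0$ and $\textup{Re}(ji)=\textup{Re}(-k)=0$), yet $\Psi(i,j,i)=\left(\begin{smallmatrix}i&j\\-1&-k\end{smallmatrix}\right)$ gives $\hat f = 2\cdot 2 - |i(-1)^* + j(-k)^*|^2 = 4 - |{-i}+jk|^2 = 4 - 0 = 4\neq 0$, so this point is not in $\mathcal Z(\hat f|_V)$. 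The zero set of $\hat f$ is characterised by the rows being \emph{left} $\mathbb H$-proportional, as the rank-one factorisation $\left(\begin{smallmatrix}x\\y\end{smallmatrix}\right)(z\;\; w)$ shows: the second row is $yx^{-1}$ times the first, acting on the left. The repair is to take $\Psi(x_1,x_2,\lambda)=\left(\begin{smallmatrix}x_1&x_2\\\lambda x_1&\lambda x_2\end{smallmatrix}\right)$; the domain $D$ is unchanged because $\textup{Re}(\lambda x_2)=\textup{Re}(x_2\lambda)$, the irreducibility of $D$ (a linear space times a rank-$8$ quadric) still holds, and your span computation (specialising $x_2=0$ and $x_1=0$, using that $\lambda\mapsto\lambda x_1$ and $\lambda\mapsto\lambda x_2$ are bijections) goes through unchanged. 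With this correction the proposal would be a valid and genuinely different proof from the one in the paper.
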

\begin{proof}
    We argue by contradiction. Suppose that $\hat{f}$ is a hyperwron. Then, by Lemma~\ref{lemma: diagonalized 16-variable quartic hyperbolic form}, there exist sums of squares $\hat{q}_1,\hat{q}_2\in \Sigma_{16,2}$, a cubic form $\hat{r}\in F_{16,3}$ and real numbers $\sigma_1,\sigma_2\in \R$ such that 
\begin{equation}
\label{eq:hatfrep}
\hat{f}(X) = \hat{q}_1(X)\hat{q}_2(X) + \hat{r}(X)(\sigma_1 \textup{Re}(X_{11}) + \sigma_2 \textup{Re}(X_{22})).
\end{equation}
Let $h$ denote the polynomial in two variables defined by $h(x_1,w_1) = \hat{f}\left(\begin{smallmatrix} x_1 i & i\\i & w_1 i\end{smallmatrix}\right)$. Since we have restricted $\hat{f}$ to an affine space where the diagonal elements have zero real part,  equation~\eqref{eq:hatfrep} tells us that there exist quadratic sums of squares $\rho_1,\rho_2$ such that 
\[ h(x_1,w_1) = \rho_1(x_1,w_1)\rho_2(x_1,w_1).\]
On the other hand we can explicitly see that 
\begin{align*} h(x_1,w_1) &= {\det}_M\left(\begin{pmatrix}
        x_1 i  &i\\ i &w_1i
    \end{pmatrix}\begin{pmatrix}
        x_1 i  &i\\ i &w_1i
    \end{pmatrix}^* \right)\\
    &={\det}_M\begin{pmatrix}
          x_1^2+1 & x_1+w_1\\x_1+w_1 & w_1^2 +1\end{pmatrix}\\
    &=(x_1w_1-1)^2.\end{align*}
Since $h$ is a square, it follows that $\rho_1\rho_2$ is a square, and hence that $\rho_1$ and $\rho_2$ are each squares. 
Therefore $1-x_1w_1 = (a+bx_1+cw_1)(d+ex_1+fw_1)$ for some $a,b,c,d,e,f\in \R$. This implies the following identity on symmetric matrices:
\[ \begin{bmatrix} 1 & 0 & 0\\0 & 0 & -1/2\\0 & -1/2 & 0\end{bmatrix} = \frac{1}{2}\begin{bmatrix} a\\b\\c\end{bmatrix}\begin{matrix}\begin{bmatrix} d & e & f\end{bmatrix}\\\phantom{a}\\\phantom{a}\end{matrix} + \frac{1}{2}\begin{bmatrix} d\\e\\f\end{bmatrix}\begin{matrix}\begin{bmatrix} a & b & c\end{bmatrix}\\\phantom{a}\\\phantom{a}\end{matrix}.\]
This is a contradiction because the left hand size has rank three and the right hand side has rank two. 

\end{proof}

\subsection{The example is extreme}
In this section we show that the quartic form $\hat{f}$, defined in~\eqref{eq:quartic-eg} generates an extreme ray of the cone of non-negative polynomials of degree four in sixteen variables. We do this by applying the following well-known sufficient condition for a form $q\in P_{n,2d}$ to generate an exposed extreme ray (see, e.g.,~\cite[Chapter 4]{2012SemidefiniteGeometry}).
 \begin{proposition}
 \label{prop:suff-extreme}
     Let $q\in P_{n,2d}\setminus\{0\}$ be a non-negative homogeneous polynomial. Let $\mathcal{V}(q) =\{x\in \R^n\;:\; q(x) = 0\}$ and let 
     \begin{equation}\label{eq:Ldef} \mathcal{L}_{q} = \{p\in F_{n,2d}\;:\;\nabla p(x) = 0\;\;\textup{for all $x\in \mathcal{V}(q)$}\}.\end{equation}
     If $\mathcal{L}_q \subseteq \textup{span}(q)$, then $q$ generates an extreme ray of $P_{n,2d}$.
\end{proposition}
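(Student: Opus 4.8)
The plan is to verify the defining property of an extreme ray by hand: assuming a decomposition $q = q_1 + q_2$ with $q_1, q_2 \in P_{n,2d}$, I will show that $q_1$ and $q_2$ must both be non-negative scalar multiples of $q$, which is exactly what it means for $q$ to generate an extreme ray.

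First I would show the containment of zero sets $\mathcal{V}(q) \subseteq \mathcal{V}(q_1) \cap \mathcal{V}(q_2)$. This is immediate: if $x \in \mathcal{V}(q)$ then $q_1(x) + q_2(x) = q(x) = 0$, and since $q_1(x), q_2(x) \geq 0$ (both lie in $P_{n,2d}$), we must have $q_1(x) = q_2(x) = 0$.

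The key step is then to upgrade this containment of zero sets to a statement about gradients. Since $q_i$ is non-negative everywhere on $\R^n$ and vanishes at every point $x \in \mathcal{V}(q)$, each such $x$ is a global minimiser of the differentiable function $q_i$, so $\nabla q_i(x) = 0$ there. Hence $q_1, q_2 \in \mathcal{L}_q$, and the hypothesis $\mathcal{L}_q \subseteq \textup{span}(q)$ yields $q_1 = \lambda_1 q$ and $q_2 = \lambda_2 q$ for some real scalars $\lambda_1, \lambda_2$.

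Finally I would fix the signs of the $\lambda_i$: since $q \neq 0$ is non-negative, homogeneity gives a point $x_0$ with $q(x_0) > 0$, and then $0 \leq q_i(x_0) = \lambda_i q(x_0)$ forces $\lambda_i \geq 0$. Thus any way of writing $q$ as a sum of two elements of $P_{n,2d}$ places both summands on the ray $\R_{\geq 0}\, q$, which is the assertion that $q$ generates an extreme ray of $P_{n,2d}$. I do not expect a genuine obstacle in this argument; the only points needing a word of justification are that a nonzero element of $P_{n,2d}$ is strictly positive somewhere (immediate from homogeneity and non-vanishing) and that a differentiable function has vanishing gradient at an interior global minimiser. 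The degenerate case $\mathcal{V}(q) = \{0\}$ causes no trouble, since it merely forces $\textup{span}(q) = F_{n,2d}$ and the same computation applies verbatim.
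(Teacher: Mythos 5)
Your argument is correct and follows exactly the same strategy as the paper's proof: decompose $q = q_1 + q_2$, use non-negativity to show $q_1, q_2$ vanish on $\mathcal{V}(q)$ with vanishing gradients there (so they lie in $\mathcal{L}_q$), apply the hypothesis to get $q_i = \lambda_i q$, and fix the signs. The only difference is that you spell out the sign argument for $\lambda_i$ slightly more explicitly, which is a minor stylistic variation.
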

\begin{proof}
Let $q = q_1 + q_2$ where $q_1,q_2\in P_{n,2d}$. We will show that if $\mathcal{L}_q \subseteq \textup{span}(q)$, then $q_1$ and $q_2$ are both non-negative multiples of $q$. 

Since $q = q_1+ q_2$ and $q_1,q_2\in P_{n,2d}$, it follows that $q_1(x) = q_2(x) = 0$ for all $x\in \mathcal{V}(q)$. Moreover, since every $x\in \mathcal{V}(q)$ is a global minimizer of $q$ and $q_1$ and $q_2$, it follows that $\nabla q(x) = \nabla q_1(x) = \nabla q_2(x) = 0$ for all $x\in \mathcal{V}(q)$. In other words, $q,q_1,q_2\in \mathcal{L}_q$. Since we have assumed that $\mathcal{L}_q \subseteq  \textup{span}(q)$, it follows that 
there are $\lambda_1,\lambda_2\in \R$ such that $q_1 = \lambda_1\,q$ and $q_2 = \lambda_2\,q$. Since $q,q_1,q_2$ are non-negative, it follows that $\lambda_1\geq 0$ and $\lambda_2\geq 0$. This shows that $q_1$ and $q_2$ are nonnegative multiples of $q$, and so $q$ generates an extreme ray of $P_{n,2d}$.
\end{proof}

\begin{proposition}
 The quartic form $\hat{f}$ defined in~\eqref{eq:quartic-eg} generates an extreme ray of $P_{16,4}$.
 \label{proposition: example polynomial is an extreme ray}
\end{proposition}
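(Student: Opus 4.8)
The plan is to apply Proposition~\ref{prop:suff-extreme} with $q=\hat f$, $n=16$, $2d=4$, so that it suffices to prove $\mathcal L_{\hat f}\subseteq\textup{span}(\hat f)$, where $\mathcal L_{\hat f}$ is the space of quartic forms whose gradient vanishes on $\mathcal V(\hat f)=\{X\in\mathbb H^{2\times 2}:\hat f(X)=0\}$. The first step is to describe $\mathcal V(\hat f)$ explicitly. Writing $X=\left(\begin{smallmatrix}X_{11}&X_{12}\\X_{21}&X_{22}\end{smallmatrix}\right)$ and using $\hat f(X)=\|x\|^2\|y\|^2-|\langle x,y\rangle_{\mathbb H}|^2$ with $x,y\in\mathbb H^2$ the rows of $X$, the quaternionic Cauchy--Schwarz inequality shows $\hat f(X)=0$ precisely when the rows of $X$ are left-$\mathbb H$-linearly dependent; equivalently $\mathcal V(\hat f)=\{uv^{*}:u,v\in\mathbb H^2\}$ is the variety of quaternionic $2\times 2$ matrices of rank $\leq 1$. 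In particular, viewing the $16$ real coordinates as the four quaternion entries, $\mathcal V(\hat f)$ contains the four $8$-dimensional linear subspaces where a whole row, or a whole column, of $X$ vanishes.

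Next I would take $p\in\mathcal L_{\hat f}$ and exploit double vanishing on these four coordinate subspaces. A polynomial that vanishes together with its gradient on a coordinate subspace lies in the square of the ideal of that subspace; applying this to the two ``zero row'' and the two ``zero column'' subspaces forces every monomial of $p$ to have degree $\geq 2$ in the coordinates of each row and degree $\geq 2$ in the coordinates of each column. Since $\deg p=4$, this pins down the multidegree of every monomial and yields a decomposition $p=p_{2002}+p_{1111}+p_{0220}$, where $p_{2002}$ is bihomogeneous of bidegree $(2,2)$ in $(X_{11},X_{22})$, $p_{0220}$ is bihomogeneous of bidegree $(2,2)$ in $(X_{12},X_{21})$, and $p_{1111}$ is multilinear, with one factor drawn from each quaternion entry. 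Expanding $\hat f=|X_{11}|^2|X_{22}|^2+|X_{12}|^2|X_{21}|^2-2\,\textup{Re}(X_{11}X_{21}^{*}X_{22}X_{12}^{*})$ shows that $\hat f$ itself has exactly this shape, so only the ratios between the three pieces remain to be determined.

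To determine those ratios I would impose the double vanishing of $p$ along the remaining, rank-one, part of $\mathcal V(\hat f)$. Substituting $X=uv^{*}$ (concretely $X=\left(\begin{smallmatrix}A&B\\\lambda A&\lambda B\end{smallmatrix}\right)$) into $p$ and $\nabla p$ produces a family of polynomial identities in $A,B,\lambda$ relating $p_{2002}$, $p_{0220}$ and $p_{1111}$. To keep this tractable I would use that $\mathcal V(\hat f)$, and hence $\mathcal L_{\hat f}$, is invariant under $X\mapsto PXQ$ for $P,Q\in\textup{Sp}(2)$ (Lemma~\ref{lemma: 16-variable f unitary matrix equality property}): $\mathcal L_{\hat f}$ then decomposes into $\textup{Sp}(2)\times\textup{Sp}(2)$-isotypic pieces, which can be examined separately. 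On the trivial piece one uses the quaternionic singular value decomposition to see that the invariant quartics are spanned by $\|X\|^4$ and $\hat f$, together with the observation that $\nabla\|X\|^4=4\|X\|^2X$ vanishes only at the origin whereas $\mathcal V(\hat f)\neq\{0\}$; hence the invariant part of $\mathcal L_{\hat f}$ is exactly $\textup{span}(\hat f)$. The nontrivial isotypic pieces are then ruled out using the derivative identities on the rank-one locus, reduced to a finite linear-algebra computation via the residual $\textup{Sp}(1)$-type symmetries acting (through left and right quaternion multiplication) on the individual entries. Combining these facts gives $p\in\textup{span}(\hat f)$, and Proposition~\ref{prop:suff-extreme} then yields the claim.

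I expect the main obstacle to be this last step: organizing the gradient-vanishing conditions along the rank-one locus, together with the isotypic bookkeeping, so that precisely the scalar multiples of $\hat f$ survive. Everything preceding it --- identifying $\mathcal V(\hat f)$ via Cauchy--Schwarz and extracting the multidegree decomposition of $p$ --- is routine.
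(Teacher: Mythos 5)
Your opening reductions are valid and go beyond what the paper records explicitly: you correctly identify $\mathcal{V}(\hat f)$ as the rank-$\leq 1$ locus (via the quaternionic Cauchy--Schwarz equality case, plus Euler's identity to see that gradient-vanishing forces $p$ itself to vanish there), and the double-vanishing of $p$ on the four ``zero row/column'' coordinate subspaces does pin the multidegree of every monomial to $(2,0,0,2)$, $(1,1,1,1)$, or $(0,2,2,0)$, since summing the four inequalities $d_{11}+d_{12}\geq 2$, $d_{21}+d_{22}\geq 2$, $d_{11}+d_{21}\geq 2$, $d_{12}+d_{22}\geq 2$ against $\sum d_{ij}=4$ forces all four to be equalities. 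Your $\mathrm{Sp}(2)\times\mathrm{Sp}(2)$-invariance observation and the SVD-based identification of the invariant quartics as $\mathrm{span}\{\|X\|^4,\hat f\}$ are also correct, and the exclusion of $\|X\|^4$ via $\nabla\|X\|^4 = 4\|X\|^2 X$ not vanishing on $\mathcal V(\hat f)\setminus\{0\}$ is the right move for the trivial isotype.

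The genuine gap is exactly where you flag it: ruling out the nontrivial isotypic pieces. You assert this ``reduces to a finite linear-algebra computation via the residual $\mathrm{Sp}(1)$-type symmetries'' but you never write down the isotypic decomposition of the three multidegree components under $\mathrm{Sp}(2)\times\mathrm{Sp}(2)$, nor the linear conditions on each isotype coming from $\nabla p\bigl|_{\mathcal V(\hat f)}=0$, nor the argument that each nontrivial isotype is then killed. The spaces involved are not small ($p_{2002}$ and $p_{0220}$ each live in a $100$-dimensional space, $p_{1111}$ in a $256$-dimensional space), and the $\mathrm{Sp}(2)\times\mathrm{Sp}(2)$-action on them is not multiplicity-free, so the bookkeeping is nontrivial and it is not self-evident that the residual computation you would be left with is smaller than a direct one. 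The paper sidesteps all of this with a blunter, fully explicit route: it works with the (a priori larger) space $\mathcal L'_{\hat f}$ of quartics with gradient vanishing on the rank-one locus, establishes (by a sparse integer nullspace computation in Mathematica) that the $16$ partial derivatives of $\hat f$ span the space $\mathcal U$ of cubics vanishing on rank-one matrices, rewrites the condition $p\in\mathcal L'_{\hat f}$ as $\nabla p = A\nabla\hat f$ for some $A\in\mathbb R^{16\times 16}$, and computes that the space $\tilde{\mathcal L}_{\hat f}$ of such pairs $(p,A)$ has dimension one. Your structural approach, if carried through, would be more illuminating than the paper's machine check, but as written it stops at an outline precisely at the step that carries all of the content; to make it a proof you must actually exhibit the decomposition into isotypes and show each nontrivial one is annihilated, or fall back to a direct linear-algebra computation as the paper does.
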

\begin{proof}
    By Proposition~\ref{prop:suff-extreme}, it suffices to show that $\mathcal{L}_{\hat{f}} \subseteq \textup{span}(\hat{f})$, where $\mathcal{L}_{\hat{f}}$ is defined in~\eqref{eq:Ldef}. We will first show that  $\hat{f}(X) = 0$ whenever $X\in \mathbb{H}^{2\times 2}$ has rank one.  Note that $X\in \mathbb{H}^{2\times 2}$ is rank one if  \[X=\begin{bmatrix} x\\y\end{bmatrix}\begin{matrix}\begin{bmatrix} z & w\end{bmatrix}\\\phantom{a}\end{matrix}=\begin{bmatrix}x z & xw\\yz & yw\end{bmatrix}\]
for some $x,y,z,w\in \mathbb{H}$. Using this parametric form,  we have
    \begin{align*} \hat{f}\left(\begin{bmatrix}x z & xw\\yz & yw\end{bmatrix}\right) 
    & = {\det}_M\left(\begin{bmatrix} x\\y\end{bmatrix}\begin{matrix}\begin{bmatrix} z & w\end{bmatrix}\\\phantom{a}\end{matrix}\begin{bmatrix} z^*\\w^*\end{bmatrix}\begin{matrix}\begin{bmatrix} x^* & y^*\end{bmatrix}\\\phantom{a}\end{matrix} \right)\\
    & = {\det}_M\left((|z|^2+|w|^2)\begin{bmatrix} x\\y\end{bmatrix}\begin{matrix}\begin{bmatrix} x^* & y^*\end{bmatrix}\\\phantom{a}\end{matrix}\right)\\
    & = (|z|^2+|w|^2)^2(|x|^2|y|^2 - |xy^*|^2)\\
    & = 0.
    \end{align*}
    It follows that
    \[ \mathcal{L}'_{\hat{f}}:= \left\{p\in F_{16,4}\;:\; \nabla p\left(\begin{bmatrix}x z & xw\\yz & yw\end{bmatrix}\right) = 0\;\;\textup{for all $x,y,z,w\in \mathbb{H}$}\right\}\supseteq \mathcal{L}_{\hat{f}}.\]
    To show that $\hat{f}$ generates an extreme ray of $P_{16,4}$, it is enough to show that $\mathcal{L}'_{\hat{f}}\subseteq \textup{span}(\hat{f})$. Since $\hat{f}\in \mathcal{L}'_{\hat{f}}$, it suffices to show that $\dim(\mathcal{L}_{\hat{f}}')\leq \dim(\textup{span}(\hat{f}))=1$. Note that each of the entries of $\nabla p\left(\begin{bmatrix}x z & xw\\yz & yw\end{bmatrix}\right)$ can be thought of as a form of degree $6$ in $16$ real variables with coefficients that are linear in the coefficients of $p\in F_{16,4}$. As such, $\mathcal{L}_{\hat{f}}'$ is the kernel of a linear map $\mathcal{A}:F_{16,4}\rightarrow F_{16,6}^{16}$. More explicitly, we can think of $\mathcal{A}=\mathcal{A}_2\circ \mathcal{A}_1$ as the composition of two linear maps. The linear map $\mathcal{A}_1:F_{16,4}\rightarrow F_{16,3}^{16}$ sends $p$ to $\nabla p$. The linear map $\mathcal{A}_2:F_{16,3}^{16}\rightarrow F_{16,6}^{16}$  sends a tuple $(q_1,\ldots,q_{16})\in F_{16,3}^{16}$ of cubics in a $2\times 2$ matrix of quaternion variables ($16$ real variables) to the corresponding tuple $(r_1,\ldots,r_{16})\in F_{16,6}^{16}$ of sextics in four quaternion variables ($16$ real variables) via the relation
    \[ r_j(x,y,z,w) = q_j\left(\begin{bmatrix}x z & xw\\yz & yw\end{bmatrix}\right)\quad\textup{for $j=1,2,\ldots,16$}.\]

    Let 
    \[\mathcal{U} = \left\{q\in F_{16,3}\;:\; q\left(\begin{bmatrix} xz & xw\\yz & yw\end{bmatrix}\right) = 0\;\textup{for all $x,y,z,w\in \mathbb{H}$}\right\}\]
    denote the subspace of cubic forms in 16 variables that vanish on rank one $2\times 2$ quaternionic matrices. Then
    $\mathcal{U}^{16}\subseteq F_{16,3}^{16}$ is the kernel of the linear map $\mathcal{A}_2$.
    We can rewrite the subspace of interest as
    \[ \mathcal{L}_{\hat{f}}' = \{p\in F_{16,4}\;:\; \nabla p \in \mathcal{U}^{16}\}.\]
    Recall that our aim is
    to show that the dimension of $\mathcal{L}_{\hat{f}}'$ is at most one. 
    
Next, we construct an explicit basis for $\mathcal{U}$. Since $\hat{f}$ is a non-negative quartic form that vanishes on rank one $2\times 2$ quaternionic matrices, it follows that each partial derivative of $\hat{f}$ (i.e., each entry of the gradient of $\hat{f}$) is an element of $\mathcal{U}$. Moreover, the $16$ partial derivatives of $\hat{f}$ are linearly independent (which can be confirmed by noting that the Hessian of $\hat{f}$ evaluated at $x=z=1,y=w=0$ has full rank). To see
that these span $\mathcal{U}$, we form a (sparse, integer-valued) matrix with $\mathcal{U}$ as its nullspace, and use this to explicitly compute that the dimension of $\mathcal{U}$ is $16$. This
confirms that $\mathcal{U}$
has the $16$ partial derivatives of $\hat{f}$ as a basis.

It follows that $p\in \mathcal{L}_{\hat{f}}'$ if and only if
there exists a $16\times 16$ real matrix $A$ such that $\nabla p(X) = A\nabla \hat{f}(X)$ for all $X\in \mathbb{H}^{2\times 2}$.

    Consider the subspace 
    \[ \tilde{\mathcal{L}}_{\hat{f}} = \{(p,A)\in F_{16,4}\times \R^{16\times 16}\;:\; \nabla p(X) = A \nabla \hat{f}(X)\;\;\textup{for all $X\in \mathbb{H}^{2\times 2}\cong \R^{16}$}\}\]
    and note that $\mathcal{L}_{\hat{f}}'$
    is the image of $\tilde{\mathcal{L}}_{\hat{f}}$ under the surjective linear map $(p,A)\mapsto p$. As such, to show that $\dim(\mathcal{L}_{\hat{f}}')\leq 1$, it is enough to show that
    $\tilde{\mathcal{L}}_{\hat{f}}$ is one-dimensional. The subspace $\tilde{\mathcal{L}}_{\hat{f}}$ is, again, the kernel of the linear map $\mathcal{B}:F_{16,4}\times \R^{16 \times 16} \rightarrow F_{16,3}^{16}$ defined by $\mathcal{B}(p,A) = \mathcal{A}_1(p) - A\nabla \hat{f}$. 
 Directly forming the corresponding (sparse integer-valued) matrix that represents $\mathcal{B}$ with respect to the monomial basis and computing the dimension of its nullspace reveals that $\tilde{\mathcal{L}}_{\hat{f}}$ has dimension one and therefore that $\mathcal{L}_{\hat{f}}'$ has dimension at most one. This completes the proof.
    
    Mathematica code that sets up matrices with nullspaces $\mathcal{U}$ and $\tilde{\mathcal{L}}_{\hat{f}}$, and computes the respective dimensions of these nullspaces, can be found at \href{https://github.com/BrianNg001/Non-negative-polynomials-without-hyperbolic-certificates-of-non-negativity/}{https://github.com/BrianNg001/Non-negative-polynomials-without-hyperbolic-certificates-of-non-negativity/}.
\end{proof}
We are now in a position to state and prove the main result of this section.

\begin{proof}[{Proof of Theorem~\ref{thm:main-sum-hyperwrons}}]
        We argue by contradiction. If $\hat{f} = \sum_{i=1}^k f_i$ were a sum of hyperwrons $f_i\in \mathcal{W}_{16,4}$ (for $i=1,2,\ldots,k$), then $\hat{f}$ would be a sum of nonnegative forms (since every hyperwron is nonnegative). Since $\hat{f}$ generates an extreme ray of $P_{16,4}$ by Proposition~\ref{proposition: example polynomial is an extreme ray}, it follows that all of the $f_i$ are non-negative multiples of $\hat{f}$. But then $\hat{f}$ must be a hyperwron, which contradicts Theorem~\ref{theorem: example polynomial is not a hyperwron}. Therefore $\hat{f}$ is not a sum of hyperwrons.
\end{proof}

\section{Discussion}
\label{section: Discussions}

This paper considers the question of whether all non-negative polynomials can be expressed as hyperwrons, hyperzouts, or sums of these. We show that there are non-negative polynomials that are not hyperwrons, and give an explicit example of a quartic form that is not a sum of hyperwrons. Our techniques do not give such strong results in the case of hyperzouts, however. We establish that if we restrict the degree of the hyperbolic polynomial that forms part of the construction of a hyperzout, then there are non-negative polynomials that are not hyperzouts. However, this does not rule out the possibility that every non-negative polynomial is a hyperzout.

It is natural to ask whether the result in Theorem~\ref{theorem: m_y no Wronskian} can be improved, in the sense that there are additional cases of degrees and numbers of variables where there exist non-negative homogeneous polynomials that are not hyperwrons. The cases that are not settled are:
\begin{itemize}
     \item $m=3$ and $y\geq 3$ (ternary forms of degree at least six);
     \item $m=4$ and $y=\{2,3\}$ (quaternary forms of degree four and six);
     \item $m=5$ and $y=2$ (quartic forms in five variables).
\end{itemize}
The dimension count in the proof of Theorem~\ref{theorem: m_y no Wronskian} could actually be sharpened slightly. 
For instance, we over-count dimensions because we do not exploit certain scaling symmetries in the map $\Theta_1$. There may be other opportunities to refine this argument to sharpen the result in Theorem~\ref{theorem: m_y no Wronskian}.

To make further progress, a deeper understanding of the properties and, in particular, the zeros of hyperwrons and hyperzouts, is required. Our more refined results in Section~\ref{section: An example non-negative quartic homogeneous polynomial that is not hyperwron}, for example, show that every degree four hyperwron decomposes as a product of two sums of squares upon restriction to a suitable codimension one subspace. This allows us to construct an example of a quartic form that is not a hyperwron. A natural approach to showing that there exist quartic forms that are not hyperzouts would be to seek analogous properties that hold for all quartic hyperzouts.

In this work, we have made some progress in understanding the relationship between polynomials with certain hyperbolic certificates of non-negativity and the full cone of non-negative polynomials. It is natural to attempt to understand the relationships between hyperwrons (or hyperzouts) and other families of non-negative polynomials, such as sums of non-negative circuit polynomials~\cite{Iliman2016LowerProgramming,Dressler2017APolynomials}. For instance, Blekherman et al.~\cite[Theorem 6.3]{Blekherman2023LinearContainment} present a quartic homogeneous polynomial that is both a hyperwron and a sum of non-negative circuit polynomials but is not a sum of squares. In the spirit of the present paper, one could ask whether all sums of non-negative circuit polynomials are (sums of) hyperwrons.

\subsection{Extension to non-negative polynomials from interlacers}
Let $p\in \textup{Hyp}_{n,d}(e)$ be hyperbolic with respect to $e$. We say that $q\in F_{n,d-1}$ \emph{interlaces $p$ with respect to $e$} if the roots of the univariate polynomials $t\mapsto p(te-x)$ and $t\mapsto q(te-x)$ interlace for all $x\in \R^n$. More explicitly, this means that if $\lambda_1(x)\leq \cdots \leq \lambda_d(x)$ are the roots of $p(te-x)$ and $\mu_1(x),\mu_2(x),\cdots, \mu_{d-1}(x)$ are the roots of $q(te-x)$ then 
\[\lambda_1(x) \leq \mu_1(x) \leq \lambda_2(x) \leq \cdots \leq \lambda_{d-1}(x) \leq \mu_{d-1}(x) \leq \lambda_d(x).\] 
If $q$ interlaces $p$ with respect to $e$ then $q$ is necessarily hyperbolic with respect to $e$.

It is known (see~\cite[Theorem 2.1]{Kummer2012HyperbolicSquares}) that if $q$ interlaces $p$ with respect to $e$ then $D_ep(x)q(x) - D_eq(x)p(x) \geq 0$ for all $x\in \R^n$. As such, for a fixed $p\in \textup{Hyp}_{n,d}(e)$, we can generate non-negative polynomials by taking any $q$ that interlaces $p$ with respect to $e$, and any polynomial map $\phi$, and considering polynomials of the form 
\begin{equation}
    \label{eq:interlace-cert}
D_ep(\phi(x))q(\phi(x)) - D_ep(\phi(x))p(\phi(x)).
\end{equation}
It is clear that if $p$ has degree two then $D_ep\,q - D_eq\, p$ is a sum of squares, since it has degree two and is non-negative. Therefore, in the case $d=2$, any expression of the form $D_ep(\phi(x))q(\phi(x)) - D_ep(\phi(x))p(\phi(x))$ (where $q$ interlaces $p$ with respect to $e$) is a sum of squares.

One could then consider whether every non-negative polynomial can be expressed in the form~\eqref{eq:interlace-cert}, for some interlacing pair $p$ and $q$ and polynomial map $\phi$. The argument in Theorem~\ref{theorem: Dimension bound of Wronskian from hyperbolic polynomial} directly extends to this setting. Indeed one can show that under the same assumptions on the number of variables $m$ and the degree $2y$ as in Theorem~\ref{theorem: m_y no Wronskian}, there are non-negative polynomials $f\in P_{m,2y}$ that can not be expressed in the form~\eqref{eq:interlace-cert} where $p$ and $q$ are an interlacing pair and $\phi$ is a polynomial map.

\section*{Acknowledgements} 
The authors were supported in part by an Australian Research Council Discovery Early Career Researcher Award (project number DE210101056) funded by the Australian Government. 
\bibliographystyle{plain}
\bibliography{references.bib}

% \appendix
% \input{appendix/all_appendix}
\end{document}